\definecolor{blue(munsell)}{rgb}{0.0, 0.5, 0.69}
\DeclareDocumentCommand\issue{g}{\todo[size=\footnotesize,color = green!40]{Issue\IfNoValueF{#1}{: #1}}}
\DeclareDocumentCommand\tobedone{g}{\todo[size=\footnotesize,color = yellow!50]{To do\IfNoValueF{#1}{: #1}}}
\DeclareDocumentCommand\notationissue{g}{\todo[size=\footnotesize,color = red!30]{Notation?\IfNoValueF{#1}{: #1}}}
\DeclareDocumentCommand\doubt{g}{\todo[size=\footnotesize,color = blue!10]{Doubt\IfNoValueF{#1}{: #1}}}
\DeclareDocumentCommand\observation{g}{\todo[size=\footnotesize,color = orange!10]{Observation\IfNoValueF{#1}{: #1}}}
\g@addto@macro\bfseries{\boldmath}
\theoremstyle{plain}
\newtheorem{thm}{Theorem}[section]
\newtheorem*{thm*}{Theorem}
\newtheorem{prop}[thm]{Proposition}
\newtheorem{lem}[thm]{Lemma}
\newtheorem{cor}[thm]{Corollary}
\theoremstyle{definition}
\newtheorem{defn}[thm]{Definition}
\theoremstyle{remark}
\newtheorem{rem}[thm]{Remark}
\newtheorem{exa}[thm]{Example}
\newcommand\Ind{\operatorname{Ind}}
\newcommand\catpresl{\kappa-\ensuremath{\mathsf{LP}}}
\newcommand\Mod{\operatorname{Mod}}
\newcommand\Alg{\operatorname{Alg}}
\newcommand\Lan{\operatorname{Lan}}
\newcommand\opp{\operatorname{op}}
\newcommand\id{\operatorname{id}}
\newcommand\Id{\operatorname{Id}}
\newcommand\coeq{\operatorname{Coeq}}
\newcommand\Set{\ensuremath{\mathsf{Set}}}
\newcommand\Ab{\ensuremath{\mathsf{Ab}}}
\newcommand\Lex{\ensuremath{\mathsf{Lex}}}
\newcommand\Rex{\ensuremath{\mathsf{Rex}}}
\newcommand\Pres{\operatorname{Pres}}
\newcommand\Prototopoi{\ensuremath{\mathsf{Prototopoi}}}
\newcommand\Grtopoi{\ensuremath{\mathsf{GrTopoi}}}
\newcommand{\Sites}{\ensuremath{\mathsf{Sites}}}
\newcommand{\RegCard}{\mathrm{RegCard}}
\newcommand\op{\circ}
\newcommand\coop{\mathrm{coop}}
\newcommand\Sh{\operatorname{Sh}}
\newcommand\ca{\mathcal {A}}
\newcommand\cb{\mathcal {B}}
\newcommand\cc{\mathcal {C}}
\newcommand\cg{\mathcal {G}}
\newcommand\cs{\mathcal {S}}
\newcommand\cv{\mathcal {V}}
\newcommand{\AAA}{\ensuremath{\mathsf{A}}}
\newcommand{\BBB}{\ensuremath{\mathsf{B}}}
\newcommand{\CCC}{\ensuremath{\mathsf{C}}}
\newcommand{\DDD}{\ensuremath{\mathsf{D}}}
\newcommand{\III}{\ensuremath{\mathsf{I}}}
\title{Gabriel-Ulmer duality for topoi and its relation with site presentations}
\author{Ivan {Di Liberti}} 
\thanks{The first named author is supported by grants GA17-27844S and MUNI/A/1103/2017}
\author{Julia {Ramos Gonz\'alez}} 
\thanks{The second named author is a Postdoctoral Fellow of the Research Foundation - Flanders (FWO) and acknowledges the support of the Research Foundation - Flanders (FWO) under grant No.G.0D86.16N during the first months of working on this paper.}
\address{
Ivan \textsc{Di Liberti}: \newline
Department of Mathematics and Statistics\newline
Masaryk University, Faculty of Sciences\newline
Kotl\'{a}\v{r}sk\'{a} 2, 611 37 Brno, Czech Republic\newline
\href{mailto:diliberti@math.muni.cz}{\sf diliberti@math.muni.cz}
}
\address{
	Julia \textsc{Ramos Gonz\'alez}: \newline
	Department of Mathematics and Computer Science\newline
	University of Antwerp, Faculty of Sciences\newline
	Middelheimcampus, Middelheimlaan 1, 2020 Antwerp, Belgium\newline
	\href{mailto:julia.ramosgonzalez@uantwerpen.be}{\sf julia.ramosgonzalez@uantwerpen.be}
}
\begin{document}
	\tikzcdset{arrow style=tikz, diagrams={>=to}}
	\begin{abstract} 	
		Let $\kappa$ be a regular cardinal. We study Gabriel-Ulmer duality when one restricts the $2$-category of locally $\kappa$-presentable categories with $\kappa$-accessible right adjoints to its locally full sub-$2$-category of $\kappa$-presentable Grothendieck topoi with geometric $\kappa$-accessible morphisms. In particular, we provide a full understanding of the locally full sub-$2$-category of the $2$-category of $\kappa$-small cocomplete categories with $\kappa$-colimit preserving functors arising as the corresponding $2$-category of presentations via the restriction. We analyse the relation of these presentations of Grothendieck topoi with site presentations and we show that the $2$-category of locally $\kappa$-presentable Grothendieck topoi with geometric $\kappa$-accessible morphisms is a reflective sub-bicategory of the full sub-$2$-category of the $2$-category of sites with morphisms of sites genearated by the weakly $\kappa$-ary sites in the sense of \cite{shulman12}.
	\end{abstract}
	\maketitle

	\tableofcontents
	\section{Introduction}
	The notion of topos was introduced by Grothendieck in the 1960s as a category of sheaves on a site, one of its most well-known incarnations. Since then, Grothendieck topoi have become an essential object of study in many areas of modern mathematics, ranging from algebraic geometry to intuistionistic logic.  
	
	It is well-known that different sites can give rise to the same Grothendieck topos and that given a Grothendieck topos there is no canonical choice of small site to present it, as it was already pointed out in \cite{SGA4-1}. This flexibility of choice makes of topos theory an extremely powerful mathematical tool. Caramello's program ``Toposes as bridges'' \cite{caramello18} is an illustrative example, where Grothendieck topoi presented by sites of different mathematical nature are a key point to connect various a priori unrelated mathematical theories. This flexibility comes however at a price for the daily practice of the working topos theorist. Solving problems naturally living in the $2$-category $\Grtopoi$ of Grothendieck topoi by choosing concrete sheaf presentations allows to work in the more tangible $2$-category $\Sites$ of small sites, but then every construction has to be checked to be independent of the underlying sites chosen. 
	
	Ideally one would like to have a way of presenting Grothendieck topoi by nice small categories functorially, avoiding having to check independence of presentations. One possible approach is zooming out and working with a bigger family of categories for which more rigidity in the choice of presentations (of whichever nature) is available. A classical candidate is the class of locally presentable categories: it is well-known that Grothendieck topoi are indeed locally presentable categories (see for example \cite[Prop 3.4.16]{borceux94-HCA3}) and Gabriel-Ulmer duality \cite[Kor 7.11]{gabrielulmer71} states that locally $\kappa$-presentable categories are completely determined by their small subcategory of $\kappa$-presentable objects, providing a natural presentation of locally $\kappa$-presentable categories in terms of $\kappa$-cocomplete small categories.
	
	Given a regular cardinal $\kappa$, the main goal of this paper is to restrict the $2$-categorical generalization of the classical Gabriel-Ulmer duality to a duality for locally $\kappa$-presentable Grothendieck topoi with $\kappa$-accessible geometric functors in order to understand which $\kappa$-cocomplete categories appear as their natural presentations, and eventually interpret those in terms of classical site presentations.
	
	The structure of the article is as follows. 
	
	In \S\ref{GUduality} we review the $2$-categorical version of Gabriel-Ulmer duality from \cite[Th. 3.1]{centazzovitale02} applied to locally $\kappa$-presentable categories. 
	
	In \S\ref{sectioncharacterization} we focus on the restriction of Gabriel-Ulmer duality to Grothendieck topoi at the level of objects (or $0$-cells), namely we characterize the locally $\kappa$-presentable categories which are Grothendieck topoi in terms of their subcategories of $\kappa$-presentable objects. Historically, this question was initially posed for the case of locally finitely presentable categories in \cite[footnote p. $106$]{SGA4-1} (that is $\kappa = \aleph_0$). The answer appeared for the first time to our knowledge in \cite{carbonipedicchiorosicky01}, providing a characterization of the small categories $\CCC$ with finite colimits whose $\Ind$-completion $\Ind(\CCC)$ (that is the free directed colimit completion of $\CCC$) is a Grothendieck topos. We provide, based on \cite{carbonipedicchiorosicky01}, a generalization of this characterization to higher cardinalities:
	\begin{thm}[{\Cref{characterizationkappa}}]\label{characterizationgeneralization}
		Let $\kappa$ be regular cardinal and let $\CCC$ be a small category closed under $\kappa$-small colimits. The following are equivalent:
		\begin{enumerate}[label=\rm (\arabic*)]
			\item $\CCC$ is $\kappa$-extensive (\Cref{defextensive}) and pro-exact (\Cref{defproexact});
			\item $\Ind_{\kappa}(\CCC)$ is a Grothendieck topos;
		\end{enumerate}
		where $\Ind_{\kappa}$ denotes the free $\kappa$-directed colimit completion, usually referred to as $\Ind_{\kappa}$-completion. A locally $\kappa$-presentable category $\ca$ is hence a Grothendieck topos if and only if its full category of $\kappa$-presentable objects $\Pres_{\kappa}\ca$ is $\kappa$-extensive and pro-exact.
	\end{thm}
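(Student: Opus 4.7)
The plan is to combine Gabriel--Ulmer duality with Giraud's axiomatic characterization of Grothendieck topoi. Since $\Ind_\kappa(\CCC)$ is locally $\kappa$-presentable by construction, it is automatically cocomplete, complete, locally small, and admits a small generator (the image of $\CCC$ under the Yoneda embedding). By Giraud's theorem, being a Grothendieck topos then reduces to infinitary extensivity plus Barr-exactness. I therefore aim to prove that these two conditions on $\Ind_\kappa(\CCC)$ translate precisely, under the identification $\CCC\simeq \Pres_\kappa(\Ind_\kappa(\CCC))$ supplied by Gabriel--Ulmer, into $\kappa$-extensivity and pro-exactness of $\CCC$. The bridge is the canonical fully faithful, $\kappa$-small-colimit-preserving embedding $\CCC\hookrightarrow \Ind_\kappa(\CCC)$, together with the facts that every object of $\Ind_\kappa(\CCC)$ is a canonical $\kappa$-filtered colimit of objects of $\CCC$ and that $\kappa$-filtered colimits in $\Ind_\kappa(\CCC)$ commute with $\kappa$-small (in particular finite) limits. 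Once this dictionary is set up, the final assertion about an arbitrary locally $\kappa$-presentable $\mathcal{A}$ is immediate, since $\mathcal{A}\simeq \Ind_\kappa(\Pres_\kappa\mathcal{A})$.

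\textbf{Extensivity.} Since $\CCC$ is closed under $\kappa$-small colimits in $\Ind_\kappa(\CCC)$, $\kappa$-small coproducts in the two categories agree, and the easy direction (topos implies $\kappa$-extensive) is immediate. For the converse I would express any coproduct $\coprod_{i\in I} X_i$ in $\Ind_\kappa(\CCC)$ as the $\kappa$-filtered colimit $\colim_{F\subseteq I,\,|F|<\kappa}\coprod_{i\in F} X_i$, each term of which lies in $\CCC$ and is disjoint and universal by $\kappa$-extensivity. The commutation of $\kappa$-filtered colimits with finite limits (pullbacks of summands, diagonal pullbacks for disjointness) then upgrades these local properties to infinitary extensivity of $\Ind_\kappa(\CCC)$.

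\textbf{Exactness and main obstacle.} The harder part concerns Barr-exactness, since coequalizers in $\CCC$ need not be pullback-stable even when their images in $\Ind_\kappa(\CCC)$ furnish effective quotients. This is precisely the role of pro-exactness (\Cref{defproexact}): it should be formulated so that every equivalence relation in $\CCC$ admits a canonical $\kappa$-filtered resolution whose colimit in $\Ind_\kappa(\CCC)$ is an effective quotient. Assuming pro-exactness, I would show that any equivalence relation in $\Ind_\kappa(\CCC)$ is a $\kappa$-filtered colimit of equivalence relations between $\kappa$-presentable objects, each effective after $\Ind_\kappa$-completion; the commutation of $\kappa$-filtered colimits with $\kappa$-small limits then yields effectivity of the original relation. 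Conversely, assuming Barr-exactness of $\Ind_\kappa(\CCC)$, the required pro-resolutions can be read off from the canonical $\kappa$-filtered presentations of the effective quotients in the topos. The main technical obstacle will be exactly this translation, because coequalizers behave badly under $\kappa$-filtered colimits in general; the precise design of pro-exactness should be what makes the interplay between equivalence relations inside $\CCC$ and their effective realization after $\Ind_\kappa$-completion tractable, and mirrors how extensivity of $\CCC$ at level $\kappa$ was enough to deliver infinitary extensivity of $\Ind_\kappa(\CCC)$.
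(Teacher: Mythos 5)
Your overall frame (reduce to Giraud via \Cref{Giraud} and transfer extensivity and exactness across the embedding $\CCC\hookrightarrow\Ind_{\kappa}(\CCC)$) is the same as the paper's, but the proposal stops short of the actual content at the two places where the theorem is hard. First, Barr-exactness is regularity \emph{plus} effectivity of equivalence relations, and you never address why $\Ind_{\kappa}(\CCC)$ is regular: in the paper this is \Cref{regularweaklyregular}, which goes through the auxiliary notion of \emph{weak regularity} of $\CCC$ (built into the hypotheses of \Cref{defproexact}) and rests on Makkai's theorem (\Cref{Makkai}) identifying $\Pres_{\kappa}([\III,\ca])$ with $[\III,\Pres_{\kappa}(\ca)]$, so that the regular epimorphisms of $\Ind_{\kappa}(\CCC)$ form a locally $\kappa$-presentable category presented by those of $\CCC$. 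Second, for effectivity you write that pro-exactness ``should be formulated so that'' every equivalence relation resolves suitably; but you never engage with what \Cref{defproexact} actually says (factorization of graphs through regular-epi quotients of \emph{iterations} of reflexive symmetric graphs), and that combinatorics is precisely the point: the restriction of an equivalence relation in $\Ind_{\kappa}(\CCC)$ to $\kappa$-presentables is in general only a reflexive symmetric graph, the equivalence relation it generates is a $\kappa$-filtered colimit of iterations, and your converse direction (``the pro-resolutions can be read off from the effective quotients'') cannot be waved through --- exactness of a category does \emph{not} imply pro-exactness (\Cref{confused}), so the implication really uses the $\kappa$-filtered-colimit structure of $\Ind_{\kappa}(\CCC)$ in an essential way, as in \Cref{exact}.

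There is also a concrete slip in your extensivity argument. For $(1)\Rightarrow(2)$ you write an arbitrary coproduct $\coprod_{i\in I}X_i$ in $\Ind_{\kappa}(\CCC)$ as $\colim_{F\subseteq I,\,|F|<\kappa}\coprod_{i\in F}X_i$ and claim each term lies in $\CCC$; that is only true when every $X_i$ is ($\kappa$-presentable, i.e.) in $\CCC$, whereas disjointness and universality must be checked against arbitrary objects and arbitrary maps $A\to\coprod_i X_i$, so the reduction needs substantially more care than sketched. The paper avoids this by a different device (\Cref{extensive}): it embeds $\Ind_{\kappa}(\CCC)=\Lex_{\kappa}(\CCC^{\op},\Set)$ into the category $\CCC^{\op}$-$\Alg$ of models of the $\kappa$-ary algebraic theory $\CCC^{\op}$, which is the sheaf topos for the $\kappa$-extensive topology and hence infinitary extensive by Giraud, and then checks that $\Ind_{\kappa}(\CCC)$ is closed therein under small limits and arbitrary sums, so that infinitary extensivity is inherited (\Cref{subcategoriesextensive}). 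The converse direction $(2)\Rightarrow(1)$, which you call immediate, does require the Yoneda-style argument of the paper (limits are created by Yoneda, $\kappa$-presentables are the representables, summands of $\kappa$-presentables are $\kappa$-presentable), though this is indeed the easier half. As it stands, then, the proposal is a correct plan of attack but has genuine gaps at regularity, at both directions of the pro-exactness/exactness equivalence, and in the infinitary extensivity step.
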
 
	We say that a small category $\CCC$ with $\kappa$-small colimits is a \emph{$\kappa$-prototopos} (see \Cref{prototopos}) if it satisfies one (and hence both) of the two equivalent properties of the theorem.
	
	In \S\ref{dualitysection} we focus on the restriction of Gabriel-Ulmer duality at the level of $1$-cells. While the $1$-cells considered in Gabriel-Ulmer duality for $\kappa$-presentable categories are the $\kappa$-accessible right adjoints, classically, the $1$-cells usually considered when working with Grothendieck topoi are the geometric morphisms. Therefore, we restrict the duality at the level of $1$-cells to $\kappa$-accessible right adjoints which are part of a geometric morphism. This allows us to present the desired restriction of the result of Gabriel and Ulmer:
	\begin{thm}[{{\Cref{1}}}]\label{ourduality}
		Let $\kappa$ be a regular cardinal.
		There is a biequivalence of $2$-categories $$\Ind_{\kappa}: \Prototopoi_\kappa^{\coop} \rightleftarrows  \Grtopoi_{\kappa} : \Pres_{\kappa},$$
		where $\Prototopoi_\kappa$ is the $2$-category of $\kappa$-prototopoi with $\kappa$-small colimit preserving functors which are $\Ind_{\kappa}$-flat (\Cref{indkappaflat}).
	\end{thm}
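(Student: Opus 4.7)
The plan is to obtain \Cref{ourduality} as a restriction of the 2-categorical Gabriel-Ulmer biequivalence recalled in Section \ref{GUduality}, so the main work lies in identifying the appropriate locally full sub-2-categories on both sides and checking that the correspondence restricts at the level of 0-cells, 1-cells and 2-cells.

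At the level of 0-cells, the correspondence is already handled by \Cref{characterizationgeneralization} (together with the definition of $\kappa$-prototopos): a locally $\kappa$-presentable category $\ca$ is a Grothendieck topos if and only if $\Pres_\kappa \ca$ is a $\kappa$-prototopos, and conversely $\Ind_\kappa(\CCC)$ is a Grothendieck topos precisely when $\CCC$ is a $\kappa$-prototopos. Thus the object assignment of the classical GU biequivalence restricts to a bijection between the objects of $\Grtopoi_\kappa$ and those of $\Prototopoi_\kappa$.

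The core of the argument is at the level of 1-cells. Recall that under classical Gabriel-Ulmer duality a $\kappa$-accessible right adjoint $G\colon \ca \to \cb$ with left adjoint $F\colon \cb \to \ca$ corresponds to the $\kappa$-cocontinuous functor $F|_{\Pres_\kappa \cb}\colon \Pres_\kappa \cb \to \Pres_\kappa \ca$, and the inverse sends a $\kappa$-cocontinuous functor between small $\kappa$-cocomplete categories to its $\Ind_\kappa$-completion, so that $F = \Ind_\kappa(F|_{\Pres_\kappa \cb})$ (up to canonical isomorphism). A $\kappa$-accessible right adjoint is the right adjoint part of a geometric morphism exactly when its left adjoint $F$ preserves finite limits, and via the identification $F \simeq \Ind_\kappa(F|_{\Pres_\kappa \cb})$ this translates precisely to requiring that $\Ind_\kappa(F|_{\Pres_\kappa \cb})$ preserves finite limits, which is the defining property of $\Ind_\kappa$-flatness (\Cref{indkappaflat}). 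This matches the 1-cells of $\Grtopoi_\kappa$ with the $\Ind_\kappa$-flat $\kappa$-cocontinuous functors, with the variance reversal accounted for by the $\coop$ on the $\Prototopoi_\kappa$ side.

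The 2-cells on both sides are just natural transformations, and one checks that natural transformations between $\kappa$-accessible geometric morphisms correspond bijectively (via whiskering with the $\kappa$-presentable inclusions and via $\Ind_\kappa$) to natural transformations between their associated $\Ind_\kappa$-flat functors, so no extra verification is needed beyond that already performed in the unrestricted GU biequivalence. The main obstacle, and the real content that must be checked to glue these ingredients together, is to confirm that the class of $\Ind_\kappa$-flat $\kappa$-cocontinuous functors is closed under composition and identities (so that $\Prototopoi_\kappa$ is a genuine 2-category and the restriction of $\Pres_\kappa$ lands there), and that it agrees exactly with the class of $\kappa$-cocontinuous functors whose $\Ind_\kappa$-completion preserves finite limits. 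Once this is in place, the biequivalence follows formally by restricting the GU biequivalence to these matched locally full sub-2-categories.
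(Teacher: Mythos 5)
Your proposal is correct and takes essentially the same route as the paper: \Cref{1} is obtained by restricting Gabriel--Ulmer duality (\Cref{GU}) at the level of $0$-cells via \Cref{characterizationkappa} and at the level of $1$-cells by observing that $\Ind_{\kappa}$-flatness of $f$ is, by definition (\Cref{indkappaflat}), exactly the condition that the left adjoint $S(f)$ of $\Ind_{\kappa}(f)$ be left exact, i.e.\ that $\Ind_{\kappa}(f)$ be a geometric morphism. The point you single out as the main remaining obstacle --- closure of $\Ind_{\kappa}$-flat $\kappa$-cocontinuous functors under identities and composition --- is immediate from the pseudofunctoriality of $\Ind_{\kappa}$ (since $S(gf)\cong S(g)\,S(f)$ and composites of left exact functors are left exact), so it needs no separate verification.
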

	
	We then analyse in \S\ref{cofinality} up to what point the understanding of the family of $2$-categories $\{\Grtopoi_{\kappa}\}_{\kappa \in \RegCard}$ (where $\RegCard$ denotes the set of small regular cardinals) provided by this topoi-prototopoi duality can help us comprehend the $2$-category $\Grtopoi$ of Grothendieck topoi with geometric morphisms and natural transformations, which is the $2$-category of main interest from a topos theory perspective. 
	
	Finally, in \S\ref{sitesandprototopoi}, we study the relation between the prototopos presentations of Grothendieck topoi and the classical site presentations. More concretely, we analyse the connection between the full sub-$2$-category $\Sites_{\kappa}$ of the $2$-category $\Sites$ of small sites with morphisms of sites generated by the weakly $\kappa$-ary sites in the sense of \cite{shulman12} (see \Cref{defnsize}) and the $2$-category $\Prototopoi_{\kappa}$. This allows us to prove the following result:
	\begin{thm}[{{\Cref{semantic}}}]\label{mainresult}
		The $2$-category $\Grtopoi_{\kappa}$ is a reflective sub-bicategory in the $2$-category $\Sites_{\kappa}^{\coop}$, where the left adjoint to the reflection is provided by taking sheaves.
	\end{thm}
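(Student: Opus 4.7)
The plan is to produce a biadjunction
\[
\Sh : \Sites_{\kappa}^{\coop} \rightleftarrows \Grtopoi_{\kappa} : \iota,
\]
with $\Sh$ the sheafification $2$-functor and $\iota$ the canonical-site embedding $\mathcal{E} \mapsto (\Pres_{\kappa}(\mathcal{E}), J_{\mathrm{can}})$, where $J_{\mathrm{can}}$ denotes the restriction to $\Pres_{\kappa}(\mathcal{E})$ of the canonical topology of $\mathcal{E}$. The verification would factor through the Gabriel--Ulmer-style duality of \Cref{ourduality}, translating morphisms of sites in $\Sites_{\kappa}$ into $\Ind_{\kappa}$-flat $\kappa$-cocontinuous functors between $\kappa$-prototopoi, which in turn parametrize $\kappa$-accessible geometric morphisms between topoi.

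First I would check well-definedness of the two $2$-functors. That $\Sh(C,J) \in \Grtopoi_{\kappa}$ for $(C,J) \in \Sites_{\kappa}$ is exactly Shulman's theorem for weakly $\kappa$-ary sites. That $\iota(\mathcal{E}) \in \Sites_{\kappa}$ uses \Cref{characterizationgeneralization} to see that $\Pres_{\kappa}(\mathcal{E})$ is a $\kappa$-prototopos, together with a direct check that $(\Pres_{\kappa}(\mathcal{E}), J_{\mathrm{can}})$ is weakly $\kappa$-ary. On $1$-cells, a $\kappa$-accessible geometric morphism $\mathcal{F} \to \mathcal{E}$ restricts on $\kappa$-presentables to a morphism between the corresponding canonical sites, while a morphism of sites induces a geometric morphism on sheaf topoi in the usual way; the weakly $\kappa$-ary assumption ensures $\kappa$-accessibility.

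Next I would construct the unit and the counit. The counit $\Sh(\Pres_{\kappa}(\mathcal{E}), J_{\mathrm{can}}) \simeq \mathcal{E}$ follows from the fact that $\Pres_{\kappa}(\mathcal{E})$ is a dense generating subcategory of $\mathcal{E}$ and that $J_{\mathrm{can}}$ is the restriction of the canonical topology, so that representables are already sheaves and generate the topos. The unit at $(C,J)$ is the sheafified Yoneda $(C,J) \to (\Pres_{\kappa}(\Sh(C,J)), J_{\mathrm{can}})$, which is a morphism of sites because representables become $\kappa$-presentable sheaves (by Shulman) and Yoneda is flat and cover-preserving. The universal property is checked by combining a Diaconescu-style argument with \Cref{ourduality}: any morphism of sites $(\Pres_{\kappa}(\mathcal{E}), J_{\mathrm{can}}) \to (C,J)$ composes with sheafified Yoneda into an $\Ind_{\kappa}$-flat $\kappa$-cocontinuous functor $\Pres_{\kappa}(\mathcal{E}) \to \Pres_{\kappa}(\Sh(C,J))$, and by the duality this data is precisely a $\kappa$-accessible geometric morphism $\Sh(C,J) \to \mathcal{E}$, determined uniquely up to coherent $2$-cell.

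The main obstacle I foresee is the precise matching of flatness conditions across the correspondence: morphisms of sites are flat and cover-preserving, whereas morphisms of $\kappa$-prototopoi are $\Ind_{\kappa}$-flat and $\kappa$-cocontinuous. Demonstrating that these notions correspond under the passage from $C$ to its $\kappa$-ary completion inside $\Pres_{\kappa}(\Sh(C,J))$, and that the correspondence is compatible at the level of $2$-cells, is where the weakly $\kappa$-ary hypothesis built into $\Sites_{\kappa}$ does its real work; this is the step I expect to absorb most of the technical effort.
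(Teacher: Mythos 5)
Your proposal follows essentially the same route as the paper: the right biadjoint is the canonical weakly $\kappa$-ary site on $\Pres_{\kappa}(\mathcal{E})$, the unit is the sheafified Yoneda functor, the counit is the comparison-lemma equivalence, and the identification of cover-preserving covering-flat site morphisms between these canonical sites with $\Ind_{\kappa}$-flat $\kappa$-cocontinuous prototopos morphisms (fed into the prototopoi--topoi duality) carries the technical weight, exactly as you anticipate. The only cosmetic differences are that the paper proves local $\kappa$-presentability of $\Sh(\CCC,\tau)$ for a weakly $\kappa$-ary site itself, via a limit-sketch argument, rather than citing Shulman, and it verifies the biadjunction by exhibiting the unit, counit and (strict) triangle identities rather than by a Diaconescu-style equivalence of hom-categories.
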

	This result allows us to choose canonical small site presentations while working within $\Grtopoi_{\kappa}$, a possibility which is not available when one works in $\Grtopoi$. 
	
	As an immediate corollary of \Cref{mainresult}, we also obtain the desired relation between $\kappa$-prototopoi and sites defining locally $\kappa$-presentable topoi as follows:
	\begin{thm}[{{\Cref{syntactic}}}]
		The $2$-category $\Prototopoi_{\kappa}$ is a reflective bicategory in the $2$-category $\Sites_{\kappa}$.
	\end{thm}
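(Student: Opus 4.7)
The plan is to deduce this from \Cref{ourduality} and \Cref{mainresult} by exploiting the fact that the 2-functor $(-)^{\coop}$ preserves biadjunctions with the same left/right assignment: reversing 1-cells swaps left and right, reversing 2-cells swaps unit and counit, and the two swaps cancel out.

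First, applying $(-)^{\coop}$ to the reflection of \Cref{mainresult} turns the reflective inclusion $\Grtopoi_\kappa \hookrightarrow \Sites_\kappa^{\coop}$, with left biadjoint $\Sh$, into a reflective inclusion $\Grtopoi_\kappa^{\coop} \hookrightarrow \Sites_\kappa$ whose left biadjoint is $\Sh^{\coop}$. Next, applying $(-)^{\coop}$ to the biequivalence of \Cref{ourduality} produces a biequivalence $\Grtopoi_\kappa^{\coop} \simeq \Prototopoi_\kappa$ realized by $\Pres_\kappa^{\coop}$ and $\Ind_\kappa^{\coop}$. Transporting the reflection along this biequivalence then exhibits $\Prototopoi_\kappa$ as a reflective sub-bicategory of $\Sites_\kappa$, with left biadjoint essentially the composite of $\Sh^{\coop}$ with $\Pres_\kappa^{\coop}$.

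The only substantive step is to check that the inclusion obtained by transport, namely $\Prototopoi_\kappa \xrightarrow{\sim} \Grtopoi_\kappa^{\coop} \hookrightarrow \Sites_\kappa$, coincides with the canonical inclusion regarding a $\kappa$-prototopos as a $\kappa$-ary site, used in \S\ref{sitesandprototopoi}. Concretely, one must verify that, for a $\kappa$-prototopos $\CCC$, the site structure on $\CCC$ inherited from the canonical topology of the topos $\Ind_\kappa(\CCC)$ agrees with the natural weakly $\kappa$-ary site structure on $\CCC$ featured in the definition of $\Sites_\kappa$. I expect this identification to be the main, but essentially routine, verification, and to follow from \Cref{characterizationgeneralization} together with the analysis of morphisms of sites between prototopoi developed earlier in this section.
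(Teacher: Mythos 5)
Your overall route is the same as the paper's: the corollary is obtained by transporting the bireflection of \Cref{semantic} across the biequivalence of \Cref{1}, and the identification you flag in your last step is built into the constructions of \S\ref{sitesandprototopoi} --- the right biadjoint of \Cref{semantic} is \emph{defined} as $\mathparagraph_{\kappa}=\mathrm{T}_{\kappa}\,\Pres_{\kappa}$, where $\mathrm{T}_{\kappa}$ equips a $\kappa$-prototopos $\CCC$ with the weakly $\kappa$-ary topology $\tau_{\kappa}^{\CCC}$ of \S\ref{topologicalconsiderations} and is well defined on $1$-cells by \Cref{cover flat}; so the transported inclusion is $\mathparagraph_{\kappa}\,\Ind_{\kappa}\cong\mathrm{T}_{\kappa}\,\Pres_{\kappa}\,\Ind_{\kappa}\cong\mathrm{T}_{\kappa}$ and the left biadjoint is $\wp_{\kappa}=\Pres_{\kappa}\,\mathrm{S}_{\kappa}$, exactly as in \Cref{syntactic}. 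That part of your plan is fine and indeed routine.

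The problem is the formal duality principle you invoke at the start. For bicategories one has $\cc^{\coop}(x,y)=\cc(y,x)^{\op}$, and applying $(-)^{\mathrm{co}}$ to \emph{both} sides of a biadjunction does not exchange unit and counit: a pseudonatural transformation keeps its direction under $(-)^{\mathrm{co}}$ (only its constraint $2$-cells and the triangle modifications are reversed). Hence $(-)^{\mathrm{co}}$ preserves the left/right assignment, $(-)^{\op}$ exchanges it, and therefore $(-)^{\coop}$ exchanges it as well, just as $(-)^{\op}$ does for ordinary adjunctions; the rule ``$\mathrm{co}$ swaps adjoints'' that you are half-remembering is the one for adjunctions internal to a fixed $2$-category, which is not the situation here. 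Taken literally, your first step would thus exhibit the inclusion of $\Grtopoi_{\kappa}^{\coop}$ into $\Sites_{\kappa}$ as a \emph{left} biadjoint, i.e.\ a coreflection, which is the opposite of what is needed. The way to repair your plan is not a formal cancellation but a concrete transport of the unit and counit constructed in the proof of \Cref{semantic}: the unit there already has components that are $1$-cells of $\Sites_{\kappa}$, namely the site morphisms $s_{\CCC}\colon(\CCC,\tau)\to\mathparagraph_{\kappa}\,\mathrm{S}_{\kappa}(\CCC,\tau)=\mathrm{T}_{\kappa}\,\wp_{\kappa}(\CCC,\tau)$, and the counit is the comparison equivalence $\mathrm{S}_{\kappa}\,\mathparagraph_{\kappa}(\cg)\simeq\cg$; composing the latter with the equivalence $\Pres_{\kappa}\,\Ind_{\kappa}\simeq\Id$ of \Cref{1} gives the invertible counit $\wp_{\kappa}\,\mathrm{T}_{\kappa}(\DDD)\simeq\DDD$, and the triangle identities are inherited. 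With this replacement your argument becomes the paper's proof of $\wp_{\kappa}\dashv\mathrm{T}_{\kappa}$; as written, the ``two swaps cancel out'' step is not valid and, applied correctly, would point the biadjunction the wrong way.
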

	
	We would like to point out the relation of our work with that of Shulman in \cite{shulman12}. While in this paper we focus on understanding the free $\kappa$-directed colimit completions that yield Grothendieck topoi, Shulman studies in \cite{shulman12} the free exact completions that Grothendieck topoi, and thus both papers are very much related in spirit. We also find an analog of \Cref{ourduality} in \cite[Thm 9.5]{shulman12}.
	
	In this article we have exclusively worked in a non-enriched setup, with our objects of study being classical Grothendiek topoi (enriched over $\Set$). The analogous results in an enriched setup over suitable categories $\cv$ are currently under investigation. For this purpose the authors find the work in \cite{garnerlack12} of great relevance. Our interest in the analysis of the enriched case, apart from being a desirable generalization of the current results, relies on the fact that the enriched framework could provide a better understanding of the notion of prototopos, where pro-exactness appears as an obscure property. Furthermore, we are particularly interested in the case of enrichments over the category of abelian groups $\Ab$. Grothendieck topoi enriched over $\Ab$ are precisely the Grothendieck categories, as one can easily deduce from Gabriel-Popescu theorem together with the theory of enriched sheaves from \cite{borceuxquinteiro96}. Grothendieck categories are objects of essential relevance in algebraic geometry, specially from the point of view of noncommutative algebraic geometry, where they play the role of models for noncommutative spaces (see for example \cite{artintatevandenbergh90},\cite{artinzhang94},\cite{staffordvandenbergh01}). In this setup, $\aleph_0$-prototopoi correspond to \emph{left abelian} additive categories, as shown in \cite[Satz 2.7]{breitsprecher70} (see also \cite{rump10}). We must point out that this result is actually previous to the above-mentioned characterization of $\aleph_0$-prototopoi (in the non-enriched setup) from \cite{carbonipedicchiorosicky01} and hence is not proved as an enriched version of it. Both the interpretation of this result in key of enrichments and its extension to higher cardinalities are work in progress. 

\section{Gabriel-Ulmer duality} \label{GUduality}
Gabriel-Ulmer duality is a syntax-semantics-type duality that states that locally $\kappa$-presentable categories are uniquely determined by their full subcategories of $\kappa$-presentable objects. This section is a brief exposition of this result.
\begin{thm}[Gabriel-Ulmer duality]  \label{GU} There is a biequivalence of $2$-categories
	\begin{equation}
		\Rex_{\kappa}^{\coop} \rightarrow \catpresl 
	\end{equation}
	where:
	\begin{itemize}
		\item $\Rex_{\kappa}^{\coop}$ is the conjugate-opposite $2$-category of the $2$-category $ \Rex_{\kappa}$ of small categories with $\kappa$-small colimits with $1$-cells given by the $\kappa$-small colimit preserving functors and $2$-cells given by the natural transformations;
		\item $\catpresl$ is the $2$-category of locally $\kappa$-presentable categories with $1$-cells given by $\kappa$-accessible right adjoints and $2$-cells given by natural transformations between them.
	\end{itemize}
\end{thm}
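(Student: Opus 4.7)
The plan is to exhibit explicit pseudofunctors in both directions and then verify that they are quasi-inverse. In one direction, define $\Pres_{\kappa}\colon \catpresl \to \Rex_\kappa^{\coop}$ by sending a locally $\kappa$-presentable category $\ca$ to its full subcategory of $\kappa$-presentable objects. If $R\colon\ca\to\cb$ is a $\kappa$-accessible right adjoint with left adjoint $L$, then $L$ preserves all colimits and, because $R$ is $\kappa$-accessible, $L$ restricts to a $\kappa$-cocontinuous functor $L|_{\Pres_\kappa\cb}\colon \Pres_\kappa\cb\to\Pres_\kappa\ca$, which is the $1$-cell in $\Rex_\kappa^{\coop}(\Pres_\kappa\ca,\Pres_\kappa\cb)$. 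Natural transformations are sent to their mates. In the other direction, define $\Ind_\kappa\colon\Rex_\kappa^{\coop}\to\catpresl$ on objects by the free $\kappa$-directed colimit completion, and on a $1$-cell (i.e.\ a $\kappa$-cocontinuous $F\colon\cd\to\cc$) by the right adjoint of the induced left Kan extension $\Ind_\kappa F\colon \Ind_\kappa\cd\to\Ind_\kappa\cc$; this right adjoint exists by the adjoint functor theorem for locally presentable categories since $\Ind_\kappa F$ preserves both $\kappa$-small and $\kappa$-directed colimits, hence all colimits, and it is $\kappa$-accessible because $\Ind_\kappa F$ preserves $\kappa$-presentable objects.

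For the unit/counit equivalences, I would proceed as follows. On the $\Rex_\kappa$-side, for a small $\kappa$-cocomplete $\cc$, the canonical embedding $\cc\hookrightarrow\Ind_\kappa\cc$ factors through $\Pres_\kappa\Ind_\kappa\cc$, and the standard fact that the $\kappa$-presentable objects of $\Ind_\kappa\cc$ are precisely the retracts (equivalently, since $\cc$ has $\kappa$-small colimits, the objects up to isomorphism) of objects coming from $\cc$ gives the equivalence $\cc\simeq \Pres_\kappa\Ind_\kappa\cc$. On the $\catpresl$-side, the representation theorem for locally $\kappa$-presentable categories provides the equivalence $\ca\simeq\Ind_\kappa\Pres_\kappa\ca$, obtained by noting that every object of $\ca$ is a canonical $\kappa$-directed colimit of $\kappa$-presentable objects. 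Both equivalences are then shown to be pseudonatural in the respective variable.

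For full faithfulness on hom-categories, the key observation is the universal property of the $\Ind_\kappa$-completion: $\kappa$-directed-colimit-preserving functors $\Ind_\kappa\cc\to\ce$ (for $\ce$ with $\kappa$-directed colimits) correspond bijectively, up to natural isomorphism, to functors $\cc\to\ce$. Applied to our setting, this says that any $\kappa$-accessible right adjoint $R\colon\Ind_\kappa\cc\to\Ind_\kappa\cd$ is determined up to isomorphism by the restriction to $\Pres_\kappa$ of its left adjoint $L$, and that natural transformations between such right adjoints correspond via mates to natural transformations between the corresponding restrictions. Combining this with the previous paragraph yields the equivalence of hom-categories $\catpresl(\ca,\cb)\simeq \Rex_\kappa^{\coop}(\Pres_\kappa\ca,\Pres_\kappa\cb)$.

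The most delicate step is the verification that the assignment $F\mapsto R_F$ (right adjoint of $\Ind_\kappa F$) is functorial on $2$-cells and compatible with composition up to coherent isomorphism: this requires handling the mate calculus with care, in particular checking that horizontal and vertical composition of natural transformations on the $\Rex_\kappa$-side are sent to the corresponding compositions on the $\catpresl$-side. This is essentially a formal computation using the triangle identities of the adjunctions $\Ind_\kappa F\dashv R_F$, but it is the principal technical obstacle, since everything else reduces to the universal property of $\Ind_\kappa$ and the standard presentation theorem for locally $\kappa$-presentable categories.
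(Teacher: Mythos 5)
Your outline is essentially correct, and it produces the same pair of quasi-inverse pseudofunctors $\Ind_{\kappa}$ and $\Pres_{\kappa}$ that the paper works with; note, however, that the paper does not reprove \Cref{GU} at all --- it cites \cite[Kor 7.11]{gabrielulmer71} and \cite[Th 3.1]{centazzovitale02} and merely records the explicit action of the two pseudofunctors, so the only meaningful comparison is with that description. There the order of construction differs from yours: for a $\kappa$-small colimit preserving $f\colon \CCC\to\DDD$ the paper first defines the \emph{right} adjoint $Z(f)$ as the restriction of $f^*\colon[\DDD^{\op},\Set]\to[\CCC^{\op},\Set]$ to $\Ind_{\kappa}(\DDD)\cong\Lex_{\kappa}(\DDD^{\op},\Set)$, observes it preserves all limits (limits in $\Ind_{\kappa}$ being computed as in presheaves), obtains $S(f)$ by the adjoint functor theorem, and only then deduces $\kappa$-accessibility of $Z(f)$ from the fact that $S(f)$ extends $f$ and hence preserves $\kappa$-presentables. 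You instead build the left adjoint first, as the Kan extension $\Lan_{\alpha_{\CCC}}(\alpha_{\DDD}f)$, and invoke the adjoint functor theorem for its right adjoint; the cost is that you must know $\Ind_{\kappa}(f)$ preserves $\kappa$-small colimits (so that, together with preservation of $\kappa$-directed colimits, it is cocontinuous), which you assert but do not justify --- it is true, but it needs either the interchange lemma expressing $\kappa$-small colimits in $\Ind_{\kappa}(\CCC)$ as $\kappa$-directed colimits of $\kappa$-small colimits taken in $\CCC$, or the $\Lex_{\kappa}(\CCC^{\op},\Set)$ description, and it is precisely the step the paper's order of construction renders automatic. Your route has the advantage of making the restriction/extension correspondence, and hence the equivalence of hom-categories via the $2$-dimensional universal property of the $\Ind_{\kappa}$-completion, completely transparent; the paper's route gets the adjointness and accessibility bookkeeping for free and sets up the identity $S(f)\cong\Lan_{\alpha_{\CCC}}(\alpha_{\DDD}f)$ (\Cref{Sfstructure}) that is exploited later, e.g.\ in \Cref{flatnesscomparison}. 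The remaining ingredients you use --- $\CCC\simeq\Pres_{\kappa}\Ind_{\kappa}(\CCC)$ by closure of $\CCC$ under retracts, $\ca\simeq\Ind_{\kappa}\Pres_{\kappa}(\ca)$ by \cite[Thm 1.46]{adamekrosicky94}, accessibility of the right adjoint via preservation of $\kappa$-presentables by the left adjoint, and the mate calculus for the $2$-cells --- are the standard ones, so modulo the cocontinuity point and the (acknowledged) coherence verifications your sketch is a faithful proof of the cited theorem.
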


This duality dates back to \cite[Kor 7.11]{gabrielulmer71} (at the level of objects). Since then, the duality has been extended to a $2$-categorical version and different generalizations have appeared in the literature, among those, a generalization to more general limit doctrines can be found in \cite[Th 3.1]{centazzovitale02}, a proof of the duality in the enriched context can be found in \cite{lackpower09}, while in \cite[Th 4.12]{dilibertiloregian18} the duality is analysed from the point of view of formal category theory.

The biequivalence of the theorem is given by the following two quasi-inverse pseudofunctors
\begin{equation}
	\Ind_{\kappa}: \Rex_{\kappa}^{\coop} \rightleftarrows \catpresl :\Pres_{\kappa}. 
\end{equation}
The pseudofunctor $\Ind_{\kappa}$ acts as follows: 
\begin{itemize}
	[leftmargin=+.6in]
	\item [\textbf{$0$-cells}] It maps a $\kappa$-cocomplete small category $\CCC$ to $\Ind_{\kappa}(\CCC)$, that is, its $\Ind_{\kappa}$-completion, also known as \emph{free completion under $\kappa$-filtered colimits} (see \cite[\S3]{adamekborceuxlackrosicky02}). Observe that $\Ind_{\kappa}(\CCC) \cong \Lex_{\kappa}(\CCC^{\op},\Set)$, see for example \cite[\S5.5]{gabrielulmer71} or \cite[Thm 2.4]{adamekborceuxlackrosicky02}, and this is a locally $\kappa$-presentable category \cite[Thm 1.46]{adamekrosicky94}.
	\item[\textbf{$1$-cells}]  It maps a $\kappa$-small colimit preserving functor $f: \CCC \to \DDD$ to the adjunction 
	\[S(f) : \Ind_{\kappa}(\CCC) \rightleftarrows \Ind_{\kappa}(\DDD) : Z(f),\]
	where the right adjoint $Z(f)$ can be described as the restriction to $\Ind_{\kappa}(\DDD)$ of the restriction of scalars functor $f^*: [\DDD^{\op},\Set] \to [\CCC^{\op}, \Set]$, which is easily checked to take values in $\Ind_{\kappa}(\CCC)$. Since the $\Ind_{\kappa}$-completion is reflective in the category of presheaves, limits in the $\Ind_{\kappa}$-completion are computed pointwise, and hence, as $f^*$ preserves all limits, so does $Z(f)$. Consequently, by the adjoint functor theorem, $Z(f)$ has a left adjoint, that we denote by $S(f)$. In addition, as shown in \Cref{Sfstructure} below, we have that the diagram 
	\begin{equation*}\label{defSf}
		\vcenter{\xymatrix{
				\CCC \ar[d]^{ \alpha_{\CCC}}\ar[r]^f  & \DDD \ar[d]_{ \alpha_{\DDD}}\\
				\Ind_k (\CCC) \ar[r]_{S(f)} &\Ind_k (\DDD)  \\
		}}
	\end{equation*}
	commutes and thus $S(f)$  preserves $\kappa$-presentable objects because the corestriction of $\alpha_{\CCC}: \CCC \rightarrow \Pres_{\kappa}(\Ind_{\kappa}(\CCC))$ is an equivalence as a direct consequence of \cite[Thm 1.46]{adamekrosicky94} (analogously for $\DDD$)). Therefore $Z(f)$ is $\kappa$-accessible as a direct consequence of \cite[Prop 2.23]{adamekrosicky94}. Hence, the adjunction $S(f) \dashv Z(f)$, that we will denote by $\Ind_{\kappa}(f)$, is indeed a $1$-cell in $\catpresl$. 
	\item[\textbf{$2$-cells}] It maps a natural transformation $\alpha: f \Rightarrow g: \CCC \rightarrow \DDD$ to 
	\[\alpha^*: Z(g) \Rightarrow Z(f): \Ind_{\kappa}(\DDD) \to \Ind_{\kappa}(\CCC)\] 
	defined by $\alpha^*_F(C) \coloneqq F(\alpha_C) : g^*(F)(C) = F(g(C)) \to F(f(C)) = f^*(F)(C)$ for all $C \in \CCC$ and all $F \in \Ind_{\kappa}(\DDD)$.
\end{itemize}
\begin{rem}\label{Sfstructure}
	We know that the functor $S(f)$ is cocontinous, because it is a left adjoint, and that $\alpha_{\CCC}$ is dense (i.e. $\text{Lan}_{\alpha_{\CCC}}(\alpha_{\CCC}) \cong \text{id}_{\Ind_{\kappa} \CCC}$). Consequently, $S(f)$ must coincide with $\text{Lan}_{\alpha_{\CCC}}(\alpha_{\BBB} f)$, indeed
	\begin{align*}
		S(f)  &= S(f) \, \text{id}_{\Ind_{\kappa} (\CCC)}  \\
		& \cong  S(f) \, \text{Lan}_{\alpha_{\CCC}}(\alpha_{\CCC})  \\
		& \cong  \text{Lan}_{\alpha_{\CCC}}(S(f) \, \alpha_{\CCC} )  \\
		& \cong \text{Lan}_{\alpha_{\CCC}}(\alpha_{\DDD} \, f), \\
	\end{align*}
	where we use the key fact that a cocontinous functor preserves Kan extensions. A formal argument that can be found in \cite[Lem 2.16]{dilibertiloregian18} proves that in this case the right adjoint $Z(f)$ has to coincide with $\text{Lan}_{\alpha_{\DDD}\, f}(\alpha_{\CCC})$.
\end{rem}
The pseudofunctor $\Pres_\kappa$ acts as follows:
\begin{itemize}
	[leftmargin=+.6in]
	\item [\textbf{$0$-cells}]  It maps a locally $\kappa$-presentable $\ca$ to $\Pres_{\kappa}(\ca)$, the full subcategory of $\kappa$-presentable objects. Observe that this category is (essentially) small.
	\item[\textbf{$1$-cells}]  It maps an adjunction $L : \ca \rightleftarrows \cb: R$ with $R$ $\kappa$-accessible to the restriction of $L$ to $\Pres_{\kappa}(\ca)$. It is easy to check that the left adjoint of a $\kappa$-accessible functor preserves $\kappa$-presentable objects and thus this restriction actually lands in $\Pres_{\kappa}(\cb)$. We know that the embeddings $\Pres_{\kappa}(\ca) \hookrightarrow \ca$ and $\Pres_{\kappa}(\cb) \hookrightarrow \cb$ create $\kappa$-small colimits, thus they are preserved by the restriction of $L$, since $L$ is a left adjoint. 
	\item[\textbf{$2$-cells}] It maps a natural transformation $\alpha: (L \dashv R) \Rightarrow (L' \dashv R'): \ca \rightarrow \cb$ between two $\kappa$-accessible right adjoints (that is a natural transformation $R \Rightarrow R': \cb \rightarrow \ca$) to the naturally induced natural transformation between the respective left adjoints $L' \Rightarrow L: \ca \rightarrow \cb$ restricted to the restrictions of $L$ and $L'$ to the subcategories of $\kappa$-presentable objects as explained above. 
\end{itemize}
\begin{rem}\label{RexLex}
	Some readers might be familiar with a different presentation of Gabriel-Ulmer duality involving, instead of $\Rex_{\kappa}$, the $2$-category $\Lex_{\kappa}$ of small categories with $\kappa$-small limits, where $1$-cells are the $\kappa$-small limit preserving functors and $2$-cells are the natural transformations. Both versions of the duality are related as follows:
	
	We have a (pseudo)commutative diagram of biequivalences
	\usetagform{fn}
	\begin{equation}
		\begin{tikzcd}
			\Rex^{\coop}_{\kappa} \ar[bend right=11, ddr, "\op"' description] \ar[bend left=11, rr, "\Ind_{\kappa}" description]  & & \catpresl \ar[bend right=16, ddl, "\Pres^{\op}_{\kappa}" description] \ar[bend left=16, ll, "\Pres_{\kappa}" description]\\
			& & \\
			& \Lex^{\opp}_{\kappa}, \ar[ bend right=16, uur,  "\Lex_{\kappa}" description]  \ar[bend right=16, uul, "\op" description]  & \\
		\end{tikzcd}
	\end{equation}
	\footnotetext{Observe that the passage to the conjugate category $\Rex^{\text{co}}$ is needed just because $ [\CCC^{\op},\DDD^{\op}] \cong [\CCC,\DDD]^{\op}$.}
	where $\op$ denotes the biequivalence provided by taking the opposite category and $\Lex_{\kappa}$ is a short notation for the pseudofunctor $\Lex_{\kappa}(-, \Set)$ In particular, we have that
	\[\text{Ind}_{\kappa} \cong \Lex_{\kappa}((-)^{\op}, \Set),\]
	as we have already pointed out at the level of $0$-cells.
\end{rem}
\begin{rem}
	In \cite{makkaipitts87} the authors observed that the typical paradigm of Stone-like dualities can be used to re-enact Gabriel-Ulmer duality as follows:
	\begin{align*}
		\Pres_{\kappa}(\ca)^{\op} & \cong \Lex_{\kappa}(\mathbb{1}, \Pres_{\kappa}(\ca)^{\op}) \\
		& \cong  \Lex_{\kappa}^{\opp}(\Pres_{\kappa}(\ca)^{\op}, \mathbb{1}) \\
		& \stackrel{\text{GU}}{\cong}  \catpresl(\ca, \Lex_{\kappa}(\mathbb{1}, \Set))\\
		&  \cong  \catpresl(\ca,  \Set),
	\end{align*}
	for all $\ca$ locally $\kappa$-presentable category. This chain of isomorphisms proves that the $2$-functor $\Pres_{\kappa}^{\op}$ is in fact represented in the $2$-category $\catpresl$ by the object $\Set$.
\end{rem}

\begin{rem}\label{synsem}
	Gabriel-Ulmer duality is usually referred to as a duality of syntax-semantics kind. This is easily illustrated from the perspective of cartesian logic, also known as finite limit logic, the study of which was introduced in the works by Freyd \cite{freyd02}, Isbell \cite{isbell72} and Coste \cite{coste76}. A category with finite limits $\CCC$ can be seen as a cartesian theory and a functor $M: \CCC \to \Set$ preserving finite limits can be seen as a model of the theory $\CCC$ in $\Set$. The category $\Lex(\CCC, \Set)$ can be thus identified with the category of models of $\CCC$. From this perspective, Gabriel-Ulmer duality establishes a reconstruction result: the category of models of a cartesian theory (the semantics) fully determines the theory (the syntax). Observe that this same observation applies equally to cartesian logic of higher cardinality.
	Accordingly, throughout the rest of the paper, we will refer to the $2$-categories $\Lex_{\kappa}$ and $\Rex_{\kappa}$ as the syntactic side of the duality, while $\catpresl$ will be referred to as the semantic part.
\end{rem}

\section{Grothendieck topoi among locally presentable categories}\label{sectioncharacterization}
In \cite{carbonipedicchiorosicky01} the authors provide a characterization of Grothendieck topoi among locally finitely presentable categories. The result can be stated as follows:
\begin{thm}[{\cite[Thm 5 + Thm 9]{carbonipedicchiorosicky01}}]\label{characterizationfinite}
	Let $\CCC$ be a small category with finite colimits. The following are equivalent:
	\begin{enumerate}[label=\rm (\arabic*)]
		\item $\CCC$ is extensive and pro-exact.
		\item $\Ind(\CCC)$ is a Grothendieck topos.
	\end{enumerate}
	A locally finitely presentable category $\ca$ is hence a Grothendieck topos if and only if its full subcategory of finitely presentable objects $\Pres_{\aleph_0}\ca$ is extensive and pro-exact.
\end{thm}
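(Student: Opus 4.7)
The plan is to apply Giraud's theorem, which characterises Grothendieck topoi among locally presentable categories as those that are also extensive (disjoint and universal finite coproducts) and Barr-exact (every equivalence relation is a kernel pair). Since $\Ind(\CCC)$ is automatically locally finitely presentable whenever $\CCC$ has finite colimits, and since the canonical embedding $\CCC \hookrightarrow \Ind(\CCC)$ identifies $\CCC$ with $\Pres_{\aleph_0}(\Ind(\CCC))$, the task reduces to translating extensivity and Barr-exactness of $\Ind(\CCC)$ into intrinsic conditions on $\CCC$.

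First I would address extensivity. One direction is essentially immediate: finite colimits and the relevant finite limits of finitely presentable objects are preserved by the embedding and remain within $\CCC$, so extensivity of $\Ind(\CCC)$ restricts to extensivity of $\CCC$. For the converse I would exploit that every object of $\Ind(\CCC)$ is a filtered colimit of objects of $\CCC$ and that filtered colimits commute with finite limits in $\Ind(\CCC)$. This reduces the pullback-over-coproduct diagrams witnessing extensivity of $\Ind(\CCC)$ to filtered colimits of analogous diagrams in $\CCC$, where the hypothesis applies.

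The main obstacle I expect is the exactness part. A generic equivalence relation $R \rightrightarrows X$ in $\Ind(\CCC)$ is not the image of an equivalence relation in $\CCC$: it can only be presented as a filtered colimit of parallel pairs $R_i \rightrightarrows X_i$ with $R_i, X_i \in \CCC$, and the $R_i$ typically fail to be equivalence relations on the nose, assembling instead into what one should call a \emph{pro-equivalence relation}. The notion of pro-exactness is engineered precisely so that such pro-equivalence relations admit effective pro-quotients, whose colimits in $\Ind(\CCC)$ provide the required kernel-pair presentations. The delicate step is to construct, from any equivalence relation in $\Ind(\CCC)$, such a canonical filtered presentation and to verify that the colimit of the pro-quotients furnished by pro-exactness genuinely coequalises the original pair; extensivity of $\CCC$ is used crucially here to ensure the stability under pullback along the structural maps of the filtered diagram. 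Conversely, any failure of pro-exactness in $\CCC$ can be lifted to produce an equivalence relation in $\Ind(\CCC)$ whose quotient is not effective, contradicting Barr-exactness.

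The final assertion concerning a locally finitely presentable category $\ca$ then follows by applying the equivalence of (1) and (2) to $\CCC = \Pres_{\aleph_0}\ca$ and invoking the Gabriel--Ulmer equivalence $\Ind(\Pres_{\aleph_0}\ca) \simeq \ca$ recalled in \Cref{GU}.
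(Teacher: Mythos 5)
Your overall architecture is the same as the one underlying the cited result and its $\kappa$-ary generalization in this paper (reduce to Giraud's characterization, treat extensivity and exactness separately, and finish with Gabriel--Ulmer), but as it stands the argument has two genuine gaps. First, the form of Giraud's theorem you invoke is too weak: the characterization actually needed (\Cref{Giraud}) asks that the locally presentable category be exact and \emph{infinitary} extensive, i.e.\ that \emph{small} coproducts be disjoint and universal, not merely finite ones. Your filtered-colimit reduction only treats the coproduct-of-a-finite-family diagrams, since an infinite coproduct in $\Ind(\CCC)$ cannot be rewritten as a filtered colimit of ``analogous diagrams in $\CCC$'' ($\CCC$ has no infinite sums). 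So even granting your sketch you obtain only finite extensivity of $\Ind(\CCC)$, which does not feed into Giraud as stated; one must additionally show that finite extensivity of $\CCC$ forces infinitary extensivity of $\Ind(\CCC)$. This is exactly what \Cref{extensive} supplies, e.g.\ by exhibiting $\Ind(\CCC)=\Lex(\CCC^{\op},\Set)$ as a subcategory of the sheaf topos $\CCC^{\op}$-$\Alg$ for the extensive topology, closed under limits and small sums (alternatively, one must argue directly that small sums are filtered colimits of finite sums of representables and that these colimits are pullback-stable). Also, in your ``immediate'' direction note that $\CCC$ is not assumed to have pullbacks: the correct statement is that the Yoneda embedding reflects the relevant pullback squares and that coproduct summands of finitely presentable objects are finitely presentable.

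Second, and more seriously, the heart of the theorem --- that pro-exactness of $\CCC$ is equivalent to exactness of $\Ind(\CCC)$ (\cite[Thm 19]{carbonipedicchiorosicky01}, \Cref{exact}) --- is described rather than proved. Your gloss of pro-exactness (``pro-equivalence relations admit effective pro-quotients'') is not \Cref{defproexact}: pro-exactness is a factorization property of graphs through quotients of \emph{iterations} of reflexive symmetric graphs, and it presupposes that $\CCC$ is weakly regular. In particular, before effectivity of equivalence relations can even be discussed one needs $\Ind(\CCC)$ to be regular, and this bridge (weak regularity of $\CCC$ iff regularity of $\Ind(\CCC)$, \Cref{regularweaklyregular}) is a separate nontrivial step: it uses Makkai's theorem (\Cref{Makkai}), i.e.\ $\Pres_{\kappa}([\III,\ca])=[\III,\Pres_{\kappa}(\ca)]$, to present the category of regular epimorphisms of $\Ind(\CCC)$ by the regular epimorphisms of $\CCC$, so that coequalizers and images in $\Ind(\CCC)$ are controlled by data in $\CCC$. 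Your outline never addresses this, and asserting that pro-exactness is ``engineered precisely so that'' the filtered presentations work assumes the conclusion: the actual content is the verification that the equivalence relation generated by a reflexive symmetric graph in $\Ind(\CCC)$ is the filtered colimit of (images of) its iterations, together with the converse lifting, and that verification is missing.
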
 
The aim of this section is to generalize \Cref{characterizationfinite} to higher cardinals in order to provide the characterization of Grothendieck topoi among locally $\kappa$-presentable categories as stated in \Cref{characterizationgeneralization} above. This result will provide the restriction of Gabriel-Ulmer duality to Grothendieck topoi at the level of objects. 

In order to prove \Cref{characterizationgeneralization} we follow the finitary version of the arguments as presented in \cite{carbonipedicchiorosicky01}. The reader should notice that, while a higher cardinality version of extensivity needs to be considered in order to generalize the proof, this is not the case for pro-exactness. 

The following characterization of Grothendiek topoi (see for example \cite[\S 4.3]{carbonivitale98}), which is a corollary of Giraud's characterization, will be useful for our purposes.
\begin{thm}[Giraud]\label{Giraud} Let $\cc$ be a category. The following are equivalent:
	\begin{enumerate}[label=\rm (\arabic*)]
		\item $\cc$ is a Grothendieck topos.
		\item  $\cc$ is exact (\Cref{defexact}), infinitary extensive (\Cref{defextensive}) and has a set of generators.
	\end{enumerate}
\end{thm}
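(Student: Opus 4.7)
The plan is to deduce this from the classical Giraud theorem, which characterises Grothendieck topoi as cocomplete locally small categories with a small generating set in which coproducts are disjoint and universal and equivalence relations are effective. The first step is to unpack the definitions of \emph{exact} (\Cref{defexact}) and \emph{infinitary extensive} (\Cref{defextensive}) and check that, taken together, they encode precisely the remaining Giraud axioms: existence of finite limits with pullback-stable regular epi/mono factorisations and effective equivalence relations on the exact side, and existence of small coproducts which are both disjoint and stable under pullback on the extensive side. Combined with the small generating set, one can then derive all small colimits (coequalisers from effectivity of kernel pairs, coproducts from extensivity) so cocompleteness is automatic.

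For the direction $(1) \Rightarrow (2)$, assume $\cc \simeq \Sh(\cs,J)$ for some small site. The sheafifications of the representables form a small generating set. Infinitary extensivity and exactness both hold in the presheaf topos $[\cs^{\op},\Set]$, since each condition can be verified pointwise and holds in $\Set$, and they transfer to $\cc$ because the sheafification functor $a \colon [\cs^{\op},\Set] \to \cc$ is a left exact left adjoint, making $\cc$ a left-exact reflective subcategory (disjoint universal coproducts and effective equivalence relations are both preserved by left-exact reflections).

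For the direction $(2) \Rightarrow (1)$, let $\cg$ be a small full subcategory of $\cc$ spanned by a generating set, equipped with the canonical topology $J_{\mathrm{can}}$ whose covers are the jointly epimorphic families in $\cc$. The restricted Yoneda embedding $y \colon \cc \to [\cg^{\op},\Set]$, $X \mapsto \cc(-,X)|_\cg$, lands in $\Sh(\cg,J_{\mathrm{can}})$ by construction and is fully faithful because $\cg$ generates. The key step is essential surjectivity: given a sheaf $F$, form its category of elements, reproduce the induced diagram of generators as a colimit in $\cc$, and show that $y$ takes this colimit back to $F$. The infinitary extensivity of $\cc$ is used to compute the ``sum'' part of such colimits (the reproduced disjoint unions of representables remain disjoint in $\cc$ and interact well with pullback, so their images under $y$ are the expected pointwise sums), while exactness handles the ``quotient'' part (effective equivalence relations let one recognise the colimit as a quotient by a kernel pair, and the regular factorisation system guarantees that this quotient is preserved by $y$).

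The main obstacle is precisely this essential surjectivity argument, together with verifying that the induced inverse functor is left exact so as to exhibit $\cc$ as a left-exact reflective subcategory of the presheaf topos $[\cg^{\op},\Set]$. Both extensivity and exactness are needed in full strength here, as they are what force the colimit reconstruction of a sheaf to be compatible with the hom-functors $\cc(G,-)$ for $G \in \cg$. The detailed argument follows the classical proof of Giraud's theorem as presented in \cite[\S 4.3]{carbonivitale98}, from which the statement of this theorem is a direct corollary once one matches the two sets of axioms.
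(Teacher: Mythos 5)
Your proposal is correct in outline and takes essentially the same route as the paper: the paper does not prove this statement at all, but records it as a known corollary of the classical Giraud theorem with a pointer to \cite[\S 4.3]{carbonivitale98}, which is exactly the source and the axiom-matching argument your sketch ultimately defers to. The only caution is that a few of your shortcuts (e.g.\ full faithfulness of the restricted Yoneda embedding ``because $\cg$ generates'', and cocompleteness ``automatic'' from effectivity plus coproducts) genuinely require the exactness/extensivity axioms and are carried out in detail in that reference rather than being immediate.
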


\subsection{Extensivity and $\kappa$-extensive syntaxes}\label{extensivity}
In \cite[\S 2]{carbonipedicchiorosicky01} a characterization of those small categories whose $\Ind$-completions are extensive is provided. 
The content of this subsection is a generalization of this characterization to higher cardinals.
\begin{defn}\label{defextensive}
	Let $\kappa$ be a regular cardinal. A category $\CCC$ is called \emph{$\kappa$-extensive} (resp. \emph{infinitary extensive}) if it is closed under $\kappa$-small (resp. small) sums and for each $\kappa$-small (resp. small) family $\{X_i\}$ of objects in $\CCC$, the sum functor between the comma categories
	\[
	\prod_i (\CCC/X_i) \xrightarrow{\coprod} \CCC/ ( \coprod_i X_i) 
	\]
	is an equivalence. If $\kappa = \aleph_0$, we simply say that $\CCC$ is \emph{extensive}. A category $\CCC$ is called \emph{$\kappa$-lextensive} (resp. \emph{infinitary lextensive}) if it is $\kappa$-extensive (resp. infinitary extensive) and has all $\kappa$-small (resp. small) limits. If $\kappa = \aleph_0$, we simply say that $\CCC$ is \emph{lextensive}.
\end{defn}
\begin{rem}\label{infinitaryimpliesfinitary}
	Observe that for two regular cardinals $\alpha, \beta$, such that $\alpha \geq \beta$, $\alpha$-extensive implies $\beta$-extensive. In particular, we have that infinitary extensive implies $\kappa$-extensive for every regular cardinal $\kappa$. 
\end{rem}
The following is a useful characterization of $\kappa$-extensive (resp. infinitary extensive) categories obtained as a direct generalization of the finitary case provided in \cite[Lem 1]{carbonipedicchiorosicky01}.
\begin{lem} \label{extensivecharacterization}
	Let $\kappa$ be a regular cardinal. Let $\CCC$ be a category with $\kappa$-small (resp. small) sums. Then
	\begin{enumerate}[label=\rm (\arabic*)]
		\item Given a family $\{X_i\}_{i\in I}$ of objects of $\CCC$ where $I$ is a $\kappa$-small (resp. small) indexing set, the functor
		\[
		\prod_i (\CCC/X_i) \xrightarrow{\coprod} \CCC/ (\coprod_i X_i)
		\]
		is fully faithful if and only if for all families of objects $\{A_i\}_{i \in I}$ and all families of morphisms $\{f_i: A_i \rightarrow X_i\}_{i\in I}$, the commutative squares
		\[
		\begin{tikzcd}
		A_i \arrow[r,"f_i"] \arrow[d,"i_{A_i}"'] &X_i \arrow[d,"i_{X_i}"]\\
		\coprod_i A_i \arrow[r,"\coprod f_i"] &\coprod_iX_i
		\end{tikzcd}
		\]
		are pullbacks.
		\item Given a $\kappa$-small (resp. small) family $\{X_i\}_{i\in I}$ of objects of $\CCC$, the functor
		\[
		\prod_i (\CCC/X_i) \xrightarrow{\coprod} \CCC/ (\coprod_i X_i)
		\]
		is essentially surjective if and only if for each morphism $t: A \rightarrow \coprod_i X_i$ there exists a $\kappa$-small (resp. small) family of morphisms $\{t_i: A_i \rightarrow X_i\}$ and an isomorphism $f: \coprod_i A_i \rightarrow A$ such that $t \, f = \coprod t_i$.
	\end{enumerate}
\end{lem}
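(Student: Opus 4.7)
The plan is to adapt the finitary argument of \cite[Lem 1]{carbonipedicchiorosicky01} essentially verbatim; the cardinality of the index set $I$ enters only through the size of the coproducts involved, and every construction below manipulates a single $I$-indexed family at a time, so nothing changes when $\aleph_0$ is replaced by $\kappa$ or by no bound.

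For part (1), ``fully faithful $\Rightarrow$ pullback squares'' is handled by a test-cone argument. Fix $j \in I$ and a cone $(a: Z \to X_j,\ b: Z \to \coprod_i A_i)$ with $i_{X_j} a = (\coprod f_i) b$; I build an auxiliary object $P = (p_i: P_i \to X_i)_{i \in I}$ in $\prod_i (\CCC/X_i)$ by setting $(P_j, p_j) = (Z, a)$ and $(P_i, p_i) = (A_i, f_i)$ for $i \neq j$. Then $b$ together with $\{i_{A_i}\}_{i \neq j}$ assembles into a morphism $h: \coprod P_i \to \coprod A_i$ over $\coprod X_i$, and fullness yields a decomposition $h = \coprod h_i$ whose $j$-th component $h_j: Z \to A_j$ is the desired factorization; uniqueness is obtained from faithfulness by pairing any two candidate factorizations with identities at the other positions. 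For the converse, given $h: \coprod A_i \to \coprod B_i$ over $\coprod X_i$, I apply the pullback square at each $j$ to the cone $(f_j,\ h \circ i_{A_j})$ to produce the required component $h_j: A_j \to B_j$, and $\coprod h_i$ agrees with $h$ on every injection. For faithfulness, given parallel families $(h_i), (h_i')$ with $\coprod h_i = \coprod h_i'$, composition with $i_{A_j}$ gives $i_{B_j} h_j = i_{B_j} h_j'$; applying the pullback square for $(B_i, g_i)_i$ to the cone $(f_j,\ i_{B_j} h_j)$, and exploiting that $g_j h_j = g_j h_j' = f_j$, the uniqueness clause of the pullback forces $h_j = h_j'$.

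Part (2) is a direct unwinding of the notion of isomorphism in $\CCC/\coprod_i X_i$: an isomorphism from $\coprod t_i$ to $t$ in this slice is exactly an isomorphism $f: \coprod A_i \to A$ with $t f = \coprod t_i$, so the displayed condition coincides verbatim with essential surjectivity of the sum functor.

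I expect no conceptual obstacle. The one delicate point is that the monic-like behaviour of the coproduct injections required for faithfulness cannot be read off from the pullback axiom applied to the trivial family $f_i = \text{id}_{X_i}$, which collapses to a tautology; one must instead apply the pullback axiom to the family $(B_i, g_i)_i$ already in play, so that the equality $g_j h_j = g_j h_j' = f_j$ can be fed into its uniqueness clause. All remaining steps are routine manipulations with coproduct injections.
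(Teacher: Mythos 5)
Your argument is correct and is precisely the routine generalization the paper intends: the paper gives no proof of this lemma, deferring to the finitary case \cite[Lem 1]{carbonipedicchiorosicky01}, and your diagram chase (the auxiliary family with $(Z,a)$ in the $j$-th slot for fullness/pullback, and the tautological unwinding for essential surjectivity) is the direct $\kappa$-ary transcription of that argument. Your handling of faithfulness --- invoking the uniqueness clause of the pullback for the family $(B_i,g_i)$ rather than assuming the coproduct injections are monic --- is exactly the right way to avoid the one pitfall here.
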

%
\begin{rem}\label{subcategoriesextensive}
	Any subcategory of an infinitary extensive category which is closed under finite limits and arbitrary sums is again infinitary extensive. This follows immediately from \Cref{extensivecharacterization}.
\end{rem}

The following is the higher cardinality version of \cite[Thm 5]{carbonipedicchiorosicky01}.
\begin{lem} \label{extensive}
	Let $\CCC$ be a small $\kappa$-cocomplete category. The following are equivalent:
	\begin{enumerate}[label=\rm (\arabic*)]
		\item $\CCC$ is $\kappa$-extensive;
		\item $\Ind_{\kappa}(\CCC)$ is infinitary extensive;
		\item $\Ind_{\kappa}(\CCC)$ is extensive.
	\end{enumerate}
\end{lem}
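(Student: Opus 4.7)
The implication $(2) \Rightarrow (3)$ is immediate from \Cref{infinitaryimpliesfinitary}.

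For $(3) \Rightarrow (1)$, I would restrict the extensivity equivalence of $\Ind_\kappa(\CCC)$ down to $\CCC$ via the fully faithful, $\kappa$-small colimit preserving embedding $\alpha_\CCC$. Given a $\kappa$-small family $\{X_i\}_{i \in I}$ in $\CCC$, I verify the two conditions of \Cref{extensivecharacterization}. The pullback condition transfers from $\Ind_\kappa(\CCC)$ to $\CCC$ because a fully faithful functor reflects pullback squares whose vertices lie in its image, and both $\coprod_i A_i$ and $\coprod_i X_i$ are computed in $\CCC$ by $\kappa$-cocontinuity of $\alpha_\CCC$. For essential surjectivity, a morphism $t \colon A \to \coprod_i X_i$ in $\CCC$ decomposes in $\Ind_\kappa(\CCC)$ as $\coprod t_i$ with $\coprod_i A_i \cong A$ for some $A_i \in \Ind_\kappa(\CCC)$; it remains to argue that each $A_i$ lies, up to equivalence, in $\CCC$. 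Since $A$ is $\kappa$-presentable and $|I| < \kappa$, combining the commutation of $\kappa$-small coproducts with $\kappa$-filtered colimits together with the extensivity of $\Ind_\kappa(\CCC)$ exhibits each $A_i$ as a retract of a $\kappa$-presentable object, hence $\kappa$-presentable and, up to equivalence, in $\CCC$.

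The main implication is $(1) \Rightarrow (2)$, which I split into two steps. First, I show that $\Ind_\kappa(\CCC)$ is $\kappa$-extensive. Given a $\kappa$-small family $\{X_i\}_{i \in I}$ in $\Ind_\kappa(\CCC)$, I present each $X_i$ as a $\kappa$-filtered colimit $\colim_{d \in \cd_i} c_{i,d}$ with $c_{i,d} \in \CCC$. Since $\kappa$-small coproducts commute with $\kappa$-filtered colimits, $\coprod_i X_i$ is itself a $\kappa$-filtered colimit, indexed by the $\kappa$-filtered category $\prod_i \cd_i$, of the $\kappa$-small sums $\coprod_i c_{i,d_i}$ taken in $\CCC$, each extensive by hypothesis $(1)$. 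The extensivity conditions of \Cref{extensivecharacterization} lift from the pieces to $\coprod_i X_i$ because $\kappa$-filtered colimits commute with finite limits (in particular with pullbacks) in the locally $\kappa$-presentable category $\Ind_\kappa(\CCC)$. Second, I bootstrap to infinitary extensivity: for an arbitrary small family $\{X_i\}_{i \in I}$ in $\Ind_\kappa(\CCC)$, I write $\coprod_{i \in I} X_i \cong \colim_J \coprod_{i \in J} X_i$ as a $\kappa$-directed colimit along the poset of $\kappa$-small subsets $J \subseteq I$ (which is $\kappa$-directed by regularity of $\kappa$). Each inner sum is extensive by Step~1, and the same commutation of finite limits with $\kappa$-filtered colimits transfers the extensivity conditions to the full sum.

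The main obstacle will be the careful bookkeeping in Step~1 of $(1) \Rightarrow (2)$: verifying that the pullback squares witnessing extensivity of $\coprod_i X_i$ in $\Ind_\kappa(\CCC)$ are indeed the $\kappa$-filtered colimits of the corresponding extensivity pullbacks in $\CCC$, and analogously that the essential surjectivity condition can be assembled from the pieces in a coherent way. This is ultimately a three-way commutation statement among coproducts, $\kappa$-filtered colimits, and finite limits, each of which individually behaves well in $\Ind_\kappa(\CCC)$, but whose assembly requires attention to the coherences linking the chosen presentations of the $X_i$'s with the decompositions of the $A_i$'s provided by $\kappa$-extensivity of $\CCC$.
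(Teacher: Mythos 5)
Your argument for $(2)\Rightarrow(3)$ and $(3)\Rightarrow(1)$ is essentially the paper's (transfer along the fully faithful, $\kappa$-cocontinuous $\alpha_{\CCC}$, plus the fact that the summands in the decomposition of a representable are $\kappa$-presentable, hence representable since $\CCC$ is $\kappa$-cocomplete and so idempotent-complete); one caveat there: a coproduct summand of a $\kappa$-presentable object is not in general a \emph{retract} of it (there need be no retraction onto a summand), so you should argue directly, as the paper tersely does, that summands of $\kappa$-presentables are $\kappa$-presentable. For the main implication $(1)\Rightarrow(2)$ you take a genuinely different route. The paper never manipulates filtered colimits directly: it uses that, precisely because $\CCC$ is $\kappa$-extensive, the category $\CCC^{\op}$-$\Alg$ of $\kappa$-small-product-preserving functors is the sheaf topos of the $\kappa$-extensive topology, hence infinitary extensive by \Cref{Giraud}, and then checks that $\Ind_{\kappa}(\CCC)=\Lex_{\kappa}(\CCC^{\op},\Set)$ is closed in it under small limits and arbitrary sums, so \Cref{subcategoriesextensive} gives infinitary extensivity in one stroke. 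Your two-step argument (first $\kappa$-extensivity of $\Ind_{\kappa}(\CCC)$ from levelwise $\kappa$-extensivity of $\CCC$, then bootstrapping to arbitrary sums over the $\kappa$-directed poset of $\kappa$-small subsets) avoids sheaf theory and Giraud entirely, at the price of the coherence bookkeeping you flag; that bookkeeping is real but can be discharged: for the fully-faithfulness half you must present the \emph{morphisms} $f_i\colon A_i\to X_i$, not just the objects, as $\kappa$-filtered colimits of arrows of $\CCC$ (legitimate, e.g.\ by \Cref{Makkai} applied to the arrow category), and for essential surjectivity the choice-free device is to define the candidate summands as the pullbacks $A_i:=A\times_{\coprod_j X_j}X_i$ and prove $\coprod_i A_i\to A$ invertible by writing it as a $\kappa$-filtered colimit of the corresponding pullbacks in $\CCC$ (which exist and decompose the presentable stages by $\kappa$-extensivity of $\CCC$), using commutation of $\kappa$-filtered colimits with pullbacks and with $\kappa$-small coproducts; this removes the dependence on compatible choices of decompositions. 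In summary: the paper's proof is shorter and conceptual, making transparent where $\kappa$-extensivity of $\CCC$ is used (it is exactly what identifies the sheaves for the extensive topology with product-preserving presheaves), while your proof is elementary and self-contained and exhibits the statement as a pure commutation phenomenon among coproducts, pullbacks and $\kappa$-filtered colimits, provided the deferred coherence step is carried out as above.
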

\begin{proof}
	(1) $\Rightarrow$ (2): The category of models $\CCC^{\op}$-$\Alg$ for the $\kappa$-ary algebraic theory $\CCC^{\op}$ (this is, the category of functors $\CCC^{\op} \rightarrow \Set$ preserving $\kappa$-small products) is nothing but the topos of sheaves of the $\kappa$-extensive Grothendieck topology on $\CCC$, i.e. the Grothendieck topology on $\CCC$ induced by covers given by $\kappa$-small families $\{f_i:X_i \rightarrow X\}$ such that $\coprod_i f_i: \coprod_i X_i \rightarrow X$ is an isomorphism. For this, see for example \cite[\S2]{carbonipedicchiorosicky01} or \cite[\S B.4]{lurieultracategories} for the finitary case, or \cite[Lem 1.5.15]{low16} for the $\kappa$-ary case. We thus have, as a direct application of Giraud's characterization of Grothendieck topoi (see \Cref{Giraud}) that $\CCC^{\op}$-$\Alg$ is infinitary extensive. In addition, we have that $\Ind_{\kappa}(\CCC) = \Lex_{\kappa}(\CCC^{\op}, \Set)$ is a full subcategory of $\CCC^{\op}$-$\Alg$ which is closed under small limits and arbitrary sums. That limits in $\Ind_{\kappa}(\CCC)$ are computed as in $\CCC^{\op}$-$\Alg$ follows immediately from the fact that $\Ind_{\kappa}(\CCC)$ is reflective in $\CCC^{\op}$-$\Alg$. To show that arbitrary sums in $\Ind_{\kappa}(\CCC)$ are computed as in $\CCC^{\op}$-$\Alg$, recall that $\kappa$-filtered colimits are pointwise in $\Ind_{\kappa}(\CCC)$ as well as in $\CCC^{\op}$-$\Alg$ \cite[Thm 1.52 + Rem]{adamekrosicky94} and that $\kappa$-small coproducts of representables are computed both in $\Ind_{\kappa}(\CCC)$ and in $\CCC^{\op}$-$\Alg$ as in $\CCC$ \cite[Prop 6.13]{kelly05}. One then concludes by observing that, in both $\Ind_{\kappa}(\CCC)$ and $\CCC^{\op}$-$\Alg$, every arbitrary sum can be written as a $\kappa$-filtered colimit of $\kappa$-small coproducts of representables. Hence, as $\CCC^{\op}$-$\Alg$ is infinitary extensive, so is $\Ind_{\kappa}(\CCC)$, as a direct consequence of \Cref{subcategoriesextensive}.
	
	(2) $\Rightarrow$ (3): It follows trivially from the definitions (see \Cref{infinitaryimpliesfinitary}).
	
	(3) $\Rightarrow$ (1): The argument is an analogue of the one used in the proof of \cite[Lem 3]{carbonipedicchiorosicky01} for the $\kappa$-ary case. We write it down for convenience of the reader. We are going to use the characterization of $\kappa$-extensive categories provided by \Cref{extensivecharacterization} above. 
	
	Given two $\kappa$-small families $\{A_i\}_{i \in I}$ and $\{X_i\}_{i\in I}$ of objects in $\CCC$, consider the commutative squares in $\CCC$
	\begin{equation}\label{property1}
	\begin{tikzcd}
	A_i \arrow[r,"f_i"] \arrow[d,"i_{A_i}"'] &X_i \arrow[d,"i_{X_i}"]\\
	\coprod_i A_i \arrow[r,"\coprod f_i"] &\coprod_iX_i.
	\end{tikzcd}
	\end{equation}
	We know that the corestriction $\CCC \rightarrow \Ind_{\kappa}(\CCC): T \mapsto h_T$ of the Yoneda functor preserves $\kappa$-small colimits \cite[Prop 6.13]{kelly05}, hence we have the commutative square
	\[
	\begin{tikzcd}
	&[-20pt] h_{A_i} \arrow[r,"h_{f_i}"] \arrow[d,"i_{h_{A_i}}"'] &h_{X_i} \arrow[d,"i_{h_{X_i}}"]& [-20pt]\\
	h_{\coprod_i A_i} \arrow[equal,r] &\coprod_i h_{A_i} \arrow[r,"\coprod h_{f_i}"] &\coprod_ih_{X_i} \arrow[equal,r]&h_{\coprod_i X_i}
	\end{tikzcd}
	\]
	in $\Ind_{\kappa}(\CCC)$ which is a pullback by hypothesis. As limits in $\Ind_{\kappa}(\CCC)$ are the limits computed in $[\CCC^{\op},\Set]$, and the Yoneda functor preserves and creates limits, we have that the diagram (\ref*{property1}) is also a pullback in $\CCC$.
	
	Consider now a morphism $t:A \rightarrow \coprod_i X_i$ in $\CCC$ with $\{X_i\}_i$ a $\kappa$-small family and take its image $h_t: h_A \rightarrow h_{\coprod_i X_i}= \coprod_i h_{X_i}$ in $\Ind_{\kappa}(\CCC)$. By hypothesis there exist an $\kappa$-small family of morphisms $\{f_i:F_i \rightarrow h_{X_i}\}$ and an isomorphism $f: \coprod_i F_i \rightarrow h_A$ in $\Ind_{\kappa}(\CCC)$, such that $h_t \, f = \coprod_i f_i$. Recall that summands of $\kappa$-presentable objects are $\kappa$-presentable. As $\kappa$-presentable objects in $\Ind_{\kappa}(\CCC)$ are precisely the representables, we have that $F_i = h_{A_i}$ with $A_i \in \CCC$ for all $i$ and thus $\coprod_i F_i  = \coprod_i h_{A_i} = h_{\coprod_i A_i}$, where we use again that $\CCC \rightarrow \Ind_{\kappa}(\CCC)$ preserves $\kappa$-small colimits. As the Yoneda functor is fully faithful, $f_i = h_{s_i}: h_{A_i} \rightarrow h_{X_i}$ for a unique morphism $s_i:A_i \rightarrow X_i$ in $\CCC$ and $f = h_r: h_{\coprod_i A_i} \rightarrow h_A$ for an isomorphism $r: \coprod_i A_i \rightarrow A$ in $\CCC$. Hence, we have that 
	\[
	h_{t\, r}=h_t \, h_r  = h_t \, f = \coprod_i f_i = \coprod_i h_{s_i} = h_{\coprod_i s_i},
	\] 
	which by fully-faithfulness of the Yoneda functor, provides that $t \, r = \coprod_i s_i$, which finishes the argument.
\end{proof}

%
%

\subsection{Regularity and weakly regular syntaxes}\label{regularity} In \cite[\S7]{carbonipedicchiorosicky01} a syntactic characterization of the locally finitely presentable categories that are regular is provided. In this subsection we flesh out the generalization of this result to higher cardinalities.

\begin{defn} A category $\CCC$ is \textit{regular} when:
	\begin{itemize}
		\item[(R1)] it is finitely complete;
		\item[(R2)] every kernel pair admits a coequalizer;
		\item[(R3)] the pullback of a regular epimorphism along any morphism is again a regular epimorphism. Recall that an epimorphism is called \emph{regular} if it is a coequalizer of a pair of morphisms. 
	\end{itemize}
\end{defn}

We provide now the definition of \emph{weakly regular} category. To the best of our knowledge, this definition is due to the authors of \cite{carbonipedicchiorosicky01}.

\begin{defn} A category $\CCC$ is \textit{weakly regular} if every commutative square
	\[
	\begin{tikzcd}
	C \ar[r,"f'"] \ar[d,"g'"'] &D \ar[d,"g"]\\
	A\ar[r,"f"] & B.
	\end{tikzcd}
	\]
	in which $f$ is a regular epimorphism factors through a commutative square
	\[
	\begin{tikzcd}
	C \ar[bend right=40, ddr, "g'"'] \ar[bend left=40, drr, "f'"] \ar[dr, dotted] & & \\
	& \bar{C} \arrow[r, dotted, "\bar{f}"] \arrow[d, dotted, "\bar{g}"'] &D \arrow[d,"g"]\\
	& A\arrow[r,"f"] & B.
	\end{tikzcd}
	\]
	where $\bar{f}$ is a regular epimorphism.
\end{defn}

\begin{rem}
	As pointed out in \cite[Def 13]{carbonipedicchiorosicky01}, any regular category is weakly regular and any weakly regular category with pullbacks has stable regular epimorphisms.
\end{rem}
We will need the following result by Makkai.
\begin{thm}[{\cite[Thm 5.1]{makkai88}}]\label{Makkai}
	Let $\ca$ be a $\kappa$-accessible category and $\III$ a category of cardinality strictly smaller than $\kappa$. Then $[\III,\ca]$ is $\kappa$-accessible and $\Pres_{\kappa}([\III,\ca]) = [\III,\Pres_{\kappa}(\ca)]$.
\end{thm}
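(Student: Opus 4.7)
The plan is to verify the accessibility criterion \cite[Thm 2.26]{adamekrosicky94}: it suffices to exhibit a set $\cg$ of $\kappa$-presentable objects in $[\III,\ca]$ such that every object of $[\III,\ca]$ is a $\kappa$-filtered colimit of objects in $\cg$. We take $\cg = [\III, \Pres_{\kappa}(\ca)]$, which is essentially small because $\Pres_{\kappa}(\ca)$ is essentially small and $\III$ is small. A first preliminary step is to note that $[\III,\ca]$ admits $\kappa$-filtered colimits computed pointwise, by the standard argument (evaluation at each $i\in\III$ creates them from the $\kappa$-filtered colimits in $\ca$).

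The key pointwise-to-globally steps are the following two. First, every $F\in [\III, \Pres_{\kappa}(\ca)]$ is $\kappa$-presentable in $[\III,\ca]$: for a $\kappa$-filtered diagram $D\colon \cj\to [\III,\ca]$, the natural chain
\begin{align*}
    [\III,\ca]\bigl(F,\, \colim_{\cj} D\bigr)
    &\;\cong\; \textstyle\int_{i\in \III} \ca\bigl(F(i),\, \colim_{\cj} D(-)(i)\bigr) \\
    &\;\cong\; \textstyle\int_{i\in \III} \colim_{\cj} \ca\bigl(F(i),\, D(-)(i)\bigr) \\
    &\;\cong\; \colim_{\cj} \textstyle\int_{i\in \III} \ca\bigl(F(i),\, D(-)(i)\bigr)
    \;\cong\; \colim_{\cj} [\III,\ca]\bigl(F,\, D(-)\bigr)
\end{align*}
uses $\kappa$-presentability of each $F(i)$ in the second isomorphism and commutation of $\kappa$-small limits with $\kappa$-filtered colimits in $\Set$ in the third; it is precisely here that $|\III|<\kappa$ is essential. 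This also yields the inclusion $[\III,\Pres_{\kappa}(\ca)]\subseteq \Pres_{\kappa}([\III,\ca])$ needed for the identification of $\kappa$-presentables.

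Second, and this is the genuinely substantial step, every $F\colon \III\to \ca$ must be exhibited as a $\kappa$-filtered colimit of functors landing in $\Pres_{\kappa}(\ca)$. The natural candidate is the comma category $\cj_F$ whose objects are pairs $(G,\alpha)$ with $G\in [\III,\Pres_{\kappa}(\ca)]$ and $\alpha\colon G\Rightarrow F$, with the obvious forgetful diagram to $[\III,\ca]$; one must show that $\cj_F$ is $\kappa$-filtered and that the canonical cocone exhibits $F$ as its colimit. The colimit identity can be checked pointwise: evaluation at $i$ sends $\cj_F$ to a cofinal subcategory of $\Pres_{\kappa}(\ca)/F(i)$, whose colimit is $F(i)$ since $\ca$ is $\kappa$-accessible. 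The $\kappa$-filteredness of $\cj_F$ is the main obstacle and is the heart of Makkai's argument: given a $\kappa$-small diagram $(G_s,\alpha_s)_{s\in\cs}$ in $\cj_F$, one must produce a common cocone. Using pointwise that each $F(i)$ is a $\kappa$-filtered colimit of $\kappa$-presentables, together with $|\III|<\kappa$ and $|\cs|<\kappa$, one chooses compatible factorizations indexed by $i\in\III$, coherent with the morphisms of $\III$, and closes them up under the $\kappa$-small colimits that exist in $\Pres_{\kappa}(\ca)$ (for instance, by iteratively coequalizing the finitely many incompatibilities appearing when varying $i$). The bound $|\III|<\kappa$ guarantees that the closure procedure stays within the $\kappa$-filtered regime.

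Once these two steps are in place, \cite[Thm 2.26]{adamekrosicky94} gives that $[\III,\ca]$ is $\kappa$-accessible; the inclusion $[\III,\Pres_{\kappa}(\ca)]\subseteq \Pres_{\kappa}([\III,\ca])$ from the first step, together with the fact that every $\kappa$-presentable object of a $\kappa$-accessible category is a retract of an object in any generating set (combined with the closure of $\Pres_{\kappa}(\ca)$ under retracts, hence of $[\III,\Pres_{\kappa}(\ca)]$ under pointwise retracts), yields the reverse inclusion and therefore the equality $\Pres_{\kappa}([\III,\ca]) = [\III,\Pres_{\kappa}(\ca)]$.
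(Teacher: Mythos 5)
The paper offers no proof of this statement to compare against: it is quoted verbatim from Makkai's paper, so your argument has to stand entirely on its own. Its first half does stand: the pointwise computation of $\kappa$-filtered colimits in $[\III,\ca]$, the end/limit--colimit interchange showing $[\III,\Pres_{\kappa}(\ca)]\subseteq\Pres_{\kappa}([\III,\ca])$ (this is where $|\III|<\kappa$ enters, correctly), and the closing retract argument identifying $\Pres_{\kappa}([\III,\ca])$ are all fine.

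The genuine gap is exactly where you locate ``the heart of Makkai's argument'', and the route you sketch for it would fail under the stated hypotheses. The theorem assumes only that $\ca$ is $\kappa$-accessible, so neither $\ca$ nor $\Pres_{\kappa}(\ca)$ need have coequalizers or $\kappa$-small colimits, and $\Pres_{\kappa}(\ca)$ need not be closed under whatever colimits happen to exist; hence ``closing up under the $\kappa$-small colimits that exist in $\Pres_{\kappa}(\ca)$'' and ``iteratively coequalizing the incompatibilities'' are operations you are not entitled to. The same problem affects the cofinality claim that evaluation at $i$ sends $\cj_F$ onto a cofinal subcategory of $\Pres_{\kappa}(\ca)/F(i)$: to factor an arbitrary $P\to F(i)$ with $P$ $\kappa$-presentable through some $(G,\alpha)\in\cj_F$ one would naturally build $G$ as a Kan extension along $\{i\}\hookrightarrow\III$, i.e.\ from $\kappa$-small copowers, which again need not exist; as stated this claim is essentially equivalent to the density/filteredness you are trying to prove. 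What Makkai's proof actually supplies, and what is missing here, is the construction of a cocone for a $\kappa$-small family $(G_s,\alpha_s)$ using only the essential uniqueness of factorizations of morphisms out of $\kappa$-presentable objects through $\kappa$-filtered colimits, together with a transfinite bookkeeping over the fewer than $\kappa$ morphisms of $\III$; no colimits inside $\Pres_{\kappa}(\ca)$ are used. (If one only wants the locally $\kappa$-presentable case---which is all this paper ever applies the theorem to---your closure idea can be repaired: there $[\III,\Pres_{\kappa}(\ca)]$ really is closed under $\kappa$-small colimits in $[\III,\ca]$ and is a strong generator made of $\kappa$-presentables, so the standard recognition theorem for locally $\kappa$-presentable categories finishes the job. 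But that is a weaker statement than the one being proved.)
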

The following is the $\kappa$-ary generalization of \cite[Thm 14]{carbonipedicchiorosicky01}.
\begin{lem} \label{regularweaklyregular}
	Let $\CCC$ be a small category with $\kappa$-small colimits. The following are equivalent.
	\begin{enumerate}[label=\rm (\arabic*)]
		\item $\CCC$ is weakly regular.
		\item $\Ind_{\kappa}(\CCC)$ is regular.
	\end{enumerate}
\end{lem}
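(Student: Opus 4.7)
The plan is to mirror the finitary argument of \cite[Thm 14]{carbonipedicchiorosicky01}, with the Yoneda embedding $y : \CCC \hookrightarrow \Ind_{\kappa}(\CCC)$ acting as the bridge between the two sides and \Cref{Makkai} providing the replacement for the finitary accessibility arguments. Throughout, I will exploit that $y$ is fully faithful, preserves and reflects $\kappa$-small colimits (in particular coequalizers, hence regular epimorphisms of $\CCC$ go to regular epimorphisms of $\Ind_{\kappa}(\CCC)$), and preserves and reflects all limits that exist in $\CCC$.

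For $(2) \Rightarrow (1)$, I would start from a commutative square in $\CCC$ with $f: A \to B$ a regular epimorphism and take its image under $y$. Regularity of $\Ind_{\kappa}(\CCC)$ then provides the pullback
\[
\begin{tikzcd}
P \ar[r,"\bar{p}"] \ar[d,"\bar{q}"'] & y(D) \ar[d,"y(g)"]\\
y(A) \ar[r,"y(f)"] & y(B),
\end{tikzcd}
\]
in which $\bar{p}$ is a regular epimorphism, together with a comparison arrow $y(C) \to P$. Writing $P$ as a $\kappa$-filtered colimit of representables $y(P_i)$ (possible because $\Pres_{\kappa}(\Ind_{\kappa}(\CCC)) \simeq \CCC$) and using that $y(C)$ is $\kappa$-presentable, the comparison arrow factors through some $y(P_i)$. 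Applying \Cref{Makkai} to the arrow category $[\mathbf{2},\Ind_{\kappa}(\CCC)]$, the regular epi $\bar{p}$ onto the $\kappa$-presentable object $y(D)$ is itself a $\kappa$-filtered colimit of morphisms between $\kappa$-presentable objects, and since regular epimorphisms are stable under $\kappa$-filtered colimits (they are coequalizers) one checks that the index may be chosen so that $y(P_i) \to y(D)$ is itself a regular epimorphism. Full-faithfulness of $y$ then yields the required factorization $\bar{C} := P_i$ in $\CCC$ together with a regular epi $\bar{f}: \bar{C} \to D$.

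For $(1) \Rightarrow (2)$, the axioms (R1) and (R2) hold automatically because $\Ind_{\kappa}(\CCC)$ is locally $\kappa$-presentable, hence complete and cocomplete. For (R3), let $p: X \to Y$ be a regular epimorphism in $\Ind_{\kappa}(\CCC)$ and $h : Z \to Y$ any morphism. Using \Cref{Makkai} on the arrow category, express $p$ as a $\kappa$-filtered colimit of morphisms $p_i : X_i \to Y_i$ between representables, arranged so that the $p_i$ are (eventually) regular epimorphisms of $\CCC$; similarly approximate $h$. A pullback square in $\Ind_{\kappa}(\CCC)$ is the $\kappa$-filtered colimit of the pointwise pullbacks, since in $\Ind_{\kappa}(\CCC)$ finite limits commute with $\kappa$-filtered colimits. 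Each approximating square in $\CCC$ may then be completed, by weak regularity of $\CCC$, through a square whose bottom arrow is a regular epi; passing to the colimit produces the required regular epimorphism as pullback of $p$ along $h$.

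The main obstacle is the technical step in $(2) \Rightarrow (1)$ of reducing a regular epi $P \to y(D)$ in $\Ind_{\kappa}(\CCC)$, whose codomain is $\kappa$-presentable, to a regular epi in $\CCC$ at some stage of the $\kappa$-filtered presentation of $P$. This requires combining Makkai's theorem on arrow categories with the interaction between coequalizers and $\kappa$-filtered colimits, and is exactly the place where the generalization from $\aleph_0$ to $\kappa$ demands care: in the finitary case one uses finite limits commuting with filtered colimits, whereas here one needs $\kappa$-small limits commuting with $\kappa$-filtered colimits and the description $\Pres_{\kappa}([\mathbf{2}, \Ind_{\kappa}(\CCC)]) = [\mathbf{2}, \CCC]$ from \Cref{Makkai} to locate the approximating square inside $\CCC$.
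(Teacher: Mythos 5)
Your overall strategy (approximate everything by $\kappa$-filtered colimits of data in $\CCC$, using \Cref{Makkai}) is the same as the paper's, but the pivotal step is asserted rather than proved, and the justification you give for it is logically backwards. In both directions you need the statement: a regular epimorphism of $\Ind_{\kappa}(\CCC)$ can be written as a $\kappa$-filtered colimit (in the arrow category) of regular epimorphisms coming from $\CCC$. This is exactly the $\kappa$-ary version of \cite[Lem 12]{carbonipedicchiorosicky01} --- that the category of regular epimorphisms of $\Ind_{\kappa}(\CCC)$ is locally $\kappa$-presentable with $\kappa$-presentable objects the regular epimorphisms of $\CCC$ --- and it is the key lemma the paper isolates, deducing it from \Cref{Makkai} via the identification $[\III,\Lex_{\kappa}(\CCC^{\op},\Set)] \cong \Lex_{\kappa}([\III,\CCC^{\op}],\Set)$ (applied, in effect, to kernel-pair/coequalizer forks, not merely to arrows). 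Your argument ``regular epimorphisms are stable under $\kappa$-filtered colimits, hence the index may be chosen so that $y(P_i)\to y(D)$ is itself a regular epimorphism'' runs the implication the wrong way: closure of regular epis under filtered colimits says that a colimit of regular epis is a regular epi, not that the stages of an arbitrary presentation of a regular epi by morphisms between presentables are (eventually) regular epis. Indeed, \emph{every} morphism with presentable codomain is a $\kappa$-filtered colimit of morphisms between representables, so nothing about regularity can be read off from \Cref{Makkai} alone; the special presentation by regular epis of $\CCC$ has to be constructed (e.g.\ by approximating the kernel pair and taking coequalizers in $\CCC$). The same unproved ``arranged so that the $p_i$ are (eventually) regular epimorphisms of $\CCC$'' appears in your $(1)\Rightarrow(2)$.

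Even granting that lemma, your $(2)\Rightarrow(1)$ still has a gap at the codomain: the approximating regular epis have codomains $y(D_i)$ with $\operatorname{colim} y(D_i) \cong y(D)$, not codomain $y(D)$ itself, whereas weak regularity demands a regular epimorphism of $\CCC$ \emph{onto $D$} compatible with the maps to $A$ and $B$. Passing from ``$D$ is a retract of some $D_i$'' (which is what $\kappa$-presentability of $y(D)$ gives) to a regular epi onto $D$ in a category that has neither pullbacks nor any exactness assumptions is an extra step that your sketch does not address; this is part of what the diagram chase in \cite[Thm 14]{carbonipedicchiorosicky01}, to which the paper defers after establishing the key lemma, actually does. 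So the proposal identifies the right obstacle but does not overcome it: the crucial accessibility statement for regular epimorphisms must be proved, not inferred from stability under filtered colimits, and the factorization must land on $D$ itself.
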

\begin{proof}
	From \Cref{Makkai} above, one can easily deduce that, given a category $\III$ with a finite set of objects, one has that
	\begin{equation*}
	[\III,\Lex_{\kappa}(\CCC^{\op},\Set)] \cong \Lex_{\kappa}([\III,\CCC^{\op}],\Set). \footnote{Observe that actually more is true. If $\kappa > \aleph_0$ the equivalence holds for every category $\III$ with cardinality smaller than $\kappa$. Nonetheless, for our purposes is enough to consider $\III$ with a finite set of objects.}
	\end{equation*}
	From this fact, it is easily deduced that the category of regular epimorphisms of $\Ind_{\kappa}(\CCC) = \Lex_{\kappa}(\CCC^{\op},\Set)$ is a locally $\kappa$-presentable category presented by the category of regular epimorphisms in $\CCC^{\op}$, and this is just the $\kappa$-version of \cite[Lem 12]{carbonipedicchiorosicky01}. Once this is observed, the proof of the result is then essentially identical to the one provided in \cite{carbonipedicchiorosicky01}.
\end{proof}
\subsection{Exactness and pro-exact syntaxes}\label{exactness}
In \cite[\S9]{carbonipedicchiorosicky01} a syntactical characterization of exact locally finitely presentable categories is provided. In this subsection we present the generalization of this characterization to locally $\kappa$-presentable categories.

Recall (see for example \cite[\S9]{carbonipedicchiorosicky01}) that a \emph{graph} in a category $\CCC$ is a pair of morphisms in $\CCC$
\[
A_1 \overset{r_0}{\underset{r_1}{\rightrightarrows}}A_0
\]
where $A_1$ is the object of edges, $A_0$ is the object of vertices and $r_0$ and $r_1$ are respectively the source and the target morphisms.
A graph $r_0, r_1: A_1 \rightrightarrows A_0$ is called: 
\begin{itemize}
	\item \emph{reflexive} if there exists a morphism $d: A_0 \to A_1$ such that $r_0 d = r_1 d = \id_{A_0}$;
	\item \emph{symmetric} if there is a morphism $s: A_1 \to A_1$ such that $ss=\id_{A_1}$ and $r_0 s = r_1$ (and thus $r_1 s = r_0$);
	\item \emph{transitive} if the pullback $A_1 \times_{A_0} A_1$ exists in $\CCC$ and there is a morphism $t: A_1 \times_{A_0} A_1 \rightarrow A_1$ such that $r_1 \, q_1 = r_1 \, t$ and $r_0 \, q_0 = r_0 \, t$, where $q_0$ and $q_1$ are the natural projections
	\begin{equation*}
	\begin{tikzcd}
	A_1 \times_{A_0} A_1 \arrow[d, "q_0"] \arrow[r, "q_1"] & A_1 \arrow[d, "r_0"] \\
	A_1 \arrow[r, "r_1"] & A_0.
	\end{tikzcd}
	\end{equation*}
\end{itemize}
Given a finitely complete category $\CCC$, an \emph{internal equivalence relation} on an object $X \in \CCC$ (or a \emph{congruence} on $X$) is a reflexive, symmetric and transitive graph in $\CCC$ given by the composition $$R \subseteq X \times X \overset{\pi_0}{\underset{\pi_1}{\rightrightarrows}} X,$$ where $R$ is a subobject of $X \times X$ and $\pi_0, \pi_1: X \times X \rightarrow X$ are the projections.

Recall that a graph $s_0,s_1: B_1 \rightrightarrows B_0$ is an \emph{iteration} of a graph $r_0, r_1: A_1 \rightrightarrows A_0$ if $A_0 = B_0$ and there is a natural number $n$ and $(n+1)$ morphisms $g_0,\ldots, g_n: B_1 \rightarrow A_1$ such that any edge on $B_1$ is given by a path of length $n+1$ in $A_1$, that is, one has that
\[
\begin{aligned}
s_0 &= r_0 g_0,\\
r_1 g_0 &= r_0 g_1,\\
&\,\,\,\vdots\\
r_1 g_{n-1} &= r_0 g_n,\\
s_1 &= r_1 g_n.
\end{aligned}
\]

\begin{defn} \label{defexact}
	A category $\CCC$ is \textit{exact} when it is regular and every internal equivalence relation is effective, or equivalently, every congruence is a kernel pair.
\end{defn}
\begin{defn} \label{defproexact}
	Let $\CCC$ be a weakly regular category with coequalizers. We say $\CCC$ is \textit{pro-exact} when for any reflexive and symmetric graph $r_0, r_1: A_1 \rightrightarrows A_0$, any graph $t_0,t_1: C_1 \rightrightarrows A_0$ such that
	\[
	\coeq(r_0,r_1)t_0 = \coeq(r_0,r_1)t_1	
	\]
	factorizes through a quotient of an iteration of $r_0, r_1: A_1 \rightrightarrows A_0$, i.e. for all such graph $t_0,t_1: C_1 \rightrightarrows A_0$, there exists an iteration $s_0,s_1: B_1 \rightrightarrows A_0$ of $r_0, r_1: A_1 \rightrightarrows A_0$ and a commutative diagram of graphs
	\[
	\begin{tikzcd}[row sep=large, column sep = large]
	B_1 \arrow[r,shift left,"s_0"] \arrow[r,shift right,"s_1"'] \arrow[d,"u"'] &A_0\\
	D_1 \arrow[ru,shift left,"w_0"] \arrow[ru,shift right,"w_1"'] &C_1, \arrow[u,shift left,"t_0"] \arrow[u,shift right,"t_1"'] \arrow[l,"v"] 
	\end{tikzcd}
	\]
	where $u$ is a regular epimorphism.
\end{defn}

\begin{rem}\label{confused}
One should observe that exactness does not imply pro-exactness in general. As pointed out in \cite{carbonipedicchiorosicky01}, an exact category with coequalizers is pro-exact if and only if for any reflexive and symmetric relation, the equivalence relation generated by it is equal to a full iteration of itself for some $n$. Some explicit counterexamples can be found in \cite[\S2]{borceuxpedicchio99}. For example, the denumerable power of the category of finite sets is exact but is not pro-exact. 
\end{rem}

The following is the $\kappa$-ary generalization of \cite[Th. 19]{carbonipedicchiorosicky01}.
\begin{lem} \label{exact} Let $\CCC$ be a small category with $\kappa$-small colimits. The following are equivalent.
	\begin{enumerate}[label=\rm (\arabic*)]
		\item $\CCC$ is pro-exact. 
		\item $\Ind_{\kappa}(\CCC)$ is exact.
	\end{enumerate}
\end{lem}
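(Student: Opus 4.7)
The strategy is to mirror the argument of \cite[Thm 19]{carbonipedicchiorosicky01} closely, systematically replacing finite colimits by $\kappa$-small colimits and filtered colimits by $\kappa$-filtered colimits. The infrastructure needed for this transfer is already in place: by \Cref{regularweaklyregular} we know $\CCC$ is weakly regular iff $\Ind_{\kappa}(\CCC)$ is regular, and both pro-exactness and exactness are defined relative to these underlying regularity notions. Moreover, we have the usual exchange principles: the Yoneda embedding $h:\CCC\to\Ind_{\kappa}(\CCC)$ is fully faithful, it preserves $\kappa$-small colimits (by \cite[Prop 6.13]{kelly05}) as well as all limits that exist in $\CCC$, and its essential image is $\Pres_{\kappa}\Ind_{\kappa}(\CCC)$. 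Together with Makkai's \Cref{Makkai}, this lets us transport diagrammatic data (graphs, relations, iterations) back and forth between $\CCC$ and $\Ind_{\kappa}(\CCC)$.

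For the implication $(2)\Rightarrow(1)$, I would start with a reflexive symmetric graph $r_0,r_1:A_1\rightrightarrows A_0$ in $\CCC$ together with a pair $t_0,t_1:C_1\rightrightarrows A_0$ satisfying $\coeq(r_0,r_1)\, t_0 = \coeq(r_0,r_1)\, t_1$. Apply Yoneda to land in $\Ind_{\kappa}(\CCC)$, where the graph remains reflexive symmetric, and take the equivalence relation $E$ it generates. By exactness of $\Ind_{\kappa}(\CCC)$, $E$ is effective, i.e.\ it is the kernel pair of the coequalizer. Since $h(t_0),h(t_1)$ coequalize to the same map, the induced pair factors through $E$. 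The key geometric fact (which in the finitary case is essentially that the equivalence relation generated by a reflexive symmetric graph is the $\kappa$-filtered colimit of its iterations, all of which live among the $\kappa$-presentable objects) then lets us approximate this factorization by one passing through a specific iteration in $\CCC$; regularity of $\Ind_{\kappa}(\CCC)$ supplies the required regular epimorphism $u$, and representability of $\kappa$-presentable objects lets us pull the whole diagram back into $\CCC$.

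For $(1)\Rightarrow(2)$, take an internal equivalence relation $R\subseteq X\times X$ in $\Ind_{\kappa}(\CCC)$. Write $X$ as a $\kappa$-filtered colimit of representables and, using that $\kappa$-filtered colimits in $\Ind_{\kappa}(\CCC)$ are computed as in $[\CCC^{\opp},\Set]$, obtain a compatible presentation of $R$ as a $\kappa$-filtered colimit of reflexive symmetric graphs $A_1^{(i)}\rightrightarrows A_0^{(i)}$ inside $\CCC$. Pro-exactness of $\CCC$ controls how such graphs interact with their coequalizers via iterations and regular-epi quotients; passing this to the colimit in $\Ind_{\kappa}(\CCC)$ (and using regularity of $\Ind_{\kappa}(\CCC)$ from \Cref{regularweaklyregular}) shows that $R$ coincides with the kernel pair of its coequalizer, i.e.\ is effective.

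The main obstacle I expect is in the $(1)\Rightarrow(2)$ direction: carefully verifying that an equivalence relation in $\Ind_{\kappa}(\CCC)$ really does arise as a $\kappa$-filtered colimit of reflexive symmetric graphs in $\CCC$ compatibly with the projection structure, and that the pro-exactness condition, phrased in terms of iterations and quotients, is exactly what is needed to conclude effectiveness after taking the colimit. This is where one must most carefully check that the $\kappa$-ary adaptations do not break the finitary arguments, rather than in either of the formal manipulations with Yoneda or with the equivalence of categories $\Ind_{\kappa}(\CCC)\cong\Lex_{\kappa}(\CCC^{\opp},\Set)$.
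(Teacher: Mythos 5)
Your plan is essentially the paper's proof: the paper simply notes that, armed with \Cref{Makkai} and \Cref{regularweaklyregular}, the finitary argument of \cite[Thm 19]{carbonipedicchiorosicky01} carries over essentially verbatim once finite colimits and filtered colimits are replaced by their $\kappa$-ary counterparts, which is exactly the strategy you describe. Your more detailed sketch of the two directions is consistent with that finitary argument; just note, when writing it up, that the iterations of a graph form only a countable chain (so an $\aleph_0$-filtered but not $\kappa$-filtered diagram when $\kappa>\aleph_0$), which is precisely the sort of $\kappa$-ary adjustment you already flag as requiring care.
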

\begin{proof}
	The proof, based on \Cref{Makkai} and \Cref{regularweaklyregular} above, is essentially identical to the proof of the finitary case as presented in \cite{carbonipedicchiorosicky01}.
\end{proof}

\subsection{Characterization of Grothendieck topoi amongst locally presentable categories}\label{characterizationGrothendieck}

In this subsection we can finally provide a proof of \Cref{characterizationgeneralization}. 
\begin{thm}\label{characterizationkappa}
	Let $\kappa$ be regular cardinal and let $\CCC$ be a small category closed under $\kappa$-small colimits. The following are equivalent:
	\begin{enumerate}[label=\rm (\arabic*)]
		\item $\CCC$ is $\kappa$-extensive and pro-exact.
		\item $\Ind_{\kappa}(\CCC)$ is a Grothendieck topos.
	\end{enumerate}
	A locally $\kappa$-presentable category $\ca$ is hence a Grothendieck topos if and only if its full category of $\kappa$-presentable objects $\Pres_{\kappa}\ca$ is $\kappa$-extensive and pro-exact.
	\begin{proof}
		The statement follows directly from \Cref{Giraud} together with \Cref{extensive} and \Cref{exact}.
	\end{proof}
\end{thm}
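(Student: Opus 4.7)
The plan is to obtain the theorem as a direct assembly of the three preceding pieces: Giraud's characterization (\Cref{Giraud}), the $\kappa$-extensive/infinitary extensive correspondence (\Cref{extensive}), and the pro-exact/exact correspondence (\Cref{exact}). The strategy is to apply Giraud's theorem to the candidate $\Ind_{\kappa}(\CCC)$ and then translate each of its three defining properties (exactness, infinitary extensivity, and the existence of a generating set) into a condition on the small category $\CCC$.

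The first observation I would make is that, independently of any hypothesis on $\CCC$, the category $\Ind_{\kappa}(\CCC)$ is locally $\kappa$-presentable by construction (indeed $\Ind_{\kappa}(\CCC) \simeq \Lex_{\kappa}(\CCC^{\op},\Set)$, which is a standard presentation of locally $\kappa$-presentable categories). In particular $\Ind_{\kappa}(\CCC)$ always admits a (strong) generator, namely its essentially small subcategory of $\kappa$-presentable objects. Thus by \Cref{Giraud}, $\Ind_{\kappa}(\CCC)$ is a Grothendieck topos if and only if it is exact and infinitary extensive. Having reduced the problem to these two properties, I would finish the argument in two steps.

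First, I would invoke \Cref{extensive}: the equivalence (1)$\Leftrightarrow$(2) there tells us that infinitary extensivity of $\Ind_{\kappa}(\CCC)$ is equivalent to $\kappa$-extensivity of $\CCC$. Second, I would invoke \Cref{exact}, which gives the equivalence between exactness of $\Ind_{\kappa}(\CCC)$ and pro-exactness of $\CCC$. Combining these two equivalences with the reduction provided by Giraud, we obtain that $\Ind_{\kappa}(\CCC)$ is a Grothendieck topos if and only if $\CCC$ is both $\kappa$-extensive and pro-exact, which is the desired equivalence (1)$\Leftrightarrow$(2).

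The final statement about locally $\kappa$-presentable categories $\ca$ follows immediately: since Gabriel–Ulmer duality (\Cref{GU}) identifies $\ca$ with $\Ind_{\kappa}(\Pres_{\kappa}\ca)$, applying the equivalence just proven to $\CCC = \Pres_{\kappa}\ca$ gives the characterization. The main obstacle in this proof has already been absorbed into the preceding lemmas, namely \Cref{extensive} and \Cref{exact}; at this point the argument is essentially a bookkeeping assembly, and the only subtlety to be careful about is that \Cref{exact} implicitly carries along weak regularity (via \Cref{regularweaklyregular}), but this is automatic once pro-exactness is assumed, so no extra hypothesis needs to be added.
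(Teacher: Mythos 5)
Your proposal is correct and follows exactly the route of the paper, which proves \Cref{characterizationkappa} by combining \Cref{Giraud} with \Cref{extensive} and \Cref{exact} (the generator being automatic since $\Ind_{\kappa}(\CCC)$ is locally $\kappa$-presentable, and the final statement following from Gabriel--Ulmer duality applied to $\CCC=\Pres_{\kappa}\ca$). Your added remark that weak regularity is built into the definition of pro-exactness, so no extra hypothesis is needed, is a fair and accurate elaboration of the same argument.
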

This result motivates us to introduce the following:
\begin{defn}\label{prototopos}
	Let $\kappa$ be a regular cardinal. A category $\CCC$ closed under $\kappa$-small colimits is called a \emph{$\kappa$-prototopos} if $\CCC$ is $\kappa$-extensive and pro-exact. 
\end{defn}
\begin{rem}
	The definition of $\aleph_0$-prototopos looks like a weakening of the notion of pretopos (a \emph{pretopos} is an exact and extensive category). One might think that exact $\aleph_0$-prototopoi are pretopoi. This is not true in general in view of \Cref{confused}.
\end{rem}
\begin{exa} A beautiful example of this machinery is the well understood topos $\Set$. The category $\Set$ corresponds to $\Ind_{\kappa}(\Set_{\kappa})$, where by $\Set_{\kappa}$ we denote the category of $\kappa$-small sets. We thus have that $\Set_{\kappa}$ is a $\kappa$-prototopos, it is even an elementary topos if $\kappa$ is strongly inaccessible (see \cite[Ex A2.1.2]{johnstone02-SE1}).  
\end{exa}

\section{Prototopoi-topoi duality}\label{dualitysection}
In order to obtain a restriction to Grothendieck topoi of Gabriel-Ulmer duality, we could just make use of \Cref{characterizationkappa} above, which yields a restriction at the level of objects, and consider the correspoding full sub-$2$-category of $\kappa$-prototopoi in $\Rex_{\kappa}$ and that of $\kappa$-presentable Grothendieck topoi in $\catpresl$. Observe, nonetheless, that the natural $2$-category of locally $\kappa$-presentable Grothen\-dieck topoi to consider if we want to condense both the topos theoretic and the semantic perspectives, should not have as $1$-cells all the pairs of adjoint functors whose right adjoint is $\kappa$-accessible (the relevant morphisms from the syntax-semantic point of view, as shown by Gabriel-Ulmer duality), but only those who are also geometric morphisms (the relevant morphisms to consider in topos theory). 
The main goal of this section is to investigate how we can specialize Gabriel-Ulmer duality at the level of $1$-cells as well, so that we obtain a semantic-topos theoretical duality.  

\subsection{The pseudofunctor $\Ind_{\kappa}$}
As explained in \S\ref{GUduality}, Gabriel-Ulmer duality is provided by the pseudofunctor 
\[\Ind_{\kappa}: \Rex_{\kappa}^{\coop} \rightarrow \catpresl,\]
consisting in taking the $\Ind_{\kappa}$-completion at the level of $0$-cells, which is a locally $\kappa$-presentable category.
In order to understand how we can suitably restrict this pseudofunctor to our case of interest, we first revise its definition and summarize its properties as they were already presented in \S\ref{GUduality} above.

Given two small $\kappa$-cocomplete categories $\AAA$ and $\BBB$ and a $\kappa$-small colimit preserving functor $f: \AAA \rightarrow \BBB$, $\Ind_{\kappa}(f)$ is the adjunction $S(f) \dashv Z(f)$, where 
\begin{itemize}
	\item the left adjoint $S(f)$ is given by the left Kan extension $\text{Lan}_{\alpha_{\AAA}}(\alpha_{\BBB} f)$, and hence we have that the diagram
	\begin{equation}
	\vcenter{\xymatrix{
			\AAA \ar[d]^{ \alpha_{\AAA}}\ar[r]^f  & \BBB \ar[d]_{ \alpha_{\BBB}}\\
			\Ind_{\kappa} (\AAA) \ar[r]^{S(f)}  &\Ind_{\kappa} (\BBB),
	}}
	\end{equation}
	is commutative, where $\alpha_{\AAA}:\AAA \rightarrow \Ind_{\kappa}(\AAA)$ (resp. $\alpha_{\BBB}:\BBB \rightarrow \Ind_{\kappa}(\BBB)$) denotes the corestriction of the Yoneda embedding; 
	\item the right adjoint $Z(f)$ is given by the restriction to $\Ind_{\kappa}(\DDD)$ of the restriction of scalars functor $f^*: [\DDD^{\op},\Set] \to [\CCC^{\op}, \Set]$, that is, we have the commutative diagram
	\begin{equation}
	\begin{tikzcd}
	\Ind_{\kappa} (\AAA) \ar[d,hook] & \Ind_{\kappa} (\AAA) \ar[d,hook] \ar[l,"Z(f)"] \\
	\left[ \AAA^{\op},\Set \right]  &\left[ \BBB^{\op},\Set\right]; \ar[l,"f^*"]
	\end{tikzcd}
	\end{equation}
	\item $S(f)$ is cocontinuous and preserves $\kappa$-presentable objects, which implies that it is \emph{strongly $\kappa$-accessible} (i.e. it is $\kappa$-accessible and preserves $\kappa$-presentable objects).
\end{itemize}  

\subsection{The suitable $2$-categories}
As already mentioned above, our category of interest is the $2$-category $\Grtopoi_{\kappa}$ of locally $\kappa$-presentable Grothendieck topoi defined as follows:
\begin{itemize}
	[leftmargin=+.6in]
	\item[\textbf{$0$-cells}] are the Grothendieck topoi which are locally $\kappa$-presentable;
	\item[\textbf{$1$-cells}] are the geometric morphisms whose right adjoints are $\kappa$-accessible, the direction of which determined by the direction of the right adjoint;
	\item[\textbf{$2$-cells}] are the natural transformations between the right adjoints.
\end{itemize}
\begin{rem}
	$2$-categories of Grothendieck topoi can be often encountered in the literature with a different choice of $2$-cells, namely the natural transformations between the left adjoints. In this paper we stick to the choice of $2$-cells from the classical \cite{SGA4-1}.
\end{rem}

In order to find an appropriate $2$-category of $\kappa$-prototopoi such that the pseudofunctor $\Ind_{\kappa}$ takes values in $\Grtopoi_{\kappa}$, we have to choose as $1$-cells the $\kappa$-small colimit preserving functors $f: \AAA \rightarrow \BBB$ between $\kappa$-prototopoi for which $\Ind_{\kappa}(f)$ with $\kappa$-accessible right adjoint is in addition a geometric morphism, that is, the left adjoint $S(f)$ is left exact. This is a flatness-type property that we encode in the following definition:
\begin{defn}\label{indkappaflat}
	A functor $f: \AAA \to \BBB$ between small categories is called $\Ind_{\kappa}$-flat if $S(f)$ preserves finite limits, or equivalently, if $\text{Lan}_{\alpha_{\AAA}}(\alpha_{\BBB} f)$ preserves finite limits.
\end{defn}

We hence introduce the $2$-category $\Prototopoi_{\kappa}$ of $\kappa$-prototopoi as follows:
\begin{itemize}
	[leftmargin=+.6in]
\item[\textbf{$0$-cells}] are $\kappa$-prototopoi, that is $\kappa$-extensive, pro-exact, small categories with $\kappa$-small colimits;
\item[\textbf{$1$-cells}] are the $\Ind_{\kappa}$-flat functors preserving $\kappa$-small colimits;
\item[\textbf{$2$-cells}] are the natural transformations.
\end{itemize}

Observe that $\Ind_{\kappa}$-flatness has been introduced as an \emph{ad hoc} definition in order to obtain our desired result. We will see in \Cref{cover flat} that there is a natural topological interpretation of this notion. For the moment, let's just analyse its relation of the classical notion of flatness. Recall that a functor $f: \AAA \to \BBB$ is said to be \emph{flat} if and only if the left Kan extension $\Lan_f \coloneqq \Lan_{y_{\AAA}} (y_{\BBB}f): [\AAA^{\op},\Set] \rightarrow [\BBB^{\op},\Set]$ preserves finite limits. It is easy to see that $\Lan_f$ is the left adjoint of the restriction of scalars functor $f^*:[\BBB^{\op},\Set] \rightarrow [\AAA^{\op},\Set]$. The following shows that for a $\kappa$-small colimit preserving functor between $\kappa$-prototopoi, the notion of $\Ind_{\kappa}$-flatness is weaker than the notion of flatness.
\begin{prop} \label{flatnesscomparison}
	Let  $f: \AAA \to \BBB$ be a flat functor that preserves $\kappa$-small colimits between $\kappa$-prototopoi. Then the  adjunction $\Ind_{\kappa}(f)$ given by $S(f): \Ind_{\kappa}(\AAA) \rightleftarrows \Ind_{\kappa}(\BBB): Z(f)$ is a geometric morphism whose right adjoint is accessible.
\end{prop}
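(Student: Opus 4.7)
The plan is to show that $S(f)$ preserves finite limits, i.e.\ that $f$ is $\Ind_\kappa$-flat; accessibility of $Z(f)$ is already granted by the Gabriel--Ulmer construction recalled in \S\ref{GUduality}, since the commutative square $S(f) \circ \alpha_\AAA \cong \alpha_\BBB \circ f$ forces $S(f)$ to send representables to representables and hence to preserve $\kappa$-presentable objects, making $Z(f)$ even $\kappa$-accessible.

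The strategy is to realise $S(f)$ as the restriction of the usual Kan extension $\Lan_f \colon [\AAA^{\op}, \Set] \to [\BBB^{\op}, \Set]$ to the $\Ind_\kappa$-subcategories. Let $\iota_\AAA \colon \Ind_\kappa(\AAA) \hookrightarrow [\AAA^{\op}, \Set]$ and $\iota_\BBB \colon \Ind_\kappa(\BBB) \hookrightarrow [\BBB^{\op}, \Set]$ be the inclusions, with reflective left adjoints $L_\AAA$ and $L_\BBB$. A formal adjoint-calculus argument, applying $\Lan_f \dashv f^*$ to the identity $f^* \circ \iota_\BBB = \iota_\AAA \circ Z(f)$ that defines $Z(f)$, identifies $S(f)$ with $L_\BBB \circ \Lan_f \circ \iota_\AAA$ (which can also be verified directly using that $S(f)$ is the cocontinuous extension of $\alpha_\BBB f$ along $\alpha_\AAA$, by checking the value on representables).

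The crucial step is to observe that $\Lan_f \circ \iota_\AAA$ already factors through $\iota_\BBB(\Ind_\kappa(\BBB))$, so that applying $L_\BBB$ is essentially harmless. Indeed, every $F \in \Ind_\kappa(\AAA)$ can be written as a $\kappa$-filtered colimit of representables $F \cong \colim_i \alpha_\AAA(A_i)$; since $\Ind_\kappa(\AAA)$ is closed under $\kappa$-filtered colimits in the presheaf category, $\iota_\AAA F$ is the same colimit computed in $[\AAA^{\op}, \Set]$, and cocontinuity of $\Lan_f$ together with $\Lan_f (y_\AAA A) \cong y_\BBB (f A)$ give $\Lan_f(\iota_\AAA F) \cong \colim_i y_\BBB(f(A_i))$, a $\kappa$-filtered colimit of representables in $[\BBB^{\op}, \Set]$, which therefore lies in $\Ind_\kappa(\BBB)$. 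Hence $L_\BBB$ acts as an equivalence on the essential image of $\Lan_f \circ \iota_\AAA$, and $S(f)$ is naturally isomorphic to the corestriction of $\Lan_f|_{\Ind_\kappa(\AAA)}$ to $\Ind_\kappa(\BBB)$.

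Left-exactness of $S(f)$ then follows immediately: finite limits in $\Ind_\kappa(\AAA)$ and $\Ind_\kappa(\BBB)$ are both computed as in the ambient presheaf categories (the inclusions $\iota_\AAA, \iota_\BBB$ preserve all limits), and $\Lan_f$ preserves finite limits by the flatness hypothesis on $f$. The main obstacle I expect is precisely the bridging observation that $\Lan_f$ respects the $\Ind_\kappa$-subcategories, which is specific to the closure of $\Ind_\kappa$ under $\kappa$-filtered colimits of representables and would fail for a generic reflective subcategory of a presheaf topos; once this is secured, the passage from flatness of $f$ to left-exactness of $S(f)$ is formal.
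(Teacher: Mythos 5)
Your argument is correct, and it reaches the conclusion by a genuinely different route in the key step. Both you and the paper first identify $S(f)$ with $L_{\BBB} \, \Lan_f \, \iota_{\AAA}$ (the paper via a density/Kan-extension computation, you via composing the adjunctions $\Lan_f \dashv f^*$ and $L_{\BBB} \dashv \iota_{\BBB}$ against the defining identity $f^*\iota_{\BBB} = \iota_{\AAA} Z(f)$; both are fine). The paper then concludes by invoking left-exactness of the reflection $L_{\BBB}$, which it gets from the fact that $\Ind_{\kappa}(\BBB)$ is a Grothendieck topos, i.e.\ from the prototopos hypothesis on $\BBB$ via \Cref{characterizationkappa}. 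You instead show that $L_{\BBB}$ is never needed: since $\Lan_f$ is cocontinuous and sends representables to representables, and since $\Ind_{\kappa}(\AAA)$ and $\Ind_{\kappa}(\BBB)$ are exactly the closures of the representables under $\kappa$-filtered colimits, computed pointwise in the presheaf categories, $\Lan_f$ already restricts and corestricts to the $\Ind_{\kappa}$-subcategories; left-exactness of $S(f)$ then follows from flatness of $f$ together with the fact that $\iota_{\AAA}$ preserves and $\iota_{\BBB}$ reflects finite limits. What your version buys is generality and economy: it shows that any flat, $\kappa$-small-colimit-preserving functor between arbitrary small $\kappa$-cocomplete categories is $\Ind_{\kappa}$-flat, with no prototopos hypothesis used for left-exactness (the prototopos assumption is only needed so that the two $\Ind_{\kappa}$-completions are topoi and the adjunction deserves the name ``geometric morphism''). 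What the paper's version buys is that the formula $S(f)\,L_{\AAA} \cong L_{\BBB}\,\Lan_f$ and the left-exact sheafification $L_{\BBB}$ are set up once and reused later (notably in \Cref{cover flat}), so its argument stays inside the sheaf-theoretic framework that the rest of the paper exploits. Your treatment of accessibility of $Z(f)$ by deferring to the Gabriel--Ulmer construction of \S\ref{GUduality} matches the paper's.
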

\begin{proof}
	As a consequence of \Cref{GU} we only have to prove that  $S(f)$ is left exact, or equivalently that $f$ is $\Ind_{\kappa}$-flat.
	By \Cref{characterizationkappa}, $\Ind_{\kappa}(\AAA)$ is a Grothendieck topos which can be recovered as the category of sheaves of the site given by the $\kappa$-extensive topology on $\AAA$. Consequently, we have that the natural embedding $i_{\AAA}: \Ind_{\kappa}(\AAA) \hookrightarrow [\AAA^{\op},\Set]$ has a left adjoint $L_{\AAA}: [\AAA^{\op},\Set] \rightarrow \Ind_{\kappa}(\AAA)$ which is left exact. We have the following diagram of functors
	%
	\[\vcenter{\xymatrix{
			\AAA \ar[dr]^{ \alpha_{\AAA}}\ar[rrr]^f \ar[dd]_{y_{\AAA}} &&& \BBB \ar[dd]^{y_{\BBB}}\ar[dl]_{ \alpha_{\BBB}}\\
			& \Ind_k (\AAA) \ar@<2.5pt>[dl]^{i_{\AAA}} \ar@<2.5pt>[r]^{ S(f)}&\ar@{>}@<2.5pt>[l]^{ Z(f)}  \Ind_k (\BBB) \ar@<-2.5pt>[dr]_{i_{\BBB}} & \\
			[\AAA^{\op},\Set] \ar@<2.5pt>[ur]^{L_{\AAA}} \ar@<2.5pt>[rrr]^{\text{Lan}_f}&&&\ar@<2.5pt>[lll]^{f^*} \ar@<-2.5pt>[ul]_{L_{\BBB}}   [\BBB^{\op},\Set],
	}}
	\]
	Observe that 
	\begin{equation}
	S(f)  \, L_{\AAA} \cong  L_{\BBB} \, \text{Lan}_f.
	\end{equation}
	Indeed, this follows from:
	\begin{align*}
	S(f) \, L_{\AAA} &\cong \text{Lan}_{y_{\AAA}}(\alpha_{\BBB}f) \\
	& \cong \text{Lan}_{y_{\AAA}}(L_{\BBB}y_{\BBB}f) \\
	& \cong L_{\BBB} \, \text{Lan}_{y_{\AAA}}(y_{\BBB}f) \\
	& \cong  L_{\BBB} \, \text{Lan}_f , \\
	\end{align*}
	where the nontrivial isomorphism $S(f) \, L_{\AAA} \cong \text{Lan}_{y_{\AAA}}(\alpha_{\BBB}f)$ holds because $S(f) \, L_{\AAA}$ is cocontinous and the Yoneda embedding $y_{\AAA}$ is a dense functor, and thus \[ S(f) \, L_{\AAA} \cong  \text{Lan}_{y_{\AAA}}(S(f)L_{\AAA}y_{\AAA}) \cong \text{Lan}_{y_{\AAA}}(S(f)\alpha_{\AAA}) \cong  \text{Lan}_{y_{\AAA}}(\alpha_{\BBB}f).\]
	Consequently, we have that 
	\begin{equation}\label{Sfpresheaves}
	S(f) \cong S(f)  \, L_{\AAA} \, i_{\AAA} \cong L_{\BBB} \, \text{Lan}_f \, i_{\AAA}.
	\end{equation}
	Now observe that $i_{\AAA}$ is a right adjoint and thus preserves all limits, $\text{Lan}_f$ preserves finite limits because $f$ is flat, and the functor $L_{\BBB}$ preserves finite limits by the discussion above. We can thus conclude that $S(f)$ preserves finite limits as desired.
\end{proof}
\begin{rem}
	As just shown, flat $\kappa$-small colimit preserving functors between $\kappa$-prototopoi produce under the pseudofunctor $\Ind_{\kappa}$ geometric morphisms with $\kappa$-accessible right adjoints. One could hope that given a geometric morphism with $\kappa$-accessible right adjoint $L: \Ind_{\kappa}(\AAA) \rightleftarrows \Ind_{\kappa}(\BBB): R$ the restriction $f$ of the left adjoint $L$ to the $\kappa$-presentable objects would be flat. If we could infer from the left-exactness of $L$, which equals $S(f)$, the left-exactness of $\text{Lan}_f$, the proof would be complete. Unfortunately, this implication is not true in general. This shows that the the notion of flatness for a $\kappa$-small colimit preserving functors between $\kappa$-prototopoi is strictly stronger than the notion of $\Ind_{\kappa}$-flatness, which is the one required to obtain the desired Gabriel-Ulmer for locally $\kappa$-presentable Grothendieck topoi.
\end{rem}

\subsection{Gabriel-Ulmer duality for Grothendieck topoi}
We are finally able to provide the desired restriction of Gabriel-Ulmer duality. 
\begin{thm}\label{1} Let $\kappa$ be a regular cardinal.
There is a biequivalence of categories $$\Ind_{\kappa}: \Prototopoi_\kappa^{\coop} \rightleftarrows  \Grtopoi_{\kappa} : \Pres_{\kappa}. $$ 
\end{thm}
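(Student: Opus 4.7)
The plan is to obtain the desired biequivalence simply as a restriction of the classical Gabriel--Ulmer biequivalence $\Ind_\kappa: \Rex_\kappa^{\coop} \rightleftarrows \catpresl : \Pres_\kappa$ of \Cref{GU}. Since $\Prototopoi_\kappa$ is a locally full sub-$2$-category of $\Rex_\kappa$ and $\Grtopoi_\kappa$ is a locally full sub-$2$-category of $\catpresl$ (both sharing all natural transformations as their $2$-cells), the corresponding conjugate-opposites yield the matching inclusion $\Prototopoi_\kappa^{\coop} \hookrightarrow \Rex_\kappa^{\coop}$. It thus suffices to check that $\Ind_\kappa$ carries $0$-cells and $1$-cells of $\Prototopoi_\kappa^{\coop}$ into $\Grtopoi_\kappa$ and dually for $\Pres_\kappa$; the unit and counit isomorphisms of the original biequivalence will then restrict to pseudonatural equivalences witnessing the biequivalence of the sub-$2$-categories.

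The restriction at the level of $0$-cells is precisely the content of \Cref{characterizationkappa}: $\Ind_\kappa(\CCC)$ is a Grothendieck topos exactly when the small $\kappa$-cocomplete category $\CCC$ is a $\kappa$-prototopos, and the resulting topos is automatically locally $\kappa$-presentable. Conversely, if $\ca$ is a locally $\kappa$-presentable Grothendieck topos, then $\Pres_\kappa(\ca)$ is a $\kappa$-prototopos by the same theorem.

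The restriction at the level of $1$-cells proceeds in both directions. If $f: \AAA \to \BBB$ is a $1$-cell in $\Prototopoi_\kappa$, then $\Ind_\kappa(f) = (S(f) \dashv Z(f))$ is an adjunction with $\kappa$-accessible right adjoint by \Cref{GU}, and $S(f)$ preserves finite limits by the very definition of $\Ind_\kappa$-flatness (\Cref{indkappaflat}); hence $\Ind_\kappa(f)$ is a geometric morphism, and thus a $1$-cell in $\Grtopoi_\kappa$. Conversely, given a geometric morphism $L \dashv R : \ca \rightleftarrows \cb$ in $\Grtopoi_\kappa$ with $R$ $\kappa$-accessible, its $\Pres_\kappa$-image is the restriction $f := L|_{\Pres_\kappa(\ca)} : \Pres_\kappa(\ca) \to \Pres_\kappa(\cb)$, which preserves $\kappa$-small colimits. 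Because $\Ind_\kappa$ and $\Pres_\kappa$ are quasi-inverses, the counit of \Cref{GU} provides a natural isomorphism $S(f) \cong L$; since $L$ is left exact as part of a geometric morphism, so is $S(f)$, meaning $f$ is $\Ind_\kappa$-flat and hence a $1$-cell in $\Prototopoi_\kappa$.

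For $2$-cells there is nothing left to check, as they are the same natural transformations on both sides. The only potential ``obstacle'', conceptual rather than technical, is that the definition of $\Ind_\kappa$-flatness in \Cref{indkappaflat} was introduced precisely so as to make this $1$-cell matching tautological; once this design choice is in place and the characterization of \Cref{characterizationkappa} is available at the level of $0$-cells, the restricted biequivalence follows at once.
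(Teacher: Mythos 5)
Your proposal is correct and follows essentially the same route as the paper: restrict the Gabriel--Ulmer biequivalence of \Cref{GU}, using \Cref{characterizationkappa} to match $0$-cells and the very definition of $\Ind_{\kappa}$-flatness (\Cref{indkappaflat}) to match $1$-cells with geometric morphisms having $\kappa$-accessible right adjoints. Your write-up merely spells out both directions of the $1$-cell correspondence (including transporting left exactness of $L$ to $S(f)$ along the counit equivalence), which the paper leaves implicit.
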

\begin{proof}
This is a direct consequence of \Cref{characterizationkappa} and Gabriel-Ulmer duality (\Cref{GU}), after observing that the property of $1$-cells being $\Ind_{\kappa}$-flat at the syntax side corresponds under the duality to $1$-cells with a left exact left adjoint at the semantics side, which is a direct consequence of the definition of $\Ind_{\kappa}$-flatness (\Cref{indkappaflat}).
\end{proof}

\section{Cofinality}\label{cofinality}
While the prototopoi-topoi duality from \Cref{1} above provides a good understanding of the $2$-category $\Grtopoi_{\kappa}$ for any $\kappa \in \RegCard$, from a topos theoretical point of view our interest is still focused on the bigger $2$-category $\Grtopoi$ of Grothendieck topoi with geometric morphisms and natural transformations between the right adjoints. In this regard, the family of $2$-categories $\{\Grtopoi_{\kappa}\}_{\kappa \in \RegCard}$ already provides useful information to understand $\Grtopoi$, as it is a \textit{cofinal} family in $\Grtopoi$. This is, given any geometric morphism \[(f: \ca \rightleftarrows \cb: g) \in \Grtopoi(\cb, \ca)\] 
we have that there exists a regular cardinal $\kappa$ such that $(f \dashv g) \in \Grtopoi_{\kappa}(\cb, \ca)$ (see \cite[Prop. 2.23]{adamekrosicky94}). The same statement stays true when replacing ``geometric morphism'' with ``small diagram'' because of the remark under \cite[Prop. 2.19]{adamekrosicky94}.
\begin{rem}\label{essentiallysmall}
	Based on this observation, one might be tempted to think that given two Grothendieck topoi $\ca, \cb$ there exists a regular cardinal $\kappa$ such that $\Grtopoi(\cb, \ca) = \Grtopoi_{\kappa}(\cb, \ca).$ This is actually not true; while by \Cref{1} we have that $\Grtopoi_{\kappa}(\cb, \ca) \cong \Prototopoi_{\kappa}^{\coop}(\Pres_{\kappa}(\cb),\Pres_{\kappa}(\ca))$ is always essentially small, $\Grtopoi(\cb, \ca)$ is not essentially small in general. For example, we have that $\Grtopoi(\Set, [C^{\op},\Set]) \cong \Ind(C)$. 
\end{rem}
Observe that, in any case, we do have natural embeddings $\Grtopoi_{\alpha} \subseteq \Grtopoi_{\beta}$ for any $\alpha \leq \beta$ regular cardinals, such that
\begin{equation}
	\Grtopoi = \bigcup_{\kappa \in \RegCard}\Grtopoi_{\kappa}.
\end{equation}
Calling \textit{height} of a Grothendieck topos $\cg$ the smallest cardinal $\kappa$ such that  $\cg$ is locally $\kappa$-presentable, it would be very interesting to have a criterion that detects the height of a Grothendieck topos in this directed chain of embeddings. In what follows we point out a way of detecting an upper bound for the height.
\begin{defn}\label{defnsize}
	Let $\CCC$ be a small category, $\tau$ a Grothendieck topology on $\CCC$ and $\kappa$ a regular cardinal. We say that $\tau$ is a \emph{weakly $\kappa$-ary Grothendieck topology} if every covering sieve in $\tau$ contains a $\kappa$-small generating family. Then we say that $(\CCC, \tau)$ is a \emph{weakly $\kappa$-ary Grothendieck site}.
\end{defn}
\begin{rem}\label{weaklykary}
	 We borrow this terminology from \cite{shulman12}. More concretely, we say that a Grothendieck topology in $\CCC$ is a weakly $\kappa$-ary Grothendieck topology if it is the Grothendieck topology associated via \cite[Prop. 3.4]{shulman12} to a weakly $\kappa$-ary topology in $\CCC$ in the sense of \cite[Def. 3.1]{shulman12}. As throughout the whole paper we only work with Grothendieck topologies and sites there is no risk of confusion in dropping ``Grothendieck'' from ``weakly $\kappa$-ary Grothendieck topology'' and ``weakly $\kappa$-ary Grothendieck site'', and hence we will do so for the sake of brevity.  
\end{rem}
\begin{rem}\label{coverage}
	Observe that a site $(\CCC,\tau)$ is weakly $\kappa$-ary if and only if the $\kappa$-small covering families form a coverage in the sense of \cite[Def. C2.1.1]{johnstone02-SE2} determining $\tau$, i.e. the smallest Grothendieck topology containing this coverage is $\tau$.
\end{rem}
\begin{prop}\label{criterion} Let $\cg$ be a Grothendieck topos. Then the following are equivalent:
\begin{enumerate}[label=\rm (\arabic*)]
\item $\cg$ is $\kappa$-locally presentable.
\item There exists a weakly $\kappa$-ary site $(\CCC,\tau)$ such that $\Sh(\CCC,\tau) \cong \cg$.
\end{enumerate}
\begin{proof}
	\begin{itemize}
		\setlength{\itemindent}{1em}
		\item[ ] $\,$
		\item[$(1) \Rightarrow (2)$] As a direct consequence of the Lemme de comparaison \cite[Expos\'e iii, Thm 4.1]{SGA4-1}, we know that $\cg$ can be recovered, up to equivalence, as $\Sh(\Pres_{\kappa}(\cg),\tau)$, where $\tau$ is the Grothendieck topology induced by the canonical topology in $\cg$ via the dense embedding $\iota: \Pres_{\kappa}(\cg) \hookrightarrow \cg$. Observe that $\tau$ is a subcanonical topology and that covering families in $\tau$ are the families $\{A_i \rightarrow A\}_{i\in I}$ such that $\{h_{A_i}=\iota(A_i) \rightarrow h_A=\iota(A)\}_{i\in I}$ is jointly-epimorphic in $\Sh(\Pres_{\kappa}(\cg),\tau) \cong \cg$. Due to the fact that arbitrary coproducts of representables in $\Sh(\Pres_{\kappa}(\cg),\tau)$ can be written as $\kappa$-filtered colimits of $\kappa$-small coproducts of representables, one can easily conclude that every covering sieve of $\tau$ is contains a $\kappa$-small covering family. 
		\item[$(2) \Rightarrow (1)$] Let $(\CCC, \tau)$ a weakly $\kappa$-ary site of definition of $\cg$. By \Cref{coverage} we know that the $\kappa$-small covering families form a coverage determining $\tau$, and hence, an element $F \in [\CCC^{\op},\Set]$ belongs to $\Sh(\CCC,\tau)$ if and only if, for every $C \in \CCC$ and for every $\kappa$-small covering family $\{f_i: C_i \rightarrow C\}$, we have that $F(C) = \lim F(C_i)$ (see \cite[Def. C2.1.2]{johnstone02-SE2}). As in the proof of \cite[Prop. 5.2.1]{makkaipare89}, we can consider the sketch $\cs$ in the category $\CCC^{\op}$ with the set of limit cones given by the $\kappa$-small covering families $\{f_i^{\op}: C \rightarrow C_i\}$ and the set of colimit cocones empty. Then it is immediate by the characterization of sheaves just provided, that $\Sh(\CCC,\tau)$ is equivalent to the category $\Mod \cs$ of models of the limit sketch $\cs$. Then, by \cite[Cor. 1.52 + Rem]{adamekrosicky94} or \cite[Thm D2.3.6]{johnstone02-SE2}, we can conclude that $\Sh(\CCC,\tau)$ is locally $\kappa$-presentable. 
	\end{itemize}
\end{proof}
\end{prop}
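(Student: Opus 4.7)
The plan is to prove both implications separately, each by choosing a natural candidate presentation and invoking standard machinery. For the direction $(1) \Rightarrow (2)$, I would take the site to be $(\Pres_{\kappa}(\cg), \tau)$ where $\tau$ is the topology on $\Pres_{\kappa}(\cg)$ induced by the canonical topology on $\cg$ via the inclusion $\iota: \Pres_{\kappa}(\cg) \hookrightarrow \cg$. Since $\Pres_{\kappa}(\cg)$ is a small dense subcategory of $\cg$, the Lemme de comparaison of SGA4 (Expos\'e III, Thm 4.1) yields $\Sh(\Pres_{\kappa}(\cg), \tau) \simeq \cg$, so the task reduces to showing $\tau$ is weakly $\kappa$-ary. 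The covering families in $\tau$ are precisely those families $\{A_i \to A\}_{i \in I}$ in $\Pres_{\kappa}(\cg)$ whose Yoneda images form a jointly epimorphic family in $\cg$. Given such a family, I would write the induced epimorphism $\coprod_{i \in I} h_{A_i} \twoheadrightarrow h_A$ as a $\kappa$-filtered colimit of its $\kappa$-small subcoproducts; since $h_A$ is $\kappa$-presentable in $\cg$, the identity on $h_A$ factors through one of these $\kappa$-small subcoproducts, yielding a $\kappa$-small jointly epimorphic, and hence $\tau$-covering, subfamily.

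For the direction $(2) \Rightarrow (1)$, given a weakly $\kappa$-ary site $(\CCC, \tau)$, the key observation is that the $\kappa$-small covering families form a coverage in the sense of Johnstone (D2.1.1) which generates $\tau$. Consequently, a presheaf $F \in [\CCC^{\op}, \Set]$ is a sheaf for $\tau$ if and only if for every $\kappa$-small covering family $\{f_i : C_i \to C\}$, the canonical map $F(C) \to \lim_i F(C_i)$ is an isomorphism. Following the strategy of Makkai--Par\'e (Prop 5.2.1), I would encode this condition as a $\kappa$-ary limit sketch $\cs$ with underlying category $\CCC^{\op}$, with distinguished limit cones given by the duals of the $\kappa$-small covering families and no distinguished colimit cocones. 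Then $\Sh(\CCC, \tau)$ is equivalent to the category $\Mod \cs$ of models of $\cs$, and Ad\'amek--Rosick\'y (Cor 1.52) or Johnstone (D2.3.6) guarantees that models of a $\kappa$-ary limit sketch form a locally $\kappa$-presentable category.

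The main obstacle I foresee lies in the direction $(1) \Rightarrow (2)$: extracting a $\kappa$-small generating subfamily from an arbitrary covering sieve hinges on a careful decomposition of arbitrary coproducts of representables as $\kappa$-filtered colimits of $\kappa$-small coproducts in $\Sh(\Pres_{\kappa}(\cg), \tau) \simeq \cg$. Once this decomposition is made precise, the $\kappa$-presentability of the target permits the required factorisation; however, making rigorous the passage from joint epimorphicity to a $\kappa$-small covering subfamily, while compatible with the induced topology, is the delicate point. The $(2) \Rightarrow (1)$ direction should instead be a relatively clean application of sketch theory, once the characterisation of sheaves via the weakly $\kappa$-ary coverage is in place.
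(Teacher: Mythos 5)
Your overall route coincides with the paper's: for $(1)\Rightarrow(2)$ you take the topology on $\Pres_{\kappa}(\cg)$ induced by the canonical topology and apply the Lemme de comparaison; for $(2)\Rightarrow(1)$ you use the coverage of $\kappa$-small covering families to identify sheaves with models of a $\kappa$-ary limit sketch on $\CCC^{\op}$ and invoke Ad\'amek--Rosick\'y or Johnstone. The $(2)\Rightarrow(1)$ half is correct and is essentially the paper's argument.

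The gap is exactly at the point you yourself flag as delicate, and your proposed fix does not work as stated. You write the epimorphism $e\colon \coprod_{i\in I}h_{A_i}\to h_A$ as the $\kappa$-filtered colimit of the maps $e_J\colon \coprod_{j\in J}h_{A_j}\to h_A$ over $\kappa$-small $J\subseteq I$, and then claim that $\kappa$-presentability of $h_A$ makes the identity of $h_A$ factor through some $\coprod_{j\in J}h_{A_j}$. But $h_A$ is the \emph{codomain} of $e$, not a map into the filtered colimit: for the identity to factor through any subcoproduct it would in particular have to factor through $\coprod_{i\in I}h_{A_i}$, i.e.\ the epimorphism $e$ would have to split, which fails in a general Grothendieck topos (already for a two-element open cover $U,V$ of a connected space, the epimorphism $h_U\sqcup h_V\to 1$ admits no section). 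The repair is to pass to images: the images $S_J=\mathrm{Im}(e_J)\hookrightarrow h_A$ form a $\kappa$-filtered diagram of subobjects of $h_A$ whose colimit is $\mathrm{Im}(e)=h_A$, since images commute with filtered colimits in a Grothendieck topos and $e$ is an epimorphism. Now $\kappa$-presentability applies correctly: the identity $h_A\to \colim_J S_J$ factors through some $S_J\hookrightarrow h_A$, which forces $S_J=h_A$, i.e.\ the $\kappa$-small subfamily indexed by $J$ is already jointly epimorphic and hence $\tau$-covering. With this substitution your argument for $(1)\Rightarrow(2)$ becomes a precise version of the step the paper leaves to the reader, and the proof as a whole matches the paper's.
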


\begin{rem}\label{kappasmall}
Observe that for a weakly $\kappa$-ary site $(\CCC,\tau)$ we have that the essential image of $\CCC$ via the functor $\CCC \rightarrow [\CCC^{\op}, \Set] \rightarrow \Sh(\CCC,\tau)$ is a strong generator of $\Sh(\CCC,\tau)$ formed by locally $\kappa$-presentable objects (see \cite[Thm. 1.46]{adamekborceuxlackrosicky02}). Then by \cite[Thm 7.2]{kelly82} we can conclude that $\Ind_{\kappa}(\CCC) \cong \Lex_\kappa(\hat{\CCC}) \cong \Sh(\CCC,\tau)$, where $\hat{\CCC}$ is the closure of $\CCC$ under $\kappa$-small colimits in $\Sh(\CCC,\tau)$.
\end{rem}

Quite often it is very useful from an operative point of view to have cartesian sites, that is, sites closed under finite limits. The previous proposition can be refined in this direction, paying the price of a less elegant statement. 
\begin{lem}\label{cartesian}
Given a $\kappa$-presentable topos $\cg$ there exists a $\lambda \geq \kappa$ and a \textit{cartesian} weakly $\lambda$-ary site of definition of $\cg$.
\end{lem}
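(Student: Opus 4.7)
\emph{Proof proposal.} The plan is to refine the weakly $\kappa$-ary site $(\Pres_{\kappa}(\cg), \tau_{\kappa})$ produced in the proof of \Cref{criterion} by enlarging the cardinal. The subcategory $\Pres_{\kappa}(\cg) \hookrightarrow \cg$ is in general not closed under finite limits, so one has to pass to a sufficiently large regular cardinal $\lambda \geq \kappa$ for which $\Pres_{\lambda}(\cg)$ becomes closed under the finite limits existing in the topos $\cg$.

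The key step is to guarantee the existence of such a $\lambda$. By standard accessible-category theory (see e.g.\ \cite[Thm. 2.11 and Rem. 2.19]{adamekrosicky94}, or the discussion of the sharpness relation $\kappa \triangleleft \lambda$), for any locally $\kappa$-presentable category $\cg$ there exist arbitrarily large regular cardinals $\lambda \geq \kappa$ with the property that $\Pres_{\lambda}(\cg)$ is closed in $\cg$ under every finite limit existing in $\cg$. Heuristically, this is because $\Pres_{\kappa}(\cg)$ is essentially small, so the finite limits (computed in $\cg$) of its finite diagrams form an essentially small family; each such limit has some presentability rank, and a cofinality argument together with a transfinite iteration produces a single $\lambda$ for which $\Pres_{\lambda}(\cg)$ is stable under finite limits. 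Since $\cg$ is a topos, it has all finite limits, so this closure property is meaningful.

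Having chosen such a $\lambda$, I would apply the implication $(1) \Rightarrow (2)$ of \Cref{criterion} at the cardinal $\lambda$, which is legitimate because any locally $\kappa$-presentable category is automatically locally $\lambda$-presentable for $\lambda \geq \kappa$. This yields a weakly $\lambda$-ary site $(\Pres_{\lambda}(\cg), \tau_{\lambda})$, where $\tau_{\lambda}$ is the Grothendieck topology induced by the canonical topology on $\cg$ via the dense embedding $\iota: \Pres_{\lambda}(\cg) \hookrightarrow \cg$, and $\Sh(\Pres_{\lambda}(\cg), \tau_{\lambda}) \cong \cg$. By the choice of $\lambda$, its underlying category is closed under finite limits, so this site is moreover cartesian.

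The main obstacle is the cardinal-selection step: establishing that $\Pres_{\lambda}(\cg)$ can be arranged to be closed under finite limits requires invoking a non-trivial piece of accessible-category theory (the sharp relation between regular cardinals), and this is precisely what forces us to pay the price of passing from $\kappa$ to a potentially much larger $\lambda$, exactly as the statement of the lemma warns.
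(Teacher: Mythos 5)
Your proposal is correct and follows essentially the same route as the paper: the paper also reduces the lemma to finding $\lambda \geq \kappa$ such that $\Pres_{\lambda}(\cg)$ is closed under finite limits (so that the weakly $\lambda$-ary site of \Cref{criterion} becomes cartesian), and it settles that cardinal-selection step by citing \cite[Prop 5.2]{makkai88} instead of the sharpness/uniformization results from \cite{adamekrosicky94} that you invoke, which is a difference of reference rather than of argument.
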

\begin{proof}
This is equivalent to prove that given a locally $\kappa$-presentable topos $\cg$, we can find a $\lambda \geq \kappa$ such that $\Pres_{\lambda}(\cg)$ is closed under finite limits. This follows directly from \cite[Prop 5.2]{makkai88}.

\end{proof}

\begin{rem}
Applying this result we hence have that the $\lambda$-prototopos $\Pres_{\lambda}(\cg)$ has finite limits. The intuition suggests that prototopoi with finite limits might be a much simpler object to describe, even if at the moment we did not find a clean axiomatization that does not involve pro-exactness. Unfortunately one cannot hope for any improvement of the topoi-prototopoi duality because there is a mismatch between the height of the topos and the cardinal $\lambda$ needed to saturate presentable objects with finite limits.
\end{rem}

\section{Sites and syntaxes}\label{sitesandprototopoi}
Grothendieck topoi are a powerful tool in order to discover connections between different mathematical theories, as the program ``toposes as bridges'' developed by Caramello \cite{caramello18} has extensively shown. The philosophy behind this program relies on seeing Morita equivalences of theories (i.e. equivalences of their classifying topoi) as a bridge to transport properties and results from one theory to another. In this way, properties of a certain topos can incarnate apparently unrelated properties of different representing sites, unraveling connections between mathematical theories hidden at first sight. This theory is fundamented on the fact that there is no privileged small site to represent a topos, as already brought to attention in \cite[Expos\'e iv, Rem 1.3]{SGA4-1}. Formally, this lack of a canonical small site to represent a topos translates into the following statement:
\begin{prop}\label{reflection}
The pseudofunctor $$\Sh: \Sites^{\coop} \to \Grtopoi $$ is not the left biadjoint of a bi-reflection, where $\Sites$ denotes the $2$-category of small sites with morphisms of sites and natural transformations and the pseudofunctor $\Sh$ sends each site to its category of sheaves, each morphism of sites to the induced geometric morphism and each natural transformation to the induced natural transformation between the right adjoints.
\end{prop}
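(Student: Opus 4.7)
My plan is to prove the stronger statement that $\Sh$ does not admit any right biadjoint, which a fortiori precludes it from being the left biadjoint of a bi-reflection. The key obstruction is size-theoretic: the $2$-category $\Sites$ has essentially small Hom-categories (its objects are small sites by definition, so morphisms of sites between them are functors between small categories, and natural transformations between such functors form a set), whereas the $2$-category $\Grtopoi$ does not. Supposing for contradiction that $\Sh$ has a right biadjoint $R: \Grtopoi \to \Sites^{\coop}$, one would obtain, for every small site $S$ and every Grothendieck topos $\cg$, an equivalence of categories
$$\Grtopoi(\Sh(S), \cg) \simeq \Sites^{\coop}(S, R(\cg)) \cong \Sites(R(\cg), S),$$
and since both $R(\cg)$ and $S$ are small sites, the right-hand side is essentially small.

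To derive a contradiction, I would exploit the computation recorded in \Cref{essentiallysmall}: for any small category $C$ one has $\Grtopoi(\Set, [C^{\op}, \Set]) \cong \Ind(C)$, and this category fails to be essentially small in general. Taking, for instance, $C$ to be a countably infinite discrete category, $\Ind(C)$ reduces to the proper class-sized category $[C^{\op},\Set]$; choosing $S$ to be the terminal site (the terminal category with the trivial topology, whose topos of sheaves is $\Set$) together with $\cg = [C^{\op},\Set]$ then produces a Hom-category on the left of the displayed equivalence that is not essentially small, contradicting the previous paragraph. This yields the claim.

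The main obstacle, if there is one, is purely a matter of careful size bookkeeping: making precise that ``bi-reflection'' is interpreted as a biadjunction with fully faithful right biadjoint, that the essential smallness of $\Sites(-,-)$ follows strictly from the convention adopted for $\Sites$ (small sites only), and that the specific Hom-category $\Grtopoi(\Set, [C^{\op},\Set])$ for the chosen $C$ is genuinely not essentially small. Beyond this verification, no serious technical difficulty is anticipated.
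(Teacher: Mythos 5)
Your overall strategy is the same size obstruction the paper uses (local smallness of $\Sites$ versus failure of local smallness of $\Grtopoi$, detected via $\Grtopoi(\Set,[\CCC^{\op},\Set])\cong\Ind(\CCC)$), and your variant of running it through the biadjunction isomorphism with $S$ the terminal site is fine and even gives the slightly stronger conclusion that no right biadjoint exists at all. However, your concrete witness is wrong: for $\CCC$ a countably infinite \emph{discrete} category, $\Ind(\CCC)$ is emphatically not $[\CCC^{\op},\Set]$. Since a filtered category is connected and any two of its objects admit a cocone, every filtered diagram in a discrete category is constant, so $\Ind(\CCC)\simeq\CCC$, which is small. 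Equivalently, by Diaconescu the points of $[\CCC^{\op},\Set]=\Set^{\CCC}$ are the flat functors $\CCC\to\Set$, and flatness forces the category of elements to be cofiltered, which for discrete $\CCC$ leaves only the representables $\delta_c$; the resulting category of points is (equivalent to) the discrete category $\CCC$ itself. So with your choice of $\CCC$ the hom-category $\Grtopoi(\Set,[\CCC^{\op},\Set])$ \emph{is} essentially small and no contradiction arises; the identification of $\Ind(\CCC)$ with the full presheaf category is the step that fails.

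The repair is immediate and brings you back exactly to the paper's argument (\Cref{essentiallysmall}): choose $\CCC$ so that $\Ind(\CCC)$ genuinely has a proper class of isomorphism classes, e.g.\ a skeleton of the category of finite sets, for which $\Ind(\CCC)\simeq\Set$ (this is the classifying topos of objects, whose category of points is $\Set$), or finitely presented groups, giving the category of all groups. With such a $\CCC$ your displayed equivalence
\[
\Grtopoi(\Sh(S),\cg)\simeq\Sites^{\coop}(S,R(\cg))
\]
with $S$ the terminal site and $\cg=[\CCC^{\op},\Set]$ does yield the desired contradiction with essential smallness of the right-hand side, and the rest of your bookkeeping (smallness of hom-categories in $\Sites$, $\Sh(S)\simeq\Set$) is unproblematic.
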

\begin{proof} 
As already pointed out in \Cref{essentiallysmall}, the $2$-category $\Grtopoi$ is not locally small. Indeed, for any small category $\CCC$ we have that $\Grtopoi(\Set, [\CCC,\Set]) \cong \Ind(\CCC)$. Therefore, $\Grtopoi$ cannot be bi-reflective in the $2$-category $\Sites^{\coop}$ which is locally small.
\end{proof}
\begin{rem}
	A detailed description of the pseudofunctor $\Sh$ can be found for example in \cite[\S4]{ramosgonzalez18} for the setup enriched over categories of modules. This description works as well for the classical non-enriched setup.
\end{rem}

Nevertheless, for practical reasons, one would still want to have some sort of functorial ``dictionary'' to translate problems in $\Grtopoi$ to problems in $\Sites$, where one can work much more comfortably, and be able to translate the results back to $\Grtopoi$. 

To some extent, one approach to achieve this goal is by using the biequivalence
$$\Sites[\text{LC}^{-1}] \to \Grtopoi^{\coop}$$
from \cite{ramosgonzalez18} between $\Grtopoi^{\coop}$ and the bicategory of fractions in the sense of \cite{pronk96} of the 2-category $\Sites$ along a suitable class of $1$-morphisms called LC morphisms. This result allows to translate properties in $\Sites$ which are compatible with LC morphisms to properties in $\Grtopoi$. This was for example the approach in \cite{lowenramosgonzalezshoikhet,ramosgonzalez18} in order to define a monoidal structure in $\Grtopoi$ by introducing an LC-compatible monoidal structure in $\Sites$. 

In this section we provide another approach to this goal. More concretely, we relate the prototopoi presentations of Grothendieck topoi obtained from \Cref{ourduality} with the site presentations in order to show that though a bireflection of $\Grtopoi$ in $\Sites$ does not exist, it actually does exist in a certain ``stratified way'', more precisely, we show that there is a bireflection 
$$\Grtopoi_{\kappa} \hookrightarrow \Sites_{\kappa}^{\coop}$$
between the full 2-subcategory $\Grtopoi_{\kappa}$ of $\Grtopoi$ given by the $\kappa$-presentable Grothendieck topoi and the conjugate-opposite of the full 2-subcategory $\Sites_{\kappa}$ of $\Sites$ given by the weakly $\kappa$-ary sites. 

\begin{rem}\label{sizes}
It is natural to object that it should be possible to bypass the size issues in \Cref{reflection}. The price one then needs to pay is that of allowing big sites (that yet have to admit a small topologically generating set). Then, as observed in \cite[Expos\'e iv, Rem 1.3]{SGA4-1}, there is a canonical choice of representing (big) site for a given Grothendieck topos, namely itself endowed with the canonical topology. This escapes our initial motivations, especially if we want to exploit the syntax-semantic aspect of the theory and think about sites as syntactic presentations of Grothendieck topoi.
\end{rem}

\subsection{Previous topological considerations}\label{topologicalconsiderations}
Given a $\kappa$-prototopos $\AAA$, we have that $\Ind_{\kappa}(\AAA)$ is a locally $\kappa$-presentable Grothendieck topos and $\AAA$ is a full subcategory therein which is in addition a strong generator. Then, as a consequence of Giraud's theorem (see \cite[Expos\'e iv, Cor 1.2.1]{SGA4-1}), there is a topology $\tau$ on $\AAA$ such that $\Ind_{\kappa}(\AAA) \cong \text{Sh}(\AAA, \tau)$. In particular, the localization 
\[L_{\AAA}: [\AAA^{\op},\Set] \rightleftarrows  \Ind_{\kappa}(\AAA): i_{\AAA}\] 
is a geometric functor, i.e. $L_{\AAA}$ preserves finite limits.

We can actually describe $\tau$ precisely, as it is the topology induced in $\AAA$ by the canonical topology in $\Ind_{\kappa}(\AAA)$ through the natural embedding $\iota_{\AAA}: \AAA \hookrightarrow \Ind_{\kappa}(\AAA)$. More precisely, a family of maps $\{A_i \to A \}$ is a cover of $A$ in the topology $\tau$ if and only if the induced family $\{\iota(A_i) \to \iota(A) \}$ is a jointly epimorphic family in $\Ind_{\kappa}(\AAA)$, or equivalently $\coprod_i \iota(A_i) \to \iota(A)$ is an epimorphism in $\Ind_{\kappa}(\AAA)$. We shall denote this topology from now on as $\tau_{\kappa}^{\AAA}$. Observe that $\tau_{\kappa}^{\AAA}$ is weakly $\kappa$-ary, as already pointed out in the proof of \Cref{criterion} above. 

These observations allow us to provide the following topological interpretation of the notion of $\Ind_{\kappa}$-flat morphism between prototopoi. 
\begin{prop} \label{cover flat}
	The $1$-cells $f \in \Prototopoi_{\kappa}(\AAA,\BBB)$ are precisely the morphisms of sites in the sense of \cite[Expos\'e iv, \S4.9.1]{SGA4-1} between the sites $f: (\AAA, \tau_\kappa^{\AAA}) \to (\BBB, \tau_\kappa^{\BBB})$, i.e. the continuous functors $f: (\AAA, \tau_\kappa^{\AAA}) \to (\BBB, \tau_\kappa^{\BBB})$ inducing a geometric morphism between the sheaf categories, or equivalently, the covering-flat cover-preserving functors $f: (\AAA, \tau_\kappa^{\AAA}) \to (\BBB, \tau_\kappa^{\BBB})$. 
	\begin{proof}
		Assume first that $f:\AAA \rightarrow \BBB$ is a morphism of $\kappa$-prototopoi, i.e. it preserves $\kappa$-small colimits and it is $\Ind_{\kappa}$-flat. Taking into account that $\Ind_{\kappa}(\AAA)$ is a topos and hence regular, the fact that $f$ is cover-preserving follows immediatly from the definition of $S(f): \Ind_{\kappa}(\AAA) \rightarrow \Ind_{\kappa}(\BBB)$ and the fact that it preserves colimits. From \cite[Prop 4.16]{shulman12} we know that $f$ is covering-flat if and only if $L_{\BBB} \Lan_f: [\AAA^{\op},\Set] \rightarrow \Ind_{\kappa}(\BBB)$ preserves finite limits. But, as seen in (\ref{Sfpresheaves}) above, we have that $L_{\BBB} \Lan_f \cong S(f) L_{\AAA}$ and $S(f)$ preserves finite limits because $f$ is $\Ind_{\kappa}$-flat and $L_{\AAA}$ preserves small limits because $L_{\AAA} \dashv i_{\AAA}$ is a geometric functor, which concludes the argument. 
		
		If we now assume that $f: \AAA \rightarrow \BBB$ is covering-flat and cover-preserving, we have as a direct consequence of \cite[Prop 4.16]{shulman12} that $S(f)$ is left exact, and hence $f$ is $\Ind_{\kappa}$-flat. The fact that $f$ is $\kappa$-small colimit preserving follows immediately from the fact that both corestricitions of the Yoneda embedding $\AAA \hookrightarrow \Ind_{\kappa}(\AAA)$ and $\BBB \hookrightarrow \Ind_{\kappa}(\BBB)$ preserve $\kappa$-small colimits, together with the fact that $S(f)$ preserves all small colimits.  
	\end{proof}
\end{prop}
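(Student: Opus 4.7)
The plan is to prove the biconditional by splitting into the two directions and reducing to a characterization of morphisms of sites due to Shulman, namely that a functor between sites is a morphism of sites if and only if it is covering-flat and cover-preserving (Shulman, Prop.~4.16), where being covering-flat is equivalent to $L_{\BBB} \circ \Lan_f$ being left exact. Throughout, I will exploit the two structural facts already established above: the topologies $\tau_\kappa^{\AAA}$ and $\tau_\kappa^{\BBB}$ are subcanonical with the Yoneda corestrictions $\iota_{\AAA}: \AAA \hookrightarrow \Ind_\kappa(\AAA)$ and $\iota_{\BBB}: \BBB \hookrightarrow \Ind_\kappa(\BBB)$ being dense $\kappa$-small colimit preserving full subcategories, the associated sheaf functors $L_{\AAA}, L_{\BBB}$ are left exact (being left adjoints of geometric inclusions), and the key identity $S(f) \circ L_{\AAA} \cong L_{\BBB} \circ \Lan_f$ established in the proof of Proposition~4.2.

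For the forward direction, I would assume $f \in \Prototopoi_\kappa(\AAA, \BBB)$, so $f$ preserves $\kappa$-small colimits and is $\Ind_\kappa$-flat by hypothesis. To show $f$ is cover-preserving, I would unfold the definition of $\tau_\kappa^{\AAA}$: a covering family $\{A_i \to A\}$ corresponds to the morphism $\coprod_i \iota_{\AAA}(A_i) \to \iota_{\AAA}(A)$ being an epimorphism in $\Ind_\kappa(\AAA)$. Applying the left adjoint $S(f)$ (which preserves colimits and epimorphisms) and using that $S(f) \circ \iota_{\AAA} \cong \iota_{\BBB} \circ f$, I would obtain exactly the cover condition in $\tau_\kappa^{\BBB}$. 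For covering-flatness, I would invoke Shulman's characterization and check that $L_{\BBB} \circ \Lan_f$ preserves finite limits; via the identity $L_{\BBB} \circ \Lan_f \cong S(f) \circ L_{\AAA}$, this decomposes into $L_{\AAA}$ being left exact (by geometricity) and $S(f)$ being left exact (by $\Ind_\kappa$-flatness).

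For the reverse direction, I would assume $f$ is covering-flat and cover-preserving and extract from Shulman's criterion that $L_{\BBB} \circ \Lan_f \cong S(f) \circ L_{\AAA}$ is left exact. Since the reflection $L_{\AAA}$ restricts to an equivalence $L_{\AAA} \circ y_{\AAA} \simeq \iota_{\AAA}$ and left exactness can be detected on representables (any finite limit in $\Ind_\kappa(\AAA)$ arises from a finite limit in $[\AAA^{\op}, \Set]$ via $L_{\AAA}$), this would yield left exactness of $S(f)$, hence $\Ind_\kappa$-flatness of $f$. To show $f$ preserves $\kappa$-small colimits, I would use the identity $\iota_{\BBB} \circ f \cong S(f) \circ \iota_{\AAA}$: the right-hand side preserves $\kappa$-small colimits (since $\iota_{\AAA}$ does and $S(f)$ is a left adjoint), and since $\iota_{\BBB}$ is fully faithful and reflects $\kappa$-small colimits, we conclude that $f$ preserves them.

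The main obstacle I expect is ensuring that the composition identity $S(f) \circ L_{\AAA} \cong L_{\BBB} \circ \Lan_f$ is actually usable in both directions; in the forward direction it is a clean substitution, but in the reverse direction one must be careful that left exactness of $S(f) \circ L_{\AAA}$ genuinely implies left exactness of $S(f)$. This ultimately rests on the fact that $L_{\AAA}$ is essentially surjective and left exact onto $\Ind_\kappa(\AAA)$, so every finite diagram in $\Ind_\kappa(\AAA)$ lifts to a diagram in $[\AAA^{\op}, \Set]$ whose limit is preserved, making the implication routine once spelled out. The rest of the argument is mostly bookkeeping with adjunctions and the structure of the subcanonical topology $\tau_\kappa^{\AAA}$.
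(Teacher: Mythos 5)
Your proposal is correct and follows essentially the same route as the paper's proof: Shulman's Prop.\ 4.16 together with the identity $S(f)\,L_{\AAA}\cong L_{\BBB}\,\Lan_f$ for covering-flatness, preservation of (jointly) epimorphic families by the colimit-preserving $S(f)$ plus $S(f)\,\iota_{\AAA}\cong\iota_{\BBB}\,f$ for cover-preservation, and reflection of $\kappa$-small colimits along the fully faithful $\iota_{\BBB}$ for the converse. The only cosmetic difference is in the reverse direction, where you deduce left exactness of $S(f)$ from that of $S(f)\,L_{\AAA}$ by lifting finite diagrams along the left exact, essentially surjective reflector $L_{\AAA}$ (the canonical lift being given by $i_{\AAA}$, so the phrase ``detected on representables'' is not really the point), while the paper reads it off from $S(f)\cong L_{\BBB}\,\Lan_f\,i_{\AAA}$ with $i_{\AAA}$ limit-preserving --- the same argument in substance.
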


\begin{rem}
	In the literature a weaker notion of morphism of sites than that from \cite[Expos\'e iv, \S4.9.1]{SGA4-1} is often considered. Namely, a morphism of sites can be often found defined as a cover-preserving functor which is also flat (or in other terminology, \emph{representably-flat}, which is frequently used in this context in order to emphasize the diference with covering-flatness). The notion we work with is slightly stronger that this, as explained for example in \cite{shulman12}. However, one can easily see that in the case above both notions are equivalent because $\tau_\kappa^{\BBB}$ is a subcanonical topology on $\BBB$.
\end{rem}

\begin{rem}
Observe that when we restrict to cartesian $\lambda$-protopoi, a $1$-cell of prototopoi is precisely a functor preserving finite limits and $\lambda$-small colimits. In \Cref{cartesian} above we showed that one can always assume to be in this case in the daily practice (by considering $\lambda$ a big enough regular cardinal).
\end{rem}

\subsection{$\Grtopoi_{\kappa}$ is bireflective in $\Sites_{\kappa}$} \label{bireflectiontopoisites}
Let $\kappa$ be a regular cardinal. As we already pointed out above, we denote by $\Sites_{k}$ the full sub-$2$-category of the $2$-category of $\Sites$ with objects given by the weakly $\kappa$-ary small sites. Namely, $\Sites_{\kappa}$ is defined as follows:
\begin{itemize}
	[leftmargin=+.6in]
\item[\textbf{$0$-cells}] are weakly $\kappa$-ary small sites $(\CCC, \tau)$; 
\item[\textbf{$1$-cells}] are morphisms of sites (as considered in \Cref{cover flat} above);
\item[\textbf{$2$-cells}] are natural transformations.
\end{itemize}
We are going to show that it is possible to fit the topoi-prototopoi duality (\Cref{1}) in the following diagram of pseudofunctors
\[
\begin{tikzcd}
\Prototopoi^{\coop}_{\kappa} \ar[bend right=11, ddr, "\Ind_{\kappa}"' description] \ar[bend left=11, rr, "\mathrm{T}_{\kappa}" description]  & & \Sites^{\coop}_{\kappa} \ar[bend left=14, ddl, "\mathrm{S}_{\kappa}" description] \\
& & \\
& \Grtopoi_{\kappa}  \ar[bend right=14, uul, "\Pres_{\kappa}" description]  & \\
\end{tikzcd}
\]
where we still need to describe the pseudofunctors $\mathrm{S}_{\kappa}$ and $\text{T}_{\kappa}$.

Let's start with the definition of $\mathrm{S}_{\kappa}$. Recall that we denote by $\Sites$ the $2$-category of sites with morphisms of sites and natural transformations between them and by $\Grtopoi$ the $2$-category of Grothendieck topoi with geometric morphisms and natural transformations between their right adjoints. We have the pseudofunctor $\Sh: \text{Site}^{\coop} \to \Grtopoi$ from \Cref{reflection} defined at the level of objects by taking the category of sheaves. The pseudofunctor $\mathrm{S}_{\kappa}$ is constructed as the restriction to $\text{Site}_\kappa^{\coop}$ and corestriction to $\Grtopoi_{\kappa}$ of the pseudofunctor $\Sh$. More precisely, $\mathrm{S}_{\kappa}$ is defined as follows:
\begin{itemize}
	[leftmargin=+.6in]
\item [\textbf{$0$-cells}] it maps a weakly $\kappa$-ary site $(\CCC, \tau)$ to $\Sh(C,\tau)$, which is $\kappa$-presentable Grothen\-dieck topos as a direct consequence of \Cref{criterion}.
\item[\textbf{$1$-cells}]  it maps a morphism of sites $f: (\CCC,\tau_{\CCC}) \to (\DDD,\tau_{\DDD})$ in $\text{Site}_{\kappa}$ to the geometric morphism $f^s : \Sh(\CCC,\tau_{\CCC}) \rightleftarrows \Sh(\DDD,\tau_{\DDD}): f_s$ (we follow the notations from \cite[Expos\'e iii, \S1]{SGA4-1}). One just needs to verify that $f_s$ is $\kappa$-accessible. Observe that, because of \Cref{kappasmall}, we have essential image of the functor 
$$\alpha_{\CCC}:\CCC \xrightarrow{y_{\CCC}} [\CCC^{\op},\Set] \xrightarrow{L_{\CCC}} \Sh(\CCC,\tau_{\CCC})$$ 
(resp. $\alpha_{\DDD}: \DDD \rightarrow \Sh(\DDD,\tau_{\DDD})$) is a strong generator of $\Sh(\CCC,\tau_{\CCC})$ (resp. $\Sh(\DDD,\tau_{\DDD})$) formed by $\kappa$-presentable objects. In addition, we have that the diagram
\[
\begin{tikzcd}
\CCC \arrow[d] \arrow[d, "\alpha_{\CCC}"] \arrow[r, "f"] & \DDD \arrow[d, "\alpha_{\DDD}"] \\
{\Sh(\CCC,\tau_{\CCC})} \arrow[r, "f^s"] & {\Sh(\DDD,\tau_{\DDD})}
\end{tikzcd}
\]
is commutative up to isomorphism by \cite[Expos\'e iii, Prop 1.2]{SGA4-1} from which, as $f^s$ preserves colimits, one deduces that $f^s$ preserves $\kappa$-presentable objects. This proves that $f_s$ is $\kappa$-accessible.
\item[\textbf{$2$-cells}] it maps every $\alpha: f \Rightarrow g : (\CCC,\tau_{\CCC}) \to (\DDD,\tau_{\DDD})$ to the natural transformation  $\alpha_s: g_s \Rightarrow f_s$ defined by $$(\alpha_s)_F(C) = F(\alpha_C) : g_s(F)(C) = F(g(C)) \to F(f(C)) = f_s(F)(C)$$ for all $C \in \CCC$ and all $F \in \Sh(\DDD,\tau_{\DDD})$. 
\end{itemize}
Let's now define the $2$-functor $\mathrm{T}_{\kappa}$ as follows:
\begin{itemize}
	[leftmargin=+.6in]
\item [\textbf{$0$-cells}]  it maps a $\kappa$-prototopoi $\CCC$ to $(\CCC, \tau_{\kappa}^{\CCC})$, where $\tau_\kappa^{\CCC}$ is defined as in \S\ref{topologicalconsiderations};
\item[\textbf{$1$-cells}]  it is the identity on $1$-cells. This is indeed well-defined as a direct consequence of \Cref{cover flat} above.
\item[\textbf{$2$-cells}] it is the identity on $2$-cells. 
\end{itemize}

\begin{thm}\label{semantic}
The $2$-category $\Grtopoi_{\kappa}$ is bi-reflective in $\Sites_{\kappa}^{\coop}$ via the following biadjunction,
$$\mathrm{S}_{\kappa}:  \Sites_{k}^{\coop} \rightleftarrows  \Grtopoi_{\kappa}:  \mathparagraph_{\kappa}, $$
where $\mathparagraph_{\kappa}$ is given by the composition $\mathrm{T}_{\kappa} \Pres_{\kappa}$.
\end{thm}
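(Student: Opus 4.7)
The approach is to establish the biadjunction $\mathrm{S}_{\kappa} \dashv \mathparagraph_{\kappa}$ directly, by exhibiting a pseudo-natural equivalence of hom-categories
\[
\Grtopoi_{\kappa}(\mathrm{S}_{\kappa}(\CCC,\tau), \cg) \;\simeq\; \Sites_{\kappa}^{\coop}((\CCC,\tau),\mathparagraph_{\kappa}(\cg))
\]
for any weakly $\kappa$-ary site $(\CCC,\tau)$ and any $\cg \in \Grtopoi_{\kappa}$, and separately verifying that the induced counit $\mathrm{S}_{\kappa}\mathparagraph_{\kappa} \Rightarrow \id_{\Grtopoi_{\kappa}}$ is a pseudo-natural equivalence, which is exactly the bi-reflection condition.

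The counit component at $\cg$ is the canonical map $\Sh(\Pres_{\kappa}(\cg),\tau_{\kappa}^{\Pres_{\kappa}(\cg)}) \to \cg$, and by the very construction of $\tau_{\kappa}^{\Pres_{\kappa}(\cg)}$ as the topology induced on $\Pres_{\kappa}(\cg)$ by the canonical topology on $\cg$ (see \S\ref{topologicalconsiderations} and \Cref{kappasmall}), this map is canonically identified with the equivalence $\Ind_{\kappa}(\Pres_{\kappa}(\cg)) \to \cg$ coming from the topoi--prototopoi duality \Cref{1}. For the hom-equivalence I would chain three ingredients: (i) the universal property of the topos of sheaves from \cite[Expos\'e iv]{SGA4-1}, identifying $\Grtopoi(\Sh(\CCC,\tau),\cg)$ with the category of morphisms of sites $(\CCC,\tau) \to (\cg,\tau_{\mathrm{can}})$; (ii) the characterization from \Cref{kappasmall} of $\Pres_{\kappa}(\Sh(\CCC,\tau))$ as the closure under $\kappa$-small colimits of the image of $\CCC$, which together with the standard fact that a right adjoint is $\kappa$-accessible iff its left adjoint preserves $\kappa$-presentables implies that the geometric morphism lies in $\Grtopoi_{\kappa}$ iff its underlying morphism of sites factors through $\Pres_{\kappa}(\cg) \hookrightarrow \cg$; and (iii) the observation, in the spirit of \Cref{cover flat}, that $\tau_{\kappa}^{\Pres_{\kappa}(\cg)}$ is by construction the topology induced from $\tau_{\mathrm{can}}$, so such factorizations correspond bijectively to morphisms of weakly $\kappa$-ary sites $(\CCC,\tau) \to (\Pres_{\kappa}(\cg),\tau_{\kappa}^{\Pres_{\kappa}(\cg)})$ in $\Sites_{\kappa}$.

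The main obstacle will be the coherent handling of the 2-cells and of the $\coop$ conventions. A 2-cell in $\Grtopoi_{\kappa}$ is a natural transformation between right adjoints of parallel geometric morphisms, while a 2-cell in $\Sites_{\kappa}$ is a natural transformation between the underlying forward site morphisms, and the $\coop$ reverses the latter once more. Making the hom-equivalence pseudo-natural therefore requires matching 2-cells on both sides via mates under adjunctions and carefully tracking the $\coop$ twist; the analogous verification carried out for the prototopos duality in \Cref{1} provides the template. Once this is settled, pseudo-naturality in $(\CCC,\tau)$ and $\cg$ follows from the pseudofunctoriality of $\Sh$, $\Pres_{\kappa}$ and of the induced-topology construction $\tau_{\kappa}^{(-)}$, and the bi-reflection is then obtained by combining the hom-equivalence with the invertibility of the counit established above.
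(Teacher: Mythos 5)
Your proposal is correct in substance but organizes the argument differently from the paper. The paper proves the biadjunction by exhibiting the unit and counit explicitly and checking the triangle identities: the counit is obtained, exactly as you say, from $\mathrm{S}_{\kappa}\,\mathrm{T}_{\kappa}\cong\Ind_{\kappa}$ and the duality of \Cref{1}; the unit is the concrete morphism of sites $s_{\CCC}\colon(\CCC,\tau)\to(\Pres_{\kappa}\Sh(\CCC,\tau),\tau_{\kappa})$, $C\mapsto L_{\CCC}y_{\CCC}(C)$, whose continuity and special cocontinuity are checked via \cite[Expos\'e iii, Prop 1.2]{SGA4-1} and the comparison lemma, and whose pseudonaturality is verified by hand; the triangle identities then turn out to hold strictly, and bireflectivity follows since the counit is an equivalence. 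You instead establish the biadjunction through a hom-category equivalence built from Diaconescu's theorem (the universal property of sheaf topoi), the identification of $\Pres_{\kappa}\Sh(\CCC,\tau)$ from \Cref{kappasmall}, the fact that $\kappa$-accessibility of the direct image is equivalent to the inverse image preserving $\kappa$-presentables (\cite[Prop 2.23]{adamekrosicky94}), and the induced-topology description of \S\ref{topologicalconsiderations} and \Cref{cover flat}; in effect you prove a $\kappa$-refined Diaconescu theorem, of which the paper's unit is the image of the identity. What your route buys is a more conceptual packaging (and it makes the factorization-through-presentables criterion explicit); what the paper's route buys is the explicit unit and the observation that the triangle identities hold strictly, without invoking the classifying property of sheaf topoi. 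One bookkeeping point to fix when writing it up: with the paper's convention that $1$-cells of $\Grtopoi$ point in the direction of the right adjoint, Diaconescu identifies morphisms of sites $(\CCC,\tau)\to(\cg,\tau_{\mathrm{can}})$ with $\Grtopoi(\cg,\Sh(\CCC,\tau))$, not with $\Grtopoi(\Sh(\CCC,\tau),\cg)$; once the $\coop$ in $\Sites_{\kappa}^{\coop}$ is unwound, the equivalence your ingredients actually produce is $\Grtopoi_{\kappa}(\cg,\mathrm{S}_{\kappa}(\CCC,\tau))\simeq\Sites_{\kappa}((\CCC,\tau),\mathparagraph_{\kappa}\cg)$, which is the correct hom-form of the stated biadjunction, so this is a matter of straightening variance labels (the delicate point you yourself flagged) rather than a gap.
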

\begin{proof}
 It is easy to check that $\mathrm{S}_{\kappa}\, \mathparagraph_{\kappa} \cong \Id_{\Grtopoi_{\kappa}}$. Indeed, by the definitions above it follows that $\mathrm{S}_{\kappa}  \, \mathrm{T}_{\kappa} \cong \Ind_{\kappa}$, and thus $\mathrm{S}_{\kappa}  \, \mathrm{T}_{\kappa} \, \Pres_{\kappa} \cong \Ind_{\kappa} \, \Pres_{\kappa} \cong \Id_{\Grtopoi_{\kappa}}$, where the last pseudonatural equivalence comes from \Cref{1}. This gives us the counit $\epsilon: \mathrm{S}_{\kappa} \, \mathparagraph_{\kappa} \Rightarrow \Id_{\Grtopoi_{\kappa}}$ of the biadjunction. 

We proceed now to construct the unit $\eta: \Id_{\Sites_{\kappa}} \Rightarrow \mathparagraph_{\kappa} \, \mathrm{S}_{\kappa}$.
Let $(\CCC,\tau)$ be weakly $\kappa$-ary site. Then we have by \Cref{criterion} that $\Sh(\CCC,\tau)$ is $\kappa$-presentable. Let's denote by $\DDD = \Pres_{\kappa}(\Sh(\CCC,\tau)) \subseteq \Sh(\CCC,\tau)$. Then, by \Cref{kappasmall}, we know that $L_{\CCC} \, y (C) \in \DDD$ for all $C \in \CCC$. We can hence consider the following functor:
\begin{equation}
	s_{\CCC}: \CCC \to \DDD: C \mapsto L_{\CCC} \, y_{\CCC} (C).
\end{equation}
We show that this functor is a morphism of sites between the sites $(\CCC,\tau)$ and $\mathparagraph_{\kappa} \, \mathrm{S}_{\kappa} (\CCC, \tau_{\CCC}) = (\DDD,\tau_{\DDD})$, and hence we can define 
\begin{equation}
\eta_{(\CCC,\tau_{\CCC})} = s_{\CCC}:(\CCC,\tau_{\CCC}) \to \mathparagraph_{\kappa} \, \mathrm{S}_{\kappa} (\CCC, \tau_{\CCC}).
\end{equation}
Indeed, we have the following commutative diagram
\begin{equation}\label{siteandpresentables}
\begin{tikzcd}[column sep=10ex]
\CCC \ar[r, "s_{\CCC}"] \ar[d, "y_{\CCC}"']  & \DDD \ar[d, "y_{\DDD}"] \ar[hook, bend left = 50, dd] \\
\left[ \CCC^{\op},\Set\right]  \ar[r,"\Lan_{s_{\CCC}}"] \ar[d,"L_{\CCC}"']  & \left[ \DDD^{\op},\Set\right]  \ar[d,"L_{\DDD}"] \\
\Sh(\CCC,\tau_{\CCC})  \arrow{r}{(\epsilon_{\Sh(\CCC,\tau_{\CCC})})^{-1}}[below]{\cong}  & \Sh(\DDD,\tau_{\DDD}),\\
\end{tikzcd}
\end{equation}
from which one concludes that $s_{\CCC}$ is a continuous morphism (using the characterization of continuous morphisms provided by \cite[Expos\'e iii, Prop 1.2]{SGA4-1}) and it induces an equivalence between the sheaf categories, hence it is a morphism of sites. We can even say further, $s_{\CCC}$ is a \emph{special cocontinuous} morphism in the sense of \cite[Tag 03CG]{stacksproject}, as a consequence of a generalization of the Comparison Lemma \cite{kockmoerdijk91} and the fact that $(\DDD,\tau_{\DDD})$ is subcanonical. This argument can also be found in \cite[Tag 03A1]{stacksproject}.

Given now a morphism of sites $f:(\AAA,\tau_{\AAA})\to (\BBB,\tau_{\BBB})$, we have the following diagram
\[
\begin{tikzcd}
(\AAA,\tau_{\AAA}) \ar[rrr, "f"] \ar[dd, "\alpha_{\AAA}"'] \ar[dr,"\eta_{(\AAA,\tau_{\AAA})}"] &&&(\BBB,\tau_{\BBB}) \ar[dd, "\alpha_{\BBB}"] \ar[dl,"\eta_{(\BBB,\tau_{\BBB})}"']\\
&\mathparagraph_{\kappa} \, \mathrm{S}_{\kappa} (\AAA, \tau_{\AAA})  \ar[r,"\mathparagraph_{\kappa}\, \mathrm{S}_{\kappa}(f)"] \ar[dl, hook]  &\mathparagraph_{\kappa} \, \mathrm{S}_{\kappa}(\BBB,\tau_{\BBB}) \ar[dr, hook]&\\
\Sh(\AAA,\tau_{\AAA})  \ar[rrr, "f^s"]  &&& \Sh(\BBB,\tau_{\BBB}),
\end{tikzcd}
\]
where the outter square is commutative up to a canonical isomorphism 
\begin{equation}\label{naturalisom}
\Phi_{f}: \alpha_{\BBB} \, f \overset{\sim}{\Rightarrow} f^s \, \sigma_{\AAA},
\end{equation}
as a consequence of \cite[Expos\'e iii, Prop 1.2]{SGA4-1}. Observe, in addition, that
$$\epsilon_{\Sh(\AAA,\tau_{\AAA})}  \, \alpha_{\mathparagraph_{\kappa}(\mathrm{S}_{\kappa}(\AAA,\tau_{\AAA}))}: \mathparagraph_{\kappa} \, \mathrm{S}_{\kappa}(\AAA,\tau_{\AAA}) \hookrightarrow  \mathrm{S}_{\kappa}(\AAA,\tau_{\AAA})$$
coincides with the canonical embedding of the full subcategory $\Pres_{\kappa}(\Sh(\AAA,\tau_{\AAA})) = \mathparagraph_{\kappa} \, \mathrm{S}_{\kappa}(\AAA,\tau_{\AAA})$ in $\Sh(\AAA,\tau_{\AAA})$ that appears in the diagram, and hence the left triangle of the diagram is commutative because of the commutativity of diagram (\ref{siteandpresentables}). Analogously, we see that the right triangle of the diagram is commutative as well. This observation also shows that the lower square is also commutative by definition. All of these, together with the fact that $\mathparagraph_{\kappa} \, \mathrm{S}_{\kappa}(\BBB,\tau_{\BBB}) \hookrightarrow  \Sh(\BBB,\tau_{\BBB})$ is fully faithful, provides us that there exists a canonical isomorphism 
\begin{equation}
\eta(f): \eta_{(\BBB,\tau_{\BBB})} \, f \overset{\sim}{\Rightarrow} \mathparagraph_{\kappa}\, \mathrm{S}_{\kappa}(f) \, \eta_{(\AAA,\tau_{\AAA})}
\end{equation}
such that the upper square within the diagram is commutative up to $\eta(f)$, that is, we have
\[
\begin{tikzcd}
(\AAA,\tau_{\AAA}) \ar[r, "f"] \ar[d,"\eta_{(\AAA,\tau_{\AAA})}"'] &(\BBB,\tau_{\BBB})  \ar[d,"\eta_{(\BBB,\tau_{\BBB})}"] \ar[dl,shorten <=4.5ex,shorten >=4.5ex, Rightarrow,"\eta(f)"']\\
\mathparagraph_{\kappa} \, \mathrm{S}_{\kappa} (\AAA, \tau_{\AAA})  \ar[r,"\mathparagraph_{\kappa}\, \mathrm{S}_{\kappa}(f)"'] &\mathparagraph_{\kappa} \, \mathrm{S}_{\kappa}(\BBB,\tau_{\BBB}).
\end{tikzcd}
\]
From the fact that $\Phi_f$ is a canonical isomorphism, determined by the adjunction $\mathrm{S}_{\kappa}(f)=f^s \dashv f_s$ (as well as the adjunctions $L_{\AAA} \dashv i_{\AAA}$ and $L_{\BBB} \dashv i_{\BBB}$, see \cite[Expos\'e iii, Prop 1.2 \& Prop 1.3]{SGA4-1}), which is natural on $f$, one concludes that $\Phi_f$ is natural on $f$. Consequently, so is $\eta(f)$ and hence we have that $\eta$ is compatible with compositions and units. In addition, from the naturality of $\eta$ and the definitions, one can easily see that
\[
\begin{tikzcd}[column sep=7ex,row sep=8ex]
(\AAA,\tau_{\AAA}) \ar[r, "f"] \ar[d,"\eta_{(\AAA,\tau_{\AAA})}"'] &(\BBB,\tau_{\BBB})  \ar[d,"\eta_{(\BBB,\tau_{\BBB})}"] \ar[dl,shorten <=7ex,shorten >=7ex, shift right=2, Rightarrow,"\eta(f)"'] &[-6ex] \, \ar[d,phantom,"=" description]&[-6ex] (\AAA,\tau_{\AAA}) \ar[r,Rightarrow,start anchor={[yshift=1ex,xshift=6.25ex]},end anchor={[yshift=-1ex,xshift=-6.25ex]},shift right=1,"\alpha"] \ar[r, bend left=15, "f"] \ar[r, bend right=15, "g"'] \ar[d,"\eta_{(\AAA,\tau_{\AAA})}"'] &(\BBB,\tau_{\BBB})  \ar[d,"\eta_{(\BBB,\tau_{\BBB})}"] \ar[d,"\eta_{(\BBB,\tau_{\BBB})}"] \ar[d,"\eta_{(\BBB,\tau_{\BBB})}"] \ar[dl,shorten <=7ex,shorten >=7ex, Rightarrow,"\eta(g)"]\\
\mathparagraph_{\kappa} \, \Sh (\AAA, \tau_{\AAA}) \ar[r, bend left=15, "\mathparagraph_{\kappa}\, \mathrm{S}_{\kappa}(f)"] \ar[r, bend right=15, "\mathparagraph_{\kappa}\, \mathrm{S}_{\kappa}(g)"'] \ar[r,Rightarrow,start anchor={[yshift=1ex,xshift=3.45ex]},end anchor={[yshift=-1ex,xshift=-3.45ex]},shift right=5,"\mathparagraph_{\kappa}\, \mathrm{S}_{\kappa}(\alpha)"]&\mathparagraph_{\kappa} \, \Sh(\BBB,\tau_{\BBB}) &[-6ex] \, &[-6ex] \mathparagraph_{\kappa} \, \Sh (\AAA, \tau_{\AAA})  \ar[r,"\mathparagraph_{\kappa}\, \mathrm{S}_{\kappa}(g)"'] &\mathparagraph_{\kappa} \, \Sh(\BBB,\tau_{\BBB}).
\end{tikzcd}
\]
A detailed check of this naturality condition can be found in \cite[Fig (22)]{ramosgonzalez18}. Hence, we have that $\eta: \Id_{\Sites_{\kappa}} \Rightarrow \mathparagraph_{\kappa} \, \mathrm{S}_{\kappa}$ is a pseudonatural transformation, providing us the unit of the biadjunction. 

It only remains to check that the unit and the counit satisfy the triangle identities up to invertible modifications. Actually, in this case something stronger is true, namely the triangle identities hold strictly. Indeed, for every $(\CCC,\tau_{\CCC})$ in $\Sites_{\kappa}$, we have that
$$\epsilon_{\Sh(\CCC,\tau_{\CCC})} \, \mathrm{S}_{\kappa}(\eta_{(\CCC,\tau_{\CCC})}) = \epsilon_{\Sh(\CCC,\tau_{\CCC})} \, (\epsilon_{\Sh(\CCC,\tau_{\CCC})})^{-1} = \Id_{\Sh(\CCC,\tau_{\CCC})},$$
where the second equality follows directly from (\ref{siteandpresentables}). This shows that
\begin{equation*}
(\epsilon \, \mathrm{S}_{\kappa}) \, (\mathrm{S}_{\kappa} \, \eta) = \Id_{\mathrm{S}_{\kappa}}.
\end{equation*} 
On the other hand, for every $\mathcal{G} \in \Grtopoi_{\kappa}$, the following diagram
\begin{equation*}
	\begin{tikzcd}
	&\mathparagraph_{\kappa}(\mathcal{G}) \ar[dl,hook] \arrow{d}{\alpha_{\mathparagraph_{\kappa}(\mathcal{G})}} \arrow{r}{\eta_{\mathparagraph_{\kappa}(\mathcal{G})}} &\mathparagraph_{\kappa} \, \mathrm{S}_{\kappa} \, \mathparagraph_{\kappa} (\mathcal{G}) \ar[d,hook] \arrow{r}{\mathparagraph_{\kappa}(\epsilon_{\mathcal{G}})}& \mathparagraph_{\kappa}(\mathcal{G}) \ar[d,hook]\\
	\mathcal{G} \arrow{r}[below]{(\epsilon_{\mathcal{G}})^{-1}}&\mathrm{S}_{\kappa} \, \mathparagraph_{\kappa}(\mathcal{G}) \ar[r,equal]& \mathrm{S}_{\kappa} \, \mathparagraph_{\kappa}(\mathcal{G}) \arrow{r}[below]{\epsilon_{\mathcal{G}}} &\mathcal{G}
	\end{tikzcd}
\end{equation*}
is commutative. Observe that the horizontal composition below is the identity of on $\mathcal{G}$, and thus we have that $\mathparagraph_{\kappa}(\epsilon_{\mathcal{G}}) \, \eta_{\mathparagraph_{\kappa}(\mathcal{G})}$ is the restriction of the identity on $\cg$ to $\kappa$-presentable objects, that is, we have that
$$\mathparagraph_{\kappa}(\epsilon_{\mathcal{G}}) \, \eta_{\mathparagraph_{\kappa}(\mathcal{G})} = \Id_{\mathparagraph_{\kappa}(\mathcal{G})}.$$
This proves that the second triangle identity
\begin{equation*}
(\mathparagraph_{\kappa} \, \epsilon) \, (\eta \, \mathparagraph_{\kappa}) = \Id_{\mathparagraph_{\kappa}}
\end{equation*} 
also holds. Hence $\mathrm{S}_{\kappa}: \Sites_{\kappa} \rightleftarrows \Grtopoi_{\kappa}: \mathparagraph_{\kappa}$ is a biadjunction, as we wanted to show.
 
Now, observe that as the counit of the biadjunction is a pseudonatural equivalence, we have that $\mathparagraph_{\kappa}$ is a bi-fully-faithful pseudofunctor, i.e. it presents $\Grtopoi_{\kappa}$ as a full sub-bicategory of $\Sites_{\kappa}$. 
\end{proof}

\begin{rem} \label{moment}
	The reader should notice that \Cref{semantic} does not lead to the definition of a bireflection of $\Grtopoi$ in $\Sites^{\coop}$ (that in any case we already know cannot exist, as proved in \Cref{reflection}). Observe that, although 
	\begin{itemize}
		\item the family $(\Grtopoi_{\kappa})_{\kappa \in \RegCard}$ is cofinal in $\Grtopoi$ (see \S\ref{cofinality}),
		\item the family $(\text{Site}_{\kappa})_{\kappa \in \RegCard}$ is cofinal in $\Sites^{\coop}$ (for each site $(\CCC,\tau_{\CCC})$ it is enough to take a regular cardinal $\kappa$ strictly bigger than the cardinality of arrows of $\CCC$) and
		\item for every $\alpha \leq \beta$ regular cardinals the diagram
		\[
		\begin{tikzcd}
		\Sites_{\alpha}^{\coop} \arrow[d, hook] \arrow[r, "S_{\alpha}"] & \Grtopoi_{\alpha} \arrow[d, hook] \\
		\Sites_{\beta}^{\coop} \arrow[r, "S_{\beta}"] & \Grtopoi_{\beta}
		\end{tikzcd}
		\] 
		is commutative,
	\end{itemize}
	  we have that, given $\alpha \leq \beta$ regular cardinals, the diagram
	  \[
	  \begin{tikzcd}
	  \Grtopoi_{\alpha} \arrow[d, hook] \arrow[r, "\mathparagraph_{\alpha}"] & \Sites_{\alpha}^{\coop} \arrow[d, hook] \\
	  \Grtopoi_{\beta} \arrow[r, "\mathparagraph_{\beta}"] & \Sites_{\beta}^{\coop}
	  \end{tikzcd}
	  \] 
	  is not commutative, which avoids the bi-reflection for each cardinal to extend to a bi-reflection of $\Grtopoi$ in $\Sites^{\coop}$.
\end{rem}

As a corollary of the previous theorem, together with \Cref{1} one gets a connection between the site presentation and the syntactic presentation of a topos.
\begin{cor}\label{syntactic}
The $2$-category $\Prototopoi_{\kappa}$ is bi-reflective in $\Sites_{\kappa}$ via the following biadjunction,
$$\wp_{\kappa}:  \Sites_{k} \rightleftarrows  \Prototopoi_{\kappa}:  \mathrm{T}_{\kappa}, $$
where $\wp_{\kappa}$ is given by the composition $\Pres_{\kappa} \, \mathrm{S}_{\kappa}$.
\end{cor}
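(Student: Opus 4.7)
The plan is to obtain the statement by composing the biequivalence of Theorem~\ref{1} with the biadjunction of Theorem~\ref{semantic}, so that almost nothing new needs to be proved: it is a bookkeeping exercise transporting $\mathrm{S}_{\kappa} \dashv \mathparagraph_{\kappa}$ across $\Ind_{\kappa} \simeq \Pres_{\kappa}^{-1}$. First I would observe that one can precompose the biadjunction $\mathrm{S}_{\kappa}: \Sites_{\kappa}^{\coop} \rightleftarrows \Grtopoi_{\kappa} : \mathparagraph_{\kappa}$ with the quasi-inverse biequivalence of Theorem~\ref{1}, obtaining a biadjunction
\[
\Pres_{\kappa}\, \mathrm{S}_{\kappa} : \Sites_{\kappa}^{\coop} \rightleftarrows \Prototopoi_{\kappa}^{\coop} : \mathparagraph_{\kappa}\, \Ind_{\kappa},
\]
since composing a biadjunction with a biequivalence on one side again yields a biadjunction. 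Passing to the conjugate-opposite on both sides (which is purely a matter of reversing the direction of $2$-cells) rewrites this as a biadjunction between $\Sites_{\kappa}$ and $\Prototopoi_{\kappa}$ with left biadjoint $\wp_{\kappa}$.

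Next I would identify the right biadjoint with $\mathrm{T}_{\kappa}$. By the very definition $\mathparagraph_{\kappa} = \mathrm{T}_{\kappa}\, \Pres_{\kappa}$, hence
\[
\mathparagraph_{\kappa}\, \Ind_{\kappa} = \mathrm{T}_{\kappa}\, \Pres_{\kappa}\, \Ind_{\kappa} \simeq \mathrm{T}_{\kappa},
\]
where the last pseudonatural equivalence is Theorem~\ref{1}. This identifies $\mathrm{T}_{\kappa}$ as the right biadjoint of $\wp_{\kappa}$.

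Finally I would verify that the resulting biadjunction is a bi-reflection. As observed already in the proof of Theorem~\ref{semantic}, one has $\mathrm{S}_{\kappa}\, \mathrm{T}_{\kappa} \simeq \Ind_{\kappa}$, and therefore
\[
\wp_{\kappa}\, \mathrm{T}_{\kappa} = \Pres_{\kappa}\, \mathrm{S}_{\kappa}\, \mathrm{T}_{\kappa} \simeq \Pres_{\kappa}\, \Ind_{\kappa} \simeq \Id_{\Prototopoi_{\kappa}},
\]
again using Theorem~\ref{1}. Hence the counit of the composite biadjunction is a pseudonatural equivalence, so $\mathrm{T}_{\kappa}$ is bi-fully-faithful and $\Prototopoi_{\kappa}$ sits as a full sub-bicategory of $\Sites_{\kappa}$. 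No genuine obstacle is expected; the only care needed is to keep track of which sides carry a $\coop$ and to check that the two conjugate-opposites cancel when the two statements are combined, leaving a biadjunction between $\Sites_{\kappa}$ and $\Prototopoi_{\kappa}$ as stated.
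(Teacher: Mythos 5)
Your overall route is exactly the paper's: \Cref{syntactic} is treated there as immediate from \Cref{semantic} together with \Cref{1}, i.e.\ one composes the bi-reflection $\mathrm{S}_{\kappa}\dashv\mathparagraph_{\kappa}$ with the biequivalence $\Ind_{\kappa}\simeq\Pres_{\kappa}^{-1}$, identifies $\mathparagraph_{\kappa}\,\Ind_{\kappa}=\mathrm{T}_{\kappa}\,\Pres_{\kappa}\,\Ind_{\kappa}\simeq\mathrm{T}_{\kappa}$, and uses $\wp_{\kappa}\,\mathrm{T}_{\kappa}=\Pres_{\kappa}\,\mathrm{S}_{\kappa}\,\mathrm{T}_{\kappa}\simeq\Pres_{\kappa}\,\Ind_{\kappa}\simeq\Id_{\Prototopoi_{\kappa}}$ to see that the counit is an equivalence, so $\mathrm{T}_{\kappa}$ is bi-fully-faithful. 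All of that is correct and is the intended argument.

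The one step whose justification is not right as written is the final dualization. The conjugate-opposite is not ``purely a matter of reversing the direction of $2$-cells'': $(-)^{\coop}$ reverses $1$-cells as well as $2$-cells, and, more to the point, applying $(-)^{\coop}$ to both bicategories of a biadjunction interchanges the left and the right biadjoint (just as $(-)^{\op}$ does for ordinary adjunctions; note $\mathcal{B}^{\coop}(FA,B)=\mathcal{B}(B,FA)^{\op}$). So from $\Pres_{\kappa}\,\mathrm{S}_{\kappa}\dashv\mathrm{T}_{\kappa}$ between $\Sites_{\kappa}^{\coop}$ and $\Prototopoi_{\kappa}^{\coop}$, purely formal dualization would hand you $\mathrm{T}_{\kappa}\dashv\wp_{\kappa}$, i.e.\ the opposite handedness to the one claimed. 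The handedness in the corollary should instead be read off from the explicit unit and counit constructed in the proof of \Cref{semantic}: the unit $\eta_{(\CCC,\tau_{\CCC})}=s_{\CCC}\colon(\CCC,\tau_{\CCC})\to\mathparagraph_{\kappa}\,\mathrm{S}_{\kappa}(\CCC,\tau_{\CCC})$ is already a morphism of sites, hence a $1$-cell of $\Sites_{\kappa}$ pointing the way needed for $\wp_{\kappa}$ to be the reflector, and the counit is a pseudonatural equivalence; transporting these data along the biequivalence of \Cref{1} directly yields the unit $(\CCC,\tau_{\CCC})\to\mathrm{T}_{\kappa}\,\wp_{\kappa}(\CCC,\tau_{\CCC})$ and the equivalence counit $\wp_{\kappa}\,\mathrm{T}_{\kappa}\simeq\Id_{\Prototopoi_{\kappa}}$, with no further $\coop$ bookkeeping. (The paper is itself loose about this variance, so your conclusion agrees with the statement; just do not argue the handedness by claiming that passing to the conjugate-opposite keeps the same functor on the left.)
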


\begin{rem}
The unit of this biadjunction $(\CCC,\tau_{\CCC}) \to  \mathrm{T}_{\kappa} \, \wp_{\kappa}(\CCC,\tau_{\CCC})$ establishes an envelope of the site $(\CCC,\tau_{\CCC})$ amongst $\kappa$-limit theories. This completion is described in \Cref{kappasmall} and coincides with the closure of $\CCC$ under $\kappa$-small colimits in $ \Sh (\CCC,\tau_{\CCC})$. 
\end{rem}

\begin{rem}
We shall finish with an observation which is very much in the spirit of \cite{caramello18} and \cite{ramosgonzalez18}. Putting together  \Cref{1} and \Cref{syntactic}, two sites in $\Sites_{\kappa}$ have the same category of sheaves if and only if $\mathrm{T}_{\kappa} \, \wp_{\kappa}$ maps them in two equivalent protopoi. Writing it down explicitly, one gets the following theorem.
\end{rem}

\begin{thm} Let $f: (\AAA,\tau_{\AAA}) \to (\BBB,\tau_{\BBB})$ be a $1$-cell in $\Sites_{\kappa}$. Denote by $\AAA_\kappa$ (resp. $\BBB_\kappa$) the closure of $(\AAA,\tau_{\AAA})$ (resp. $(\BBB,\tau_{\BBB})$) under $\kappa$-small colimits in $\Sh(\AAA,\tau_{\AAA})$ (resp. in $\Sh(\BBB,\tau_{\BBB})$) as in \Cref{kappasmall}. Then, the following are equivalent:
\begin{enumerate}[label=\rm (\arabic*)]
\item $\mathrm{S}_{\kappa} (f)$ induces an equivalence of topoi.
\item The restriction of $\mathrm{S}_{\kappa} (f)$ to $\AAA_\kappa$ and $\BBB_\kappa$ is an equivalence of categories.
\end{enumerate}
\end{thm}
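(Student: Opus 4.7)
The plan is to reduce the statement directly to the topoi-prototopoi biequivalence of \Cref{1}, after first identifying the restriction appearing in $(2)$ with the value of the reflection $\wp_{\kappa}$ on the morphism $f$.

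First I would unwind the definitions. By \Cref{kappasmall} (applied to the weakly $\kappa$-ary sites $(\AAA,\tau_{\AAA})$ and $(\BBB,\tau_{\BBB})$), one has canonical equivalences $\AAA_{\kappa} \simeq \Pres_{\kappa}(\Sh(\AAA,\tau_{\AAA})) = \mathparagraph_{\kappa} \, \mathrm{S}_{\kappa}(\AAA,\tau_{\AAA})$ and likewise for $\BBB$. Moreover, $\mathrm{S}_{\kappa}(f) = f^{s}$ is the left adjoint of a geometric morphism whose right adjoint is $\kappa$-accessible (by construction of $\mathrm{S}_{\kappa}$), so $f^{s}$ sends $\kappa$-presentable objects to $\kappa$-presentable objects. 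Hence the restriction of $\mathrm{S}_{\kappa}(f)$ to $\AAA_{\kappa}$ and $\BBB_{\kappa}$ is, up to these canonical equivalences, precisely $\wp_{\kappa}(f) = \Pres_{\kappa}\, \mathrm{S}_{\kappa}(f)$, a $1$-cell in $\Prototopoi_{\kappa}$.

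With this identification, both implications follow formally. For $(1) \Rightarrow (2)$, if $\mathrm{S}_{\kappa}(f)$ is an equivalence of topoi, then applying the pseudofunctor $\Pres_{\kappa}$ (which is part of the biequivalence $\Ind_{\kappa} \dashv\vdash \Pres_{\kappa}$ of \Cref{1}) yields an equivalence $\wp_{\kappa}(f)$ in $\Prototopoi_{\kappa}$, that is, the restriction in $(2)$ is an equivalence of categories. Conversely, for $(2) \Rightarrow (1)$, if $\wp_{\kappa}(f)$ is an equivalence of $\kappa$-prototopoi, then applying $\Ind_{\kappa}$ produces an equivalence $\Ind_{\kappa}(\wp_{\kappa}(f))$ in $\Grtopoi_{\kappa}$; but using $\Ind_{\kappa}\,\Pres_{\kappa} \simeq \Id_{\Grtopoi_{\kappa}}$ from \Cref{1} together with $\mathrm{S}_{\kappa}\,\mathrm{T}_{\kappa} \simeq \Ind_{\kappa}$ (noted in the proof of \Cref{semantic}), this equivalence is naturally isomorphic to $\mathrm{S}_{\kappa}(f)$, which is therefore itself an equivalence of topoi.

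The only genuinely non-formal step is the identification of the restricted functor with $\wp_{\kappa}(f)$; once one has carefully checked, via the commutative diagram \eqref{siteandpresentables} and the naturality witnessed by $\Phi_{f}$ from the proof of \Cref{semantic}, that the two constructions agree (and not merely that they agree on objects), the rest is an immediate invocation of the biequivalence. I expect this naturality check to be the main, though mild, obstacle, since everything else is a direct formal consequence of the results already established.
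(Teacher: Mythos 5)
Your proposal is correct and follows essentially the same route as the paper, which likewise deduces the theorem by identifying the restricted functor with $\wp_{\kappa}(f)=\Pres_{\kappa}\,\mathrm{S}_{\kappa}(f)$ (via \Cref{kappasmall} and the fact that $f^{s}$ preserves $\kappa$-presentable objects) and then invoking the biequivalence of \Cref{1} together with the bireflection of \Cref{syntactic}. The naturality check you flag is exactly the content already recorded in the proof of \Cref{semantic} (diagram (\ref{siteandpresentables}) and $\Phi_f$), so nothing further is needed.
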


\begin{rem}
One can see this last theorem as a version of the generalization of the Lemme de comparaison from \cite[Cor. 4.5]{lowen04} in the setup of weakly $\kappa$-ary sites. 
\end{rem}

\begin{rem}
It is well known that two categories have the same presheaf category if and only if their Cauchy completion (also known as Karoubi completion or idempotent completion) are equivalent (see, for example \cite[Thm. 3.6']{elkinszilber76}). Roughly speaking, in a general Morita theoretical framework, two categories $\AAA$ and $\BBB$ are Morita-equivalent (meaning that their categories of models of a certain nature are equivalent) if and only if they are equivalent after taking a certain completion of them. In our context this completion is precisely the one induced by the monad $\mathrm{T}_{\kappa} \, \wp_{\kappa}$.
\end{rem}

\section*{Acknowledgements}
The authors are very grateful to the organisers of the summer school and conference \textit{Toposes in Como} (Università degli Studi dell’Insubria - Como, June 2018) during which the initial discussions that led to this paper took place. The first author is also indebted with Ji\v{r}\'i Rosick\'{y} for pointing out some very useful references. The second author would like to thank Jens Hemelaer for very interesting discussions on topos theory and in particular for his help in fully understanding the embedding $\Ind_{\kappa}(\CCC) \rightarrow \CCC^{\op}$-$\Alg$, which appears in the proof of \Cref{extensive}. She is also grateful to Wendy Lowen for the reading of this manuscript and for her useful suggestions, as well as for pointing out reference \cite{breitsprecher70}. Finally, she would like to thank the organisers of the \textit{S\'eminaire de th\'eorie des cat\'egories} (Universit\'e catholique de Louvain, Universit\'e libre de Bruxelles and Vrije Universiteit Brussel) for giving her the opportunity to present this work during the seminar, which resulted in an improvement of the presentation of the paper. 


\bibliography{bib}
\bibliographystyle{abbrv}

\end{document}